\documentclass[11pt]{article}

\usepackage[margin=1in]{geometry}
\usepackage{amsmath,amssymb,amsthm}
\usepackage{bm}
\usepackage{graphicx}
\usepackage{tikz}
\usetikzlibrary{arrows.meta,calc}

\usepackage[sort&compress]{natbib}
\bibpunct[, ]{[}{]}{,}{n}{}{,}%

\usepackage[nameinlink,noabbrev]{cleveref}

\providecommand{\texorpdfstring}[2]{#1}

\theoremstyle{plain}
\newtheorem{theorem}{Theorem}
\newtheorem{lemma}{Lemma}
\newtheorem{proposition}{Proposition}
\newtheorem{corollary}{Corollary}
\newtheorem{assumption}{Assumption}
\theoremstyle{definition}

\newtheorem{example}{Example}
\theoremstyle{remark}
\newtheorem{remark}{Remark}

\makeatletter
\renewenvironment{proof}[1]
  {\par\addvspace{6pt plus 2pt}\noindent\hspace*{1em}{\itshape #1\enskip}\ignorespaces}
  {\par\addvspace{6pt plus 2pt}}
\makeatother
\newcommand{\Halmos}{\qed}

\DeclareMathOperator*{\argmin}{arg\,min}

\newcommand{\up}{\rule{0pt}{13pt}}
\newcommand{\down}{\rule[-5pt]{0pt}{0pt}}

\newcommand{\FIGURE}[3]{%
  \centering
  #1\par
  \caption{#2}%
  \if\relax\detokenize{#3}\relax\else
    \par\smallskip\footnotesize\textit{Note.} #3\par
  \fi}
\newcommand{\TABLE}[3]{%
  \centering
  \caption{#1}%
  \smallskip
  {\small #2}\par
  \if\relax\detokenize{#3}\relax\else
    \par\smallskip\footnotesize\textit{Note.} #3\par
  \fi}

\crefname{theorem}{Theorem}{Theorems}
\Crefname{theorem}{Theorem}{Theorems}
\crefname{lemma}{Lemma}{Lemmas}
\Crefname{lemma}{Lemma}{Lemmas}
\crefname{proposition}{Proposition}{Propositions}
\Crefname{proposition}{Proposition}{Propositions}
\crefname{corollary}{Corollary}{Corollaries}
\Crefname{corollary}{Corollary}{Corollaries}
\crefname{assumption}{Assumption}{Assumptions}
\Crefname{assumption}{Assumption}{Assumptions}
\crefname{definition}{Definition}{Definitions}
\Crefname{definition}{Definition}{Definitions}
\crefname{example}{Example}{Examples}
\Crefname{example}{Example}{Examples}
\crefname{remark}{Remark}{Remarks}
\Crefname{remark}{Remark}{Remarks}
\crefname{figure}{Figure}{Figures}
\Crefname{figure}{Figure}{Figures}
\crefname{table}{Table}{Tables}
\Crefname{table}{Table}{Tables}
\crefname{section}{Section}{Sections}
\Crefname{section}{Section}{Sections}

\newcommand{\R}{\mathbb R}

\newcommand{\bmzero}{\bm 0}

\newcommand{\UE}{\mathrm{UE}}
\newcommand{\SO}{\mathrm{SO}}
\newcommand{\PoA}{\mathrm{PoA}}
\newcommand{\Cost}{\mathrm{Cost}}
\newcommand{\Nondiff}{\mathrm{Nondiff}}

\newcommand{\intO}{\operatorname{int}}

\newcommand{\supp}{\operatorname{supp}}

\title{Regularity of the Price of Anarchy in Routing Games over the Multivariate Demand Space}
\author{Jiawen Wang\textsuperscript{a}
\and
Hu Shao\textsuperscript{b}
\and
Ting Wu\textsuperscript{a,*}}
\date{%
  \small \textsuperscript{a}\,School of Mathematics, Nanjing University, Nanjing, Jiangsu 210093, China\\[3pt]
  \small \textsuperscript{b}\,School of Mathematics, China University of Mining and Technology, Xuzhou, Jiangsu 221116, China\\[3pt]
  \small \textsuperscript{*}\,Corresponding author. Email: \texttt{tingwu@nju.edu.cn}}

\begin{document}

\maketitle

\begin{abstract}
We study the price of anarchy (PoA) in nonatomic routing games as a function
of the full vector of origin--destination demands. We show that the Beckmann
value function is continuously differentiable on the positive demand cone,
with gradient equal to the equilibrium origin--destination cost vector; hence
the PoA is continuously differentiable on the regular demand region. We prove
that the nondifferentiability set has Lebesgue measure zero and, when every
edge cost is strictly increasing or constant, is covered by countably many
codimension-one submanifolds. Under the same cost structure, all equilibrium
objects are cellwise as smooth as the edge costs. For affine costs, the
exceptional sets become finite unions of lower-dimensional polyhedral cells
linked by an exact scaling law, and the maximum of the PoA over a cell
closure is attained on the cell boundary, generalizing the one-dimensional
endpoint result. The theory is illustrated on the Sioux Falls benchmark
network.
\end{abstract}

\noindent\textbf{Keywords:} price of anarchy, Wardrop equilibrium, demand sensitivity, nondifferentiability set, Beckmann value function

\bigskip

\section{Introduction}

Nonatomic routing games provide the standard mathematical model of selfish
traffic distribution over networks. In this model, a continuum of users
travels between origin--destination (OD) pairs of a directed multigraph.
The cost of each edge is a nondecreasing function of its load.
A Wardrop
equilibrium is a flow in which, for every OD pair, only minimum-cost paths
are used (Wardrop~\cite{wardrop1952}). Wardrop equilibria admit a classical
characterization as the optimal solutions of a convex optimization program
(Beckmann, McGuire, and Winsten~\cite{beckmann1956}), and their inefficiency is usually quantified by the
\emph{price of anarchy} (PoA) (Koutsoupias and Papadimitriou~\cite{koutsoupias1999}; Papadimitriou~\cite{papadimitriou2001}), the
ratio between the total equilibrium cost and the minimum social cost. These
games are a central instance of the broader class of congestion games
(Rosenthal~\cite{rosenthal1973}; Nisan et al.~\cite{nisan2007}).

The classical theory of the PoA has focused on worst-case bounds over broad
families of networks: for nonatomic congestion games with affine costs the
PoA is at most $4/3$ and this bound is sharp (Roughgarden and
Tardos~\cite{roughgarden_tardos2002}),
and sharp bounds and growth rates are known for polynomial costs and beyond
(Roughgarden~\cite{roughgarden2003}; Dumrauf and
Gairing~\cite{dumrauf_gairing2006}; Roughgarden and
Tardos~\cite{roughgarden_tardos2004}; Chau and Sim~\cite{chau_sim2003};
Perakis~\cite{perakis2007}).
Empirical studies, however, show that in real networks the PoA behaves quite
differently from what worst-case analysis suggests: as the demand grows, the
PoA oscillates with a number of nonsmooth spikes, stays far below the
worst-case bounds, and returns to one in the heavily congested regime
(Youn et al.~\cite{youn2008}; Monnot et al.~\cite{monnot2017}; O'Hare et
al.~\cite{ohare2016}).

These observations have motivated a research program that studies the PoA
\emph{as a function of the traffic demand}. On the one hand, equilibria are
asymptotically efficient: under general assumptions the PoA converges to one
in light and in heavy traffic (Colini-Baldeschi et
al.~\cite{colini2019,colini2020}), and the
scalability framework of \citeauthor{wu2021}~\cite{wu2021} establishes this convergence along
arbitrary demand growth patterns; analogous convergence results have been
obtained along sequences of atomic congestion games (Wu et
al.~\cite{wu_mrx2023}; Cominetti et al.~\cite{cominetti2022approximation}). On the
other hand, the intermediate demand regime---where the PoA oscillates and
exhibits sharp kinks associated with changes of the set of optimal
routes---remains the object of active investigation (O'Hare et
al.~\cite{ohare2016}; Englert et al.~\cite{englert2010}).
Understanding the fine behavior of the PoA in this regime requires, first of
all, a precise description of its regularity as a function of the demand.

The dependence of equilibria and of the PoA on the demand has been studied
along three complementary lines. First, a one-dimensional regularity theory:
for single-OD networks, \citeauthor{cds_mp2024}~\cite{cds_mp2024} give a complete description in
which the PoA is $C^1$ at regular demands, its nonsmoothness is concentrated
at breakpoints where the set of optimal paths changes, and for affine costs
its maximum is attained at a breakpoint; for multi-commodity networks,
\citeauthor{cds_pt2024}~\cite{cds_pt2024} established smoothness along one-parameter demand curves
and resolved the phase-transition conjecture of \citeauthor{ohare2016}~\cite{ohare2016} under
proportional demand growth, while \citeauthor{cds_mo2024}~\cite{cds_mo2024} studied the monotonicity
of equilibria on the full demand space. Second, a metric sensitivity theory:
\citeauthor{wu_mohring2023}~\cite{wu_mohring2023} endowed the space of games sharing a network with a
metric and proved that the PoA is continuous everywhere and pointwise
H\"older continuous under simultaneous perturbations of demands and cost
functions, with the H\"older exponent improving to one precisely when the
cost functions are constant or strictly increasing. In the same work, the
differentiability analysis of the PoA was explicitly identified as an open
direction: observing that the metric structure of the full game space is too
complicated to allow a direct attack, the authors suggested that parametric
demands in a finite-dimensional Euclidean space may pave a way for future
work on the differentiability of the PoA. Third, a classical sensitivity
theory of equilibria themselves, addressing monotonicity, directional
differentiability, and parametric computation
(Hall~\cite{hall1978}; Dafermos and Nagurney~\cite{dafermos_nagurney1984};
Dafermos~\cite{dafermos1980}; Patriksson~\cite{patriksson2004}; Josefsson and
Patriksson~\cite{josefsson_patriksson2007}; Lu~\cite{lu2008}; Englert et
al.~\cite{englert2010}; Takalloo and Kwon~\cite{takalloo_kwon2020}; Klimm and
Warode~\cite{klimm_warode2019,klimm_warode2022}; Harks and Klimm~\cite{harks_klimm2015}).

Beyond the demand-dependence question itself, the literature has developed a
rich array of model extensions. These include congestion games with atomic
(weighted or splittable) players (Awerbuch et al.~\cite{awerbuch2013}),
stochastic and atomic variants (Cominetti et
al.~\cite{cominetti2019stochastic}; Wu et al.~\cite{wu_mrx2023}), dynamic
flows over time (Correa, Cristi, and Oosterwijk~\cite{correa_cristi_oosterwijk2021}; Graf and Harks~\cite{graf_harks2023}) and multi-modal dynamic traffic assignment (Pi, Ma, and Qian~\cite{pi2019multimodal}; Huang et al.~\cite{huang2019dynamic}),
capacity constraints and restricted route sets
(Correa et al.~\cite{correa2004}; Jahn et al.~\cite{jahn2005}), elastic demands
(Chau and Sim~\cite{chau_sim2003}), and boundedly rational route-choice behavior
(Li et al.~\cite{li_wang_nie2024}). Network-structure restrictions form another active
direction, with recent progress on series-parallel and matroid congestion
games
(Fujishige et al.~\cite{fujishige2017}; Hao and
Michini~\cite{hao_michini2024mp,hao_michini2024sagt}; Fokkema et
al.~\cite{fokkema_hoeksma_uetz2024}).
A parallel stream studies intervention mechanisms that reduce the
inefficiency of equilibria, such as tolls and coordination mechanisms
(Colini-Baldeschi et al.~\cite{colini_tolls2018}; Christodoulou et
al.~\cite{christodoulou2014mechanisms}), and congestion pricing for
multimodal urban networks (Zheng, Rérat, and
Geroliminis~\cite{zheng2015pricing}; Liu, Ma, and
Qian~\cite{liu2022curbside}),
and new analytical techniques continue to sharpen the available bounds across
model classes (Kashaev~\cite{kashaev2025}). Finally, a structural literature has
clarified the paradoxes that equilibria may exhibit, as well as the topology
underlying their uniqueness
(Braess~\cite{braess1968}; Dimos et al.~\cite{dimos_fotakis2023}; Matsubayashi
and Saito~\cite{matsubayashi_saito2023};
Milchtaich~\cite{milchtaich2000,milchtaich2005}; Meunier and
Pradeau~\cite{meunier_pradeau2014}).

Yet, between the one-dimensional smoothness results and the metric H\"older
bounds, the \emph{geometry} of the PoA's regularity on the full demand cone
remains open. We know that the PoA varies continuously, even pointwise
H\"older-continuously, under perturbations (Wu and
M\"ohring~\cite{wu_mohring2023}), and we know
where it fails to be smooth along one-dimensional slices of the demand space
(Cominetti et al.~\cite{cds_mp2024,cds_pt2024}); but we do not know where it is smooth as a
multivariate map, what its singular set looks like, or how much smoothness
structural assumptions provide. The non-proportional counterexamples of
\citeauthor{cds_pt2024}~\cite{cds_pt2024} indicate that genuinely multivariate phenomena are to be
expected. The present paper develops a regularity theory that answers these
questions on the full $n$-dimensional demand cone. In particular, it carries
out, on the demand space, precisely the differentiability analysis of the PoA
that \citeauthor{wu_mohring2023}~\cite{wu_mohring2023} left for future work, and it answers their open
question on when the PoA is differentiable.

Our analysis is built on a multivariate value-function framework. We show
that the Beckmann value function is convex and continuously differentiable
on the whole positive demand cone, and that its gradient is precisely the
equilibrium origin--destination cost vector (\cref{lem:value-C1}). Building
on this, we introduce the active-set regions---the demand regions induced by
a fixed set of positive-flow paths---and prove that the user-equilibrium cost
is locally $C^1$ on the interior of every active-set region
(\cref{thm:local-C1-CUE}), so that the PoA is $C^1$ on the regular region
(\cref{C1_regularity_of_PoA}). Notably, the analysis on the user-equilibrium
side requires only continuous nondecreasing costs; in particular, strict
monotonicity of the edge costs is not required on the UE side.

We then characterize the nondifferentiability set. Nondifferentiability can
occur only in the exceptional region where active sets switch; but that
candidate region can be misleading, as we show through an example, based on a
fat Cantor set, in which the exceptional region has positive measure and yet
the equilibrium cost---and hence the PoA---is infinitely smooth
(\cref{ex:E_positive_measure}). The actual nondifferentiability set is much
thinner: it always has Lebesgue measure zero
(\cref{prop:nondiff-measure-zero}) and, under a standard cost structure in
which every edge cost is either strictly increasing or constant, it is
covered by at most countably many codimension-one $C^1$ submanifolds of the
demand space (\cref{thm:nondiff-countable-manifold-cover}). In this precise
sense, the breakpoints of the one-dimensional theory are the traces of these
submanifolds along demand curves.

Higher-order regularity is not free: even with smooth costs, the equilibrium
cost may fail to be $C^3$ in the interior of an active-set region, as we show
in a three-commodity example (\cref{ex:Cr-degenerate-example}). Under the
strictly-increasing/constant cost structure, however, the equilibrium
edge-cost vector, the OD-cost vector, and the user-equilibrium cost are
cellwise $C^r$ (\cref{thm:Cr_regularity_UEcost}), and so is the PoA on the
intersection of the UE and SO regular regions (\cref{cor:cr-poa}). For affine
costs, the exceptional sets become finite unions of lower-dimensional
polyhedral cells, linked
by the exact scaling law $\mathcal E^{\mathrm{UE}} = 2\,\mathcal
E^{\mathrm{SO}}$ (\cref{prop:affine-exceptional-polyhedral}), and the PoA is
a quotient of quadratic functions on each cell. In this setting we also
establish a boundary maximum principle (\cref{thm:poa-cell-boundary-maximum}):
the maximum of the PoA over a cell closure is always attained on the cell
boundary, generalizing the single-OD endpoint maximum principle of
\citeauthor{cds_mp2024}~\cite{cds_mp2024} to the multivariate demand space; in a
three-commodity example, the maximum is attained in the relative interior of
a one-dimensional boundary edge (\cref{ex:three-commodity-affine}).

Finally, the theory is illustrated numerically on the Sioux Falls benchmark
network (\cref{sec:numerical}). On demand slices spanned by three
representative OD pairs of the $528$-dimensional demand cone, the
nondifferentiability set of the PoA displays precisely the predicted
geometry---curved codimension-one sheets under BPR costs, planar polyhedral
cells under affine costs; the gradient identity $D_\mu V=\lambda$ underlying
the whole framework is verified to within $10^{-9}$; the scaling law
$\mathcal E^{\mathrm{UE}}=2\,\mathcal E^{\mathrm{SO}}$ is confirmed exactly
on the demand ray; and the maximum of the PoA over an active-set cell is
observed in the relative interior of a boundary edge---a network-scale
instance of the boundary maximum principle.

Technically, our analysis lifts the value-function approach from scalar to
vector demands. The subdifferential of the Beckmann value function is
identified with the set of equilibrium cost vectors through the
perturbation-duality theory of convex programs
(Rockafellar~\cite{rockafellar1970}; Rockafellar and
Wets~\cite{rockafellar_wets1998}). Regularity on active-set
interiors is extracted from restricted Beckmann problems, and the local geometry
near active-set boundaries is resolved by constructing $C^1$ extension
branches via an implicit-function argument, in which a kernel-based reduction
handles the degeneracy caused by constant-cost edges. The measure-zero
property of the nondifferentiability set follows from the almost-everywhere
second-order differentiability of the convex value function
(Alexandrov~\cite{alexandrov1939}; Mignot~\cite{mignot1976}). The passage from this local manifold
description to a global countable cover relies on Lindel\"of's covering
theorem: since the demand cone is second countable, every open cover of the
nondifferentiability set admits a countable subcover (Kelley~\cite{kelley1975}),
which yields the desired ``at most countable'' conclusion.

These results turn the mechanism picture of \citeauthor{ohare2016}~\cite{ohare2016} into a geometric
theory on the demand space, and complement the metric sensitivity bounds of
\citeauthor{wu_mohring2023}~\cite{wu_mohring2023} with a structural description of where and why the PoA
fails to be smooth. They thus provide the regularity foundation for
comparative statics of the PoA with respect to multidimensional demand.

The rest of the paper is organized as follows. \Cref{sec:model} introduces
the multivariate model, the active-set regions, and the $C^1$ regularity
framework. \Cref{sec:nondiff} studies the nondifferentiability set: its
localization, its measure, and its countable manifold cover.
\cref{sec:higher-order} investigates higher-order regularity, including the
$C^3$ counterexample and the cellwise $C^r$ theory. \Cref{sec:affine}
specializes to affine networks, establishes the boundary maximum principle
for the PoA, and illustrates it on a three-commodity example. \Cref{sec:numerical} illustrates the
theory on the Sioux Falls benchmark network. \Cref{sec:concluding} collects
concluding
remarks, including the precise relationship between the present results and
the differentiability question of \citeauthor{wu_mohring2023}~\cite{wu_mohring2023}. All proofs are
deferred to the appendices.

\section{Multivariate PoA Model and \(C^1\) Regularity}
\label{sec:model}

\subsection{Preliminaries and the PoA as a Function of the Demand Vector}

We consider a finite directed multigraph \(\mathcal G=(\mathcal V,\mathcal E)\) representing a traffic network, where \(\mathcal V\) is the vertex set, \(\mathcal E\) is the edge set, and \(m:=|\mathcal E|\). The network is endowed with \(n\) origin--destination pairs
\(\{(O_i,D_i)\}_{i=1}^n\). For the \(i\)-th OD pair, let \(\mathcal P_i\) denote its set of admissible paths. The set of all paths is denoted by \(\mathcal P:=\bigcup_{i=1}^n\mathcal P_i\), and we write \(k:=|\mathcal P|<\infty\).
If a path \(p\) traverses an edge \(e\), we write \(e\in p\). Here the symbol \(\in\) only records the incidence relation between edges and paths.

Let \(f=(f_p)_{p\in\mathcal P}\in\R^k\) denote the path-flow vector. Let \(C\in\R^{m\times k}\) be the edge--path incidence matrix, with entries \(C_{ep}:=\mathbf 1_{\{e\in p\}}\). The edge-load vector induced by \(f\) is denoted by \(x=Cf\in\R^m\). Each edge \(e\in\mathcal E\) is equipped with a cost function \(c_e:[0,+\infty)\to\R_+\), where \(c_e(x_e)\) is the travel cost on edge \(e\) under edge load \(x_e\). Throughout this subsection, every \(c_e\) is assumed to be continuous and nondecreasing. Given a path flow \(f\), the edge cost and the path cost are respectively defined by \(\phi_e(f):=c_e\bigl((Cf)_e\bigr)\) and \(c_p(f):=\sum_{e\in p}c_e\bigl((Cf)_e\bigr)\).
The total network travel cost induced by \(f\) is
\[
\Cost(f)=\sum_{p\in\mathcal P} f_p\,c_p(f)=\sum_{e\in\mathcal E}(Cf)_e\,c_e\bigl((Cf)_e\bigr).
\]

The demand vector is denoted by \(\bm\mu=(\mu_1,\dots,\mu_n)\in U^\circ\), where \(U^\circ:=(0,+\infty)^n\), and the component \(\mu_i\) represents the traffic demand of the \(i\)-th OD pair. Let \(R\in\R^{n\times k}\) be the OD--path incidence matrix, with entries \(R_{ip}:=\mathbf 1_{\{p\in\mathcal P_i\}}\). The demand constraints can then be written as \(Rf=\bm\mu\). For a given demand \(\bm\mu\in U^\circ\), the feasible path-flow set is \(\mathcal F(\bm\mu):=\{f\in\R_{\geq 0}^k:\ Rf=\bm\mu\}\).
Since each \(\mathcal P_i\) is nonempty, \(\mathcal F(\bm\mu)\) is nonempty for every \(\bm\mu\in U^\circ\). Moreover, \(\mathcal F(\bm\mu)\) is a compact convex polytope.

For a given demand \(\bm\mu\in U^\circ\), a feasible flow \(f^{*\UE}\in\mathcal F(\bm\mu)\) is called a Wardrop equilibrium flow if, for each OD pair \(i\), there exists a scalar \(\lambda_i\) such that
\begin{equation}
c_p(f^{*\UE})=\lambda_i
\qquad
\text{whenever }p\in\mathcal P_i,\ f_p^{*\UE}>0,
\label{eq:wardrop-equal}
\end{equation}
and
\begin{equation}
c_p(f^{*\UE})\ge \lambda_i
\qquad
\text{whenever }p\in\mathcal P_i,\ f_p^{*\UE}=0.
\label{eq:wardrop-ineq}
\end{equation}
Thus, for each OD pair, positive flow is assigned only to paths of minimum cost. The scalar \(\lambda_i\) is called the equilibrium cost of the \(i\)-th OD pair at this equilibrium.

A basic value-level property of Wardrop equilibria is that, for any fixed demand \(\bm\mu\in U^\circ\), the Wardrop equilibrium flow \(f^{*\UE}\) need not be unique, but the equilibrium edge-cost vector induced by any Wardrop equilibrium is unique. This uniqueness is classical; see, for instance, \citeauthor{smith1979}~\cite{smith1979} and \citeauthor{patriksson1994}~\cite[Theorem~2.1]{patriksson1994}. For completeness we recall a short argument: if two Beckmann minimizers induce edge-load vectors \(x\) and \(y\), then convexity of the Beckmann potential implies that \(x\mapsto\int_0^{x_e}c_e(t)\,dt\) is affine on the segment \([x_e,y_e]\) for every edge \(e\); hence \(c_e\) is constant on \([x_e,y_e]\) by continuity, and therefore \(c_e(x_e)=c_e(y_e)\). Hence the equilibrium edge cost \(\tau_e(\bm\mu)\) is a well-defined function of the demand vector \(\bm\mu\). Consequently, for each OD pair \(i\), its equilibrium cost is defined by \(\lambda_i(\bm\mu):=\min_{p\in\mathcal P_i}\sum_{e\in p}\tau_e(\bm\mu)\).
Accordingly, \(\bm\lambda(\bm\mu):=(\lambda_1(\bm\mu),\dots,\lambda_n(\bm\mu))^\top\) is a well-defined vector-valued function of \(\bm\mu\).

The user-equilibrium total cost is therefore defined without ambiguity by
\[
C^{\UE}(\bm\mu):=\sum_{i=1}^n\mu_i\lambda_i(\bm\mu)=\bm\mu^\top\bm\lambda(\bm\mu).
\]
This definition does not depend on the choice of a particular Wardrop equilibrium path-flow minimizer. Equivalently, for any Wardrop equilibrium flow \(f^{*\UE}\), \(C^{\UE}(\bm\mu)=\sum_{e\in\mathcal E}(Cf^{*\UE})_e\,c_e\bigl((Cf^{*\UE})_e\bigr)\).

To characterize Wardrop equilibria, introduce the Beckmann potential
\[
B(f):=\sum_{e\in\mathcal E}\int_0^{(Cf)_e}c_e(t)\,dt.
\]
Since each \(c_e\) is continuous and nondecreasing, the map \(x_e\mapsto \int_0^{x_e}c_e(t)\,dt\) is convex and differentiable. Hence \(B\) is a convex differentiable function of the path-flow vector \(f\). The following classical result gives the Beckmann characterization of Wardrop equilibria.

\begin{proposition}[Beckmann--McGuire--Winsten characterization]
	Under the standing assumptions, fix \(\bm\mu\in U^\circ\). A minimizer of the Beckmann problem \(\min_{f\in\mathcal F(\bm\mu)}B(f)\) induces a Wardrop equilibrium. Conversely, the load profile of any Wardrop equilibrium corresponds to an optimal load profile of the Beckmann problem.
\end{proposition}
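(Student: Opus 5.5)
The plan is to read both directions off the Karush--Kuhn--Tucker conditions of the Beckmann program. This program has a convex differentiable objective and only linear constraints \(Rf=\bm\mu\), \(f\ge 0\). Because the constraints are affine, no constraint qualification beyond feasibility is needed, and KKT is necessary and sufficient for optimality. The first step is to compute the gradient. Since \(x_e\mapsto\int_0^{x_e}c_e(t)\,dt\) has derivative \(c_e(x_e)\) by the fundamental theorem of calculus (continuity of \(c_e\)), the chain rule gives
\[
\frac{\partial B}{\partial f_p}(f)=\sum_{e}C_{ep}\,c_e\bigl((Cf)_e\bigr)=c_p(f).
\]

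For the forward direction, I write the KKT system at a minimizer \(f^*\). There are multipliers \(\lambda\in\R^n\) for \(Rf=\bm\mu\) and \(\nu\in\R^k_{\ge0}\) for \(f\ge0\) such that \(c_p(f^*)-\lambda_i-\nu_p=0\) for \(p\in\mathcal P_i\) and \(\nu_pf^*_p=0\). If \(f^*_p>0\), complementarity gives \(\nu_p=0\), so \(c_p(f^*)=\lambda_i\), which is \eqref{eq:wardrop-equal}. If \(f^*_p=0\), then \(c_p(f^*)=\lambda_i+\nu_p\ge\lambda_i\), which is \eqref{eq:wardrop-ineq}. Equivalently, and avoiding multiplier theory, one can use the first-order condition \(\nabla B(f^*)^\top(g-f^*)\ge0\) for all \(g\in\mathcal F(\bm\mu)\). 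Take \(g\) to be \(f^*\) with a small mass \(\varepsilon\le f^*_p\) shifted from a used path \(p\) to any \(q\in\mathcal P_i\). This yields \(c_q(f^*)\ge c_p(f^*)\), so all used paths of pair \(i\) share the minimum cost \(\lambda_i:=\min_{q\in\mathcal P_i}c_q(f^*)\). Since \(\mu_i>0\), at least one path of each pair is used, so \(\lambda_i\) is attained.

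For the converse, let \(f\) be a Wardrop equilibrium with scalars \(\lambda_i\). For any \(g\in\mathcal F(\bm\mu)\),
\[
\nabla B(f)^\top(g-f)=\sum_i\sum_{p\in\mathcal P_i}c_p(f)(g_p-f_p)\ge\sum_i\lambda_i\Bigl(\sum_{p\in\mathcal P_i}g_p-\mu_i\Bigr)=0.
\]
Here \(c_p(f)g_p\ge\lambda_ig_p\) holds for every path, and \(c_p(f)f_p=\lambda_if_p\) holds by \eqref{eq:wardrop-equal}. By convexity, \(B(g)\ge B(f)+\nabla B(f)^\top(g-f)\ge B(f)\), so \(f\) is a Beckmann minimizer and its load profile \(Cf\) is optimal.

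The main point needing care, rather than a genuine obstacle, is to justify that \(B\) is convex and differentiable using only continuity and monotonicity of \(c_e\). Convexity holds because \(\int_0^{x}c_e\) has a nondecreasing derivative, and composing a convex function with the linear map \(f\mapsto Cf\) preserves convexity. A second point is existence of a minimizer, which holds because \(B\) is continuous on the compact nonempty polytope \(\mathcal F(\bm\mu)\). Finally, the statement is phrased at the level of load profiles because \(B\) depends on \(f\) only through \(x=Cf\). Optimality of \(f\) therefore translates directly into optimality of \(Cf\) in the edge-load formulation, even when path flows are not unique.
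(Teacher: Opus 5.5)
Your proof is correct. It is the standard first-order (KKT/variational-inequality) argument; the paper quotes this result as classical without proof, but it uses the same reasoning elsewhere. In the proof of \cref{lem:value-C1} it runs exactly your converse chain: \((\nabla B(f))_p=c_p(f)\), then \(\sum_p c_p(f)g_p\ge\sum_i\lambda_i\sum_{p\in\mathcal P_i}g_p\), then the gradient inequality for the convex \(B\). In the proof of \cref{thm:local-C1-CUE} it reads Wardrop conditions off first-order optimality, as in your forward direction.

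One point to phrase carefully: at zero edge loads the derivative of \(x\mapsto\int_0^x c_e(t)\,dt\) is only one-sided. This is harmless, because you differentiate only along feasible directions inside \(\mathcal F(\bm\mu)\). Alternatively, you can extend \(c_e\) by the constant \(c_e(0)\) on \((-\infty,0)\), so that the KKT theorem applies to a \(C^1\) convex objective on an open set.
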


The corresponding Beckmann value function is defined by
\[
V(\bm\mu):=\min_{f\in\mathcal F(\bm\mu)}B(f),
\qquad
\bm\mu\in U^\circ.
\]
Since \(\mathcal F(\bm\mu)\) is nonempty and compact and \(B\) is continuous, the above minimum is attained.

The system optimum is defined by minimizing the total network travel cost under a fixed demand vector \(\bm\mu\). Its value function is \(C^{\SO}(\bm\mu):=\min_{f\in\mathcal F(\bm\mu)}\Cost(f)\).
Since \(\mathcal F(\bm\mu)\) is nonempty and compact and \(\Cost\) is continuous, the above minimum is attained. Thus \(C^{\SO}(\bm\mu)\) is a well-defined value function. Any feasible flow attaining this minimum is called a system-optimal flow and is denoted by \(f^{*\SO}\in\operatorname*{arg\,min}_{f\in\mathcal F(\bm\mu)}\Cost(f)\).

If each edge cost satisfies \(c_e\in C^1([0,+\infty))\), define the transformed cost
\[
\bar c_e(x):=\frac{d}{dx}\bigl[xc_e(x)\bigr]=c_e(x)+x c'_e(x).
\]
When \(x\mapsto x c_e(x)\) is convex, the system-optimum problem is equivalent to a Beckmann-type problem on the same network with edge costs \(\bar c_e\).

\begin{proposition}[Beckmann reduction for the system-optimum value function]
	\label{prop:BMW_for_SOcost}
	Suppose that \(c_e\in C^1([0,+\infty))\) for every \(e\in\mathcal E\), and that \(x\mapsto x c_e(x)\) is convex. Then
	\[
	C^{\SO}(\bm\mu)=\min_{f\in\mathcal F(\bm\mu)}\sum_{e\in\mathcal E}\int_0^{(Cf)_e}\bar c_e(t)\,dt,
	\]
	where \(\bar c_e(x)=c_e(x)+x c'_e(x)\). Equivalently, the system-optimum value function coincides with the Beckmann value function associated with the transformed edge costs \((\bar c_e)_{e\in\mathcal E}\).
\end{proposition}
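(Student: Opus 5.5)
The plan is a pointwise identity of objectives, followed by minimization over the same feasible set. For each edge \(e\), set \(g_e(x):=x\,c_e(x)\) on \([0,+\infty)\). Since \(c_e\in C^1([0,+\infty))\), the function \(g_e\) is \(C^1\) with \(g_e'(x)=c_e(x)+x c_e'(x)=\bar c_e(x)\), and \(g_e(0)=0\). The fundamental theorem of calculus then gives
\[
\int_0^{x}\bar c_e(t)\,dt=g_e(x)-g_e(0)=x\,c_e(x)\qquad\text{for every }x\ge 0.
\]
Because \(\bar c_e\) is continuous, this is an ordinary Riemann integral and no measure-theoretic care is needed.

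Next I substitute \(x=(Cf)_e\), which is nonnegative since \(f\ge 0\) and \(C\) has \(0/1\) entries, and sum over \(e\in\mathcal E\). This yields, for every \(f\in\mathcal F(\bm\mu)\),
\[
\sum_{e\in\mathcal E}\int_0^{(Cf)_e}\bar c_e(t)\,dt=\sum_{e\in\mathcal E}(Cf)_e\,c_e\bigl((Cf)_e\bigr)=\Cost(f).
\]
So the Beckmann potential built from \((\bar c_e)\), call it \(\bar B\), coincides with \(\Cost\) on the whole feasible set. Taking minima over the nonempty compact set \(\mathcal F(\bm\mu)\) gives \(C^{\SO}(\bm\mu)=\min_{f\in\mathcal F(\bm\mu)}\bar B(f)\), and the two problems have the same minimizers.

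It remains to justify calling this ``the Beckmann value function associated with \((\bar c_e)\)''. The standing framework requires each edge cost to be continuous and nondecreasing on \([0,+\infty)\). Continuity of \(\bar c_e\) follows from \(c_e\in C^1\). Monotonicity follows from the convexity hypothesis: \(g_e\) is convex and \(C^1\), so its derivative \(\bar c_e\) is nondecreasing. Nonnegativity holds because \(\bar c_e(x)=c_e(x)+xc_e'(x)\ge 0\), using \(c_e\ge 0\), \(x\ge 0\), and \(c_e'\ge 0\), the last since \(c_e\) is nondecreasing. Hence \((\bar c_e)\) is an admissible cost profile, and the earlier Beckmann characterization applies to it, so system optima are exactly Wardrop equilibria for \(\bar c_e\).

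The only step needing care is this admissibility check: convexity of \(x\mapsto xc_e(x)\) is precisely what makes \(\bar c_e\) monotone. Everything else is the fundamental theorem of calculus.
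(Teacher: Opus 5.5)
Your proof is correct and follows the reasoning the paper relies on. The paper gives no separate proof of this proposition; it treats as evident that \(\int_0^x\bar c_e(t)\,dt=xc_e(x)\) by the fundamental theorem of calculus, so the transformed Beckmann potential coincides with \(\Cost\) on \(\mathcal F(\bm\mu)\). Your admissibility check — that convexity of \(x\mapsto xc_e(x)\) makes \(\bar c_e\) continuous, nonnegative and nondecreasing, and is needed only for this and not for the identity itself — is exactly the point the paper uses implicitly when it later applies \cref{lem:value-C1} to the transformed network.
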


For the subsequent analysis, we impose the following assumption.
\begin{assumption}[Positivity of the SO cost]
	\label{ass:SO_Cost_positive}
	On the positive demand domain \(U^\circ=(0,+\infty)^n\), assume that \(C^{\SO}(\bm\mu)>0\) for every \(\bm\mu\in U^\circ\).
\end{assumption}
Under the basic nonnegativity assumption on edge costs, this assumption only rules out the degenerate zero-cost case. If \(C^{\SO}(\bm\mu)=0\) for some \(\bm\mu\in U^\circ\), then the system-optimal cost is already zero, and no positive efficiency loss caused by selfish routing is present. We henceforth restrict attention to the nondegenerate regime covered by the above positivity assumption. Under this assumption, the Price of Anarchy is defined on \(U^\circ\) by
\[
\PoA(\bm\mu):=\frac{C^{\UE}(\bm\mu)}{C^{\SO}(\bm\mu)}.
\]
Thus \(\PoA\) is a well-defined demand-dependent value function. The object studied in this paper is the full-dimensional demand map \(\PoA:U^\circ\to\R\), \(\bm\mu\longmapsto \PoA(\bm\mu)\).

\begin{remark}[Path-flow selections and value-level objects]
	Throughout the paper, only \(\bm\tau(\bm\mu)\), \(\bm\lambda(\bm\mu)\), \(C^{\UE}(\bm\mu)\), \(C^{\SO}(\bm\mu)\), and \(\PoA(\bm\mu)\) are treated as functions of the demand vector \(\bm\mu\). In general, Wardrop equilibrium path flows and system-optimal path flows may be set-valued. Unless a selection is explicitly specified, neither \(f^{*\UE}(\bm\mu)\) nor \(f^{*\SO}(\bm\mu)\) is treated as a single-valued function of \(\bm\mu\).
\end{remark}

\subsection{Active-Set Regions and Restricted Beckmann Problems}

This subsection introduces demand regions induced by active path sets and, for each fixed active path set, the corresponding restricted Beckmann problem. In the preceding subsection, Wardrop equilibria have been formulated as minimizers of the Beckmann problem. Hence, throughout this subsection, active path sets are understood with respect to Beckmann minimizers.

For any path flow \(f\in\mathcal F(\bm\mu)\), define its positive-flow support by \(\supp(f):=\{p\in\mathcal P:\ f_p>0\}\).
If \(f\) is a Wardrop equilibrium flow under demand \(\bm\mu\), then \(\supp(f)\) is called the active path set of this equilibrium flow. Since Wardrop equilibrium path flows need not be unique, the active path set is not treated as a single-valued object determined solely by \(\bm\mu\).

For each candidate active path set \(\alpha\subseteq\mathcal P\), define
\[
\Omega_\alpha
:=
\left\{
\bm\mu\in U^\circ:
\exists\ f^{*\UE}\in\argmin_{f\in\mathcal F(\bm\mu)}B(f)
\ \text{such that}\ 
\supp(f^{*\UE})=\alpha
\right\}.
\]
Since every \(\bm\mu\in U^\circ\) admits at least one Beckmann minimizer and \(\mathcal P\) is finite, we obtain the finite cover \(U^\circ=\bigcup_{\alpha\subseteq\mathcal P}\Omega_\alpha\).

Fix a candidate active path set \(\alpha\subseteq\mathcal P\). Write \(\alpha_i:=\alpha\cap\mathcal P_i\) for \(i=1,\dots,n\).
If \(\alpha_i=\varnothing\) for some \(i\), then \(\alpha\) cannot be the active path set associated with any positive demand vector in \(U^\circ\). We therefore consider only candidate active path sets satisfying \(\alpha_i\neq\varnothing\) for every \(i\). After deleting all paths outside \(\alpha\), let \(f_\alpha\in\R^{|\alpha|}\) denote the restricted path-flow vector, and let \(C_\alpha\) and \(R_\alpha\) denote the corresponding restricted edge--path incidence matrix and OD--path incidence matrix, respectively. For \(\bm\mu\in U^\circ\), define the restricted feasible set by \(\mathcal F_\alpha(\bm\mu):=\{f_\alpha\in\R_{\geq 0}^{|\alpha|}:\ R_\alpha f_\alpha=\bm\mu\}\).
The corresponding restricted Beckmann potential is \(B_\alpha(f_\alpha):=\sum_{e\in\mathcal E}\int_0^{(C_\alpha f_\alpha)_e}c_e(t)\,dt\).
Whenever \(\mathcal F_\alpha(\bm\mu)\neq\varnothing\), the associated restricted Beckmann value function is defined by \(V_\alpha(\bm\mu):=\min_{f_\alpha\in\mathcal F_\alpha(\bm\mu)}B_\alpha(f_\alpha)\).
In particular, if \(\bm\mu\in\Omega_\alpha\), then there exists a Beckmann minimizer of the original problem whose support is exactly \(\alpha\). Restricting this minimizer to \(\alpha\) gives a feasible solution of the restricted problem and attains the original Beckmann value. Hence \(V_\alpha(\bm\mu)=V(\bm\mu)\).

For any \(f_\alpha\in\mathcal F_\alpha(\bm\mu)\), denote the induced edge-load vector by \(x^\alpha(f_\alpha):=C_\alpha f_\alpha\).
For any path \(p\in\mathcal P_i\), regardless of whether \(p\in\alpha_i\), define its path cost under this restricted edge-load vector by \(\ell_p^\alpha(f_\alpha):=\sum_{e\in p}c_e\bigl(x_e^\alpha(f_\alpha)\bigr)\).
Thus, \(\bm\mu\in\Omega_\alpha\) is equivalent to the existence of \(f_\alpha\in\R^{|\alpha|}\) and \(\bm\lambda=(\lambda_1,\dots,\lambda_n)^\top\in\R^n\) such that
\[
f_\alpha>\bmzero,
\qquad
R_\alpha f_\alpha=\bm\mu,
\]
and, for every \(i=1,\dots,n\),
\[
\ell_p^\alpha(f_\alpha)=\lambda_i
\qquad
(p\in\alpha_i),
\qquad
\ell_q^\alpha(f_\alpha)\ge \lambda_i
\qquad
(q\in\mathcal P_i\setminus\alpha_i).
\]
Here \(\alpha\) records the strictly positive path-flow support; paths with zero flow but equal cost are not included in \(\alpha\).

The restricted Beckmann problem converts the local analysis associated with a fixed active path set into a value-function problem over a fixed collection of paths. This construction will serve as the basis for the subsequent local regularity analysis.

\subsection{Beckmann Value Functions and \(C^1\) Regularity}

This subsection develops the main \(C^1\) regularity theory of the paper. We first establish the regularity, with respect to the demand vector, of the original Beckmann value function \(V\), the restricted Beckmann value function \(V_\alpha\), and the SO value function \(C^{\SO}\). We then derive the local \(C^1\) regularity result on active-set interiors.

\begin{lemma}[\(C^1\) regularity of the Beckmann value function]
	\label{lem:value-C1}
	Assume that each edge cost function \(c_e:[0,+\infty)\to\R_+\) is continuous and nondecreasing. Then the original Beckmann value function \(V(\bm\mu):=\min_{f\in\mathcal F(\bm\mu)}B(f)\), \(\bm\mu\in U^\circ\), satisfies
	\[
	V\in C^1(U^\circ),
	\qquad
	D_{\bm\mu}V(\bm\mu)=\bm\lambda(\bm\mu).
	\]
	In particular, the OD equilibrium cost vector \(\bm\lambda\) is continuous on \(U^\circ\).
\end{lemma}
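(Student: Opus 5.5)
The plan is to treat \(V\) as the optimal value of a convex program perturbed in its right-hand side. We then identify its subdifferential with the set of OD equilibrium cost vectors. Since that set is a singleton at every \(\bm\mu\in U^\circ\), \(V\) is differentiable. Differentiability of a finite convex function on an open set automatically upgrades to \(C^1\).

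\emph{Step 1 (convexity).} Extend \(V\) to \(\R^n\) by \(V(\bm\mu)=+\infty\) when \(\mathcal F(\bm\mu)=\varnothing\). Joint convexity of \((f,\bm\mu)\mapsto B(f)+\delta_{\{f\ge 0,\ Rf=\bm\mu\}}\) shows that \(V\) is convex as the partial minimization of a jointly convex function. It is finite on the open cone \(U^\circ\), hence continuous there.

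\emph{Step 2 (subgradients are Lagrange multipliers).} Fix \(\bm\mu\in U^\circ\) and a Beckmann minimizer \(f^*\). The constraints are linear, so the KKT conditions hold without any further qualification. Thus there is \(\bm\lambda\) with \(c_p(f^*)\ge\lambda_i\) for all \(p\in\mathcal P_i\), with equality on \(\supp f^*\). These conditions are exactly the Wardrop conditions, so \(\lambda_i=\min_{p\in\mathcal P_i}\sum_{e\in p}\tau_e(\bm\mu)=\lambda_i(\bm\mu)\) is uniquely determined, using the uniqueness of \(\bm\tau\) recalled above.

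We then show \(\bm\lambda(\bm\mu)\in\partial V(\bm\mu)\). For any \(\bm\nu\) and \(g\in\mathcal F(\bm\nu)\), convexity of \(B\) gives
\[
B(g)\ge B(f^*)+\nabla B(f^*)^\top(g-f^*).
\]
Write \(\nabla B(f^*)=(c_p(f^*))_p\). Then \(\nabla B(f^*)^\top g\ge \bm\lambda^\top Rg=\bm\lambda^\top\bm\nu\) and \(\nabla B(f^*)^\top f^*=\bm\lambda^\top\bm\mu\). Minimizing over \(g\) yields \(V(\bm\nu)\ge V(\bm\mu)+\bm\lambda^\top(\bm\nu-\bm\mu)\).

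Conversely, take any \(\bm y\in\partial V(\bm\mu)\). The perturbation-duality theory of Rockafellar identifies \(\partial V(\bm\mu)\) with the set of optimal dual multipliers of the constraint \(Rf=\bm\mu\), so \(\bm y\) is a KKT multiplier. Alternatively, one argues directly: \(\bm y\in\partial V(\bm\mu)\) means \(f^*\) minimizes \(B(f)-\bm y^\top Rf\) over \(f\ge 0\). The first-order conditions of that problem are \(c_p(f^*)\ge y_i\), with equality on the support. Since \(\mu_i>0\), some path in \(\mathcal P_i\) carries positive flow, which forces \(y_i=\min_{p\in\mathcal P_i}c_p(f^*)=\lambda_i(\bm\mu)\). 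Hence \(\partial V(\bm\mu)=\{\bm\lambda(\bm\mu)\}\).

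\emph{Step 3 (\(C^1\)).} A convex function that is finite near \(\bm\mu\) and has a singleton subdifferential there is differentiable at \(\bm\mu\) with gradient equal to that element (Rockafellar, Thm.~25.1). A convex function differentiable on an open set is continuously differentiable there (Thm.~25.5), so \(V\in C^1(U^\circ)\) with \(D_{\bm\mu}V=\bm\lambda\). Continuity of \(\bm\lambda\) then follows immediately.

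The main obstacle is the converse inclusion in Step 2. One must verify that \(\bm y\in\partial V(\bm\mu)\) really yields the Lagrangian-minimization characterization, i.e. that \(V(\bm\nu)\ge V(\bm\mu)+\bm y^\top(\bm\nu-\bm\mu)\) for all \(\bm\nu\) implies \(B(f)-\bm y^\top Rf\ge B(f^*)-\bm y^\top\bm\mu\) for all \(f\ge0\). This follows by taking \(\bm\nu=Rf\), which lies in the closed cone where \(V\) is finite; the extended \(V\) handles the boundary case \(\nu_i=0\). The positivity \(\mu_i>0\) is what pins \(y_i\) down, and this is where the restriction to the open cone is essential.
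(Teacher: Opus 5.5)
Your proposal is correct and follows the paper's proof essentially step for step. Both arguments establish convexity of \(V\), use the gradient inequality for \(B\) to get \(\bm\lambda(\bm\mu)\in\partial V(\bm\mu)\), show that the subgradient is unique, and conclude \(V\in C^1(U^\circ)\) from Rockafellar's Theorems~25.1 and~25.5. The one difference is your direct treatment of the converse inclusion: a subgradient \(\bm y\) makes \(f^*\) minimize \(B(f)-\bm y^\top Rf\) over \(f\ge0\), and \(\mu_i>0\) forces \(y_i=\min_{p\in\mathcal P_i}c_p(f^*)\). This is a self-contained substitute for the paper's appeal to perturbation duality (Theorems~28.2 and~29.1), and it gets the singleton property from one fixed minimizer rather than from uniqueness of equilibrium edge costs across minimizers.
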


\begin{proof}{Proof}
	The proof is given in Appendix \ref{app:sec2-proofs}.
\end{proof}

The same value-function regularity result applies to the restricted Beckmann problem introduced in the preceding subsection. For any admissible active path set \(\alpha\subseteq\mathcal P\), the restricted Beckmann value function \(V_\alpha\) is \(C^1\) on the interior of its demand domain. Let \(\bm\lambda^\alpha(\bm\mu):=(\lambda_1^\alpha(\bm\mu),\dots,\lambda_n^\alpha(\bm\mu))^\top\) denote the OD equilibrium cost vector induced by the restricted Beckmann problem. Then \(D_{\bm\mu}V_\alpha(\bm\mu)=\bm\lambda^\alpha(\bm\mu)\).

If \(\bm\mu^0\in\intO(\Omega_\alpha)\), then there exists an open neighborhood \(N\subset U^\circ\) of \(\bm\mu^0\) such that \(N\subset\Omega_\alpha\). By the consistency between \(V_\alpha\) and \(V\) on \(\Omega_\alpha\), established in the preceding subsection, we have, for every \(\bm\mu\in N\), \(V(\bm\mu)=V_\alpha(\bm\mu)\) and \(\bm\lambda(\bm\mu)=\bm\lambda^\alpha(\bm\mu)\).

\begin{theorem}[Local \(C^1\) regularity of \(C^{\UE}\) on active-set interiors]
	\label{thm:local-C1-CUE}
	Assume that each edge cost function \(c_e\in C^1([0,+\infty))\). If \(\bm\mu^0\in\intO(\Omega_\alpha)\) for some admissible active path set \(\alpha\subseteq\mathcal P\), then there exists an open neighborhood \(N\subset U^\circ\) of \(\bm\mu^0\) such that \(C^{\UE}\in C^1(N)\).
\end{theorem}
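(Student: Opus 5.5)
Since \(C^{\UE}(\bm\mu)=\bm\mu^\top\bm\lambda(\bm\mu)\) and, by \cref{lem:value-C1}, \(\bm\lambda=D_{\bm\mu}V\) is merely continuous, the whole task is to show that \(\bm\lambda\) is \(C^1\) near \(\bm\mu^0\). Equivalently, we need \(V\in C^2\) locally, which is more than \cref{lem:value-C1} gives. We use the local identity \(\bm\lambda=\bm\lambda^\alpha\) on a neighborhood \(N\subset\Omega_\alpha\) and argue entirely within the restricted Beckmann problem. On \(N\) every point carries an equilibrium with support exactly \(\alpha\). Such an equilibrium satisfies \(f_\alpha>\bmzero\), \(R_\alpha f_\alpha=\bm\mu\), and \(C_\alpha^\top c(C_\alpha f_\alpha)=R_\alpha^\top\bm\lambda\), where \(c(x):=(c_e(x_e))_e\). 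It is therefore an interior critical point of the restricted problem, and the inequality constraints play no role.

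The plan is to solve this equality system with the implicit function theorem in a reduced set of variables. Set \(g=R_\alpha^\top\bm\lambda\) and \(x=C_\alpha f_\alpha\). Decompose the edges into \(\mathcal E_+\) (edges with \(c_e'(x_e)>0\) at the base point) and the rest. The obstruction is that \(f_\alpha\) need not be unique and \(c'\) may vanish, so the Jacobian of \((f_\alpha,\bm\lambda)\mapsto\bigl(R_\alpha f_\alpha-\bm\mu,\ C_\alpha^\top c(C_\alpha f_\alpha)-R_\alpha^\top\bm\lambda\bigr)\) is singular in general. To remove the nonuniqueness of \(f_\alpha\), quotient by \(\ker C_\alpha\cap\ker R_\alpha\): flows differing by such vectors give the same loads and the same demands. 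Restrict \(f_\alpha\) to a complement \(W\) of this kernel, shifted by the base solution.

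The Jacobian at the base point is
\[
J=\begin{pmatrix} R_\alpha & 0\\ C_\alpha^\top D C_\alpha & -R_\alpha^\top\end{pmatrix},\qquad D=\operatorname{diag}(c_e'(x_e^0)).
\]
It is nonsingular on \(W\times\R^n\) provided two things hold. First, \(R_\alpha\) has full row rank, which is automatic because each \(\alpha_i\neq\varnothing\) and the paths of different OD pairs are disjoint. Second, \(D^{1/2}C_\alpha h=0\) with \(R_\alpha h=0\) forces \(h\in\ker C_\alpha\). To see the first block, suppose \(J(h,\nu)=0\). Pairing the second row with \(h\) gives \(h^\top C_\alpha^\top D C_\alpha h=\nu^\top R_\alpha h=0\), so \(D^{1/2}C_\alpha h=0\). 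Then \(R_\alpha^\top\nu=C_\alpha^\top D C_\alpha h=0\), and full row rank gives \(\nu=0\).

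The second condition is the \textbf{main obstacle}. It fails exactly when there are directions that change loads only on edges with zero derivative. Along such directions \(\bm\lambda\) is constant to first order, but the map is degenerate. My first attempt is to avoid the nonsingularity requirement altogether. The alternative is to show directly that \(\bm\lambda\) is differentiable with derivative given by the solution of the quadratic program \(\min\{\tfrac12 h^\top C_\alpha^\top DC_\alpha h-\bm\nu^\top\dots\}\). Its optimal multiplier is unique, since the multiplier set is determined by \(R_\alpha^\top\nu=C_\alpha^\top DC_\alpha h^*\) and \(R_\alpha\) has full row rank. The derivative would then be \(\partial\bm\lambda/\partial\bm\mu=\bigl(R_\alpha (C_\alpha^\top DC_\alpha)^{+}R_\alpha^\top\bigr)^{\dagger}\)-type, namely the generalized inverse on the appropriate subspace, which depends continuously on \(D\) and hence on \(\bm\mu\). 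Concretely, I would write \(V_\alpha\) locally as the unconstrained-in-sign problem \(\min\{B_\alpha(f):R_\alpha f=\bm\mu\}\), using positivity, and compute its second-order expansion. If continuity of the generalized inverse breaks because the rank of \(D\) changes, I would instead apply the implicit function theorem on the reduced variables \(y=x_{\mathcal E_+}\), treating constant-cost edges as fixed. This uses the kernel-based reduction mentioned in the introduction, and I would expect to need the local constancy of \(\tau\) on those edges.

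Once \(\bm\lambda\in C^1(N)\) is established, the result follows from the product rule:
\[
D_{\bm\mu}C^{\UE}=\bm\lambda^\top+\bm\mu^\top D_{\bm\mu}\bm\lambda,
\]
which is continuous on \(N\).
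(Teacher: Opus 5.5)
You are right that everything reduces to $\bm\lambda\in C^1(N)$, i.e.\ $V\in C^2$ near $\bm\mu^0$. The paper's own proof establishes only $V=V_\alpha$ on $N$ and $\nabla V_\alpha=\bm\lambda^\alpha$, which gives continuity of $\bm\lambda$, and then asserts $C^{\UE}\in C^1(N)$. Your diagnosis therefore isolates exactly the step that argument leaves unjustified. However, your proposal does not prove that step either.

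The implicit-function route needs $\{h:\ R_\alpha h=0,\ D^{1/2}C_\alpha h=0\}\subseteq\ker C_\alpha$. This fails whenever a non-constant edge has $c_e'(x_e^0)=0$. At the base point of \cref{ex:Cr-degenerate-example}, which lies in $\intO(\Omega_\alpha)$, every edge used by $\alpha$ has zero derivative, so $C_\alpha^\top DC_\alpha=0$ and $\mathcal E_+=\varnothing$. Your fallback (implicit functions in the $\mathcal E_+$-loads with constant-cost edges frozen) is the kernel reduction behind \cref{thm:Cr_regularity_UEcost}. It requires \cref{ass:positive-constant-costs}, which is not a hypothesis here and fails in that example. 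The generalized-inverse expression is also not the derivative when $D$ is singular. For two parallel links, $R_\alpha=(1\ 1)$, $C_\alpha=I_2$, $D=\operatorname{diag}(1,0)$, it gives $\bigl(R_\alpha(C_\alpha^\top DC_\alpha)^{+}R_\alpha^\top\bigr)^{\dagger}=1$, whereas the true derivative of $\lambda$ is the parallel sum $0$. So continuity through pseudo-inverses genuinely breaks at rank changes, as you feared.

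Your second-order-expansion idea does close the gap once the right object is used. That object is the inf-projection $q_{\bm\mu}(\bm\nu):=\min\{h^\top C_\alpha^\top D(\bm\mu)C_\alpha h:\ R_\alpha h=\bm\nu\}$ with $D(\bm\mu)=\operatorname{diag}(c_e'(x_e))$. This $D(\bm\mu)$ is value-level unique: two minimizers with different loads on $e$ force $c_e$ to be constant between them, hence $c_e'=0$ at both. It is continuous in $\bm\mu$ by compactness.

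For $\bm\mu\in N$, take minimizers $f^0$ at $\bm\mu$ and $f^{\bm\nu}$ at $\bm\mu+\bm\nu$, both with support $\alpha$. Let $h(\bm\nu)$ be a linear selection of minimizers for $q_{\bm\mu}$. The trial flow $f^0+h(\bm\nu)$ is feasible for small $\bm\nu$ and gives $V(\bm\mu+\bm\nu)\le V(\bm\mu)+\bm\lambda^\top\bm\nu+\tfrac12 q_{\bm\mu}(\bm\nu)+o(|\bm\nu|^2)$. Conversely, $V(\bm\mu+\bm\nu)-V(\bm\mu)-\bm\lambda^\top\bm\nu=\sum_e\int_{x_e^0}^{x_e^{\bm\nu}}\bigl(c_e(t)-c_e(x_e^0)\bigr)\,dt$. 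Discard the nonnegative terms of edges with $c_e'(x_e^0)=0$. On the remaining edges the loads are unique at $\bm\mu$, so $x_e^{\bm\nu}\to x_e^0$. Since $R_\alpha(f^{\bm\nu}_\alpha-f^0_\alpha)=\bm\nu$, this yields the matching lower bound.

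For a convex function, such an expansion at a point implies the gradient is differentiable there, with Jacobian the matrix $Q(\bm\mu)$ of $q_{\bm\mu}$. To see this, test the subgradient inequality at $\bm\mu+\bm\nu$ along directions of length $\varepsilon|\bm\nu|$; without convexity the implication fails. Finally, $D\mapsto q_D$ is continuous on nonnegative diagonal matrices: near $D_0$ one has $(1-\delta)D_0\le D\le D_0+\delta I$, and $q_D$ is monotone in $D$. Hence $D_{\bm\mu}\bm\lambda=Q$ is continuous on $N$, and your product rule finishes the proof.
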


\begin{proof}{Proof}
	The proof is given in Appendix \ref{app:sec2-proofs}.
\end{proof}

We next consider the SO total cost. Assume that each edge cost function \(c_e\in C^1([0,+\infty))\), and that \(x\mapsto x c_e(x)\) is convex. By the SO--Beckmann reduction in the preceding subsection, with \(\bar c_e(x):=c_e(x)+x c'_e(x)\), the SO value function can be written as the Beckmann value function of the transformed network:
\[
C^{\SO}(\bm\mu)=\min_{f\in\mathcal F(\bm\mu)}\sum_{e\in\mathcal E}\int_0^{(Cf)_e}\bar c_e(t)\,dt.
\]
The Beckmann value-function regularity result therefore implies \(C^{\SO}\in C^1(U^\circ)\).

Define the regular demand region by \(\mathcal O:=\bigcup_{\alpha\subseteq\mathcal P}\intO(\Omega_\alpha)\), and then we have the following \(C^1\) result.

\begin{corollary}[\(C^1\) regularity of \(\PoA\)]
	\label{C1_regularity_of_PoA}
	Assume that each edge cost function \(c_e\in C^1([0,+\infty))\), and that \(x\mapsto x c_e(x)\) is convex. Under \cref{ass:SO_Cost_positive}, one has \(\PoA\in C^1(\mathcal O)\).
\end{corollary}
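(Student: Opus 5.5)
The corollary follows by combining \cref{thm:local-C1-CUE} with the $C^1$ regularity of the system-optimum value function, and then applying the quotient rule. Since differentiability is local, it suffices to show that every $\bm\mu^0\in\mathcal O$ has an open neighborhood on which $\PoA$ is $C^1$.

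Fix $\bm\mu^0\in\mathcal O$. By the definition of $\mathcal O$, there is an admissible active path set $\alpha$ with $\bm\mu^0\in\intO(\Omega_\alpha)$. The set $\alpha$ is admissible, meaning $\alpha_i\neq\varnothing$ for every $i$, because $\Omega_\alpha\neq\varnothing$ and each point of $\Omega_\alpha$ is a positive demand vector. Then \cref{thm:local-C1-CUE}, whose hypothesis $c_e\in C^1([0,+\infty))$ is included in the corollary's assumptions, gives an open neighborhood $N\subset U^\circ$ of $\bm\mu^0$ with $C^{\UE}\in C^1(N)$.

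On the SO side, the convexity of $x\mapsto xc_e(x)$ and $c_e\in C^1$ let us invoke \cref{prop:BMW_for_SOcost}. It identifies $C^{\SO}$ with the Beckmann value function of the network with edge costs $\bar c_e=c_e+xc_e'$.

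\begin{itemize}
\item To apply \cref{lem:value-C1} to these transformed costs, we must check its hypotheses.
\item $\bar c_e$ is continuous, since $c_e\in C^1$.
\item $\bar c_e$ is nondecreasing, since it is the derivative of the convex $C^1$ function $xc_e(x)$.
\item $\bar c_e\ge 0$, since $c_e\ge0$ and $c_e'\ge0$ (as $c_e$ is nondecreasing) for $x\ge0$.
\end{itemize}

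Hence \cref{lem:value-C1} yields $C^{\SO}\in C^1(U^\circ)$, and in particular $C^{\SO}\in C^1(N)$. Checking these hypotheses is the only step that needs care. It is needed because \cref{lem:value-C1} is stated for costs mapping into $\R_+$.

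Finally, \cref{ass:SO_Cost_positive} gives $C^{\SO}>0$ on $N$. So $\PoA=C^{\UE}/C^{\SO}$ is a quotient of $C^1$ functions with nonvanishing denominator, and is therefore $C^1$ on $N$. Its gradient is
\[
D_{\bm\mu}\PoA=\frac{D_{\bm\mu}C^{\UE}\,C^{\SO}-C^{\UE}\,D_{\bm\mu}C^{\SO}}{(C^{\SO})^2}.
\]
This expression is continuous on $N$. Since $\bm\mu^0\in\mathcal O$ was arbitrary and $\mathcal O$ is open as a union of interiors, we conclude $\PoA\in C^1(\mathcal O)$.
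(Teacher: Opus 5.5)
Your proposal is correct and follows the same route as the paper. You get local $C^1$ regularity of $C^{\UE}$ from \cref{thm:local-C1-CUE}, global $C^1$ regularity of $C^{\SO}$ from \cref{prop:BMW_for_SOcost} together with \cref{lem:value-C1}, and then the quotient rule using \cref{ass:SO_Cost_positive}. Your explicit check that $\bar c_e$ is continuous, nondecreasing and nonnegative fills in a step the paper leaves implicit.
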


When \(n=1\), the demand space reduces to \(U^\circ=(0,+\infty)\). If \(\mu^0\in\mathcal O=\bigcup_{\alpha\subseteq\mathcal P}\intO(\Omega_\alpha)\), then there exist an active path set \(\alpha\) and a neighborhood \(N\subset(0,+\infty)\) of \(\mu^0\) such that \(N\subset\Omega_\alpha\). Hence, for every demand in this neighborhood, there exists a Wardrop equilibrium flow whose support is \(\alpha\). The preceding theorem then yields \(\PoA\in C^1(N)\).

This conclusion is consistent with the local \(C^1\) regularity result of Cominetti--Dose--Scarsini for the single-OD case \cite{cds_mp2024}. In their formulation, regular demand levels correspond to demand values at which the equilibrium active path set does not change. In the notation of the present paper, such demand values are characterized by \(\mu^0\in\mathcal O\). Therefore, when specialized to \(n=1\), the theorem above yields the same local \(C^1\) behavior as their single-OD result.

Regarding the assumptions, the corresponding result of Cominetti--Dose--Scarsini typically requires the relevant edge costs to have strictly positive derivatives. By contrast, the present theorem treats value-level regularity through the Beckmann value function and the restricted Beckmann problem, and it does not require the UE-side condition \(c'_e(x)>0\) at the relevant edge loads. Thus, in the single-OD specialization, the present result applies under a broader set of conditions and can be viewed as a generalization of that single-OD local \(C^1\) regularity result.

\section{Distributional Properties of the Nondifferentiability Set}
\label{sec:nondiff}

\subsection{Exceptional Region and the Nondifferentiability Set}

We retain the notation introduced in the previous section. For each candidate active path set \(\alpha\subseteq\mathcal P\), let \(\Omega_\alpha\) denote the corresponding active-set region. Define
\[
\mathcal O:=\bigcup_{\alpha\subseteq\mathcal P}\operatorname{int}(\Omega_\alpha),
\qquad
\mathcal E:=U^\circ\setminus\mathcal O .
\]
Here \(\mathcal O\) is the regular region covered by the local \(C^1\) regularity theory developed in the preceding section, whereas \(\mathcal E\) is the exceptional region induced by switches or degeneracies of the active path set.

\begin{proposition}[First localization of nondifferentiability]
	\label{prop:first-localization}
	Under the conditions of \cref{C1_regularity_of_PoA}, \(\Nondiff(\PoA)\subseteq \mathcal E\).
\end{proposition}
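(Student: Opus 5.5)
The inclusion \(\Nondiff(\PoA)\subseteq\mathcal E\) is equivalent, by taking complements in \(U^\circ\), to the statement that \(\PoA\) is differentiable at every point of \(\mathcal O\). So the plan is to read it off directly from \cref{C1_regularity_of_PoA}, with \(\Nondiff(\PoA)\) understood as the set of \(\bm\mu\in U^\circ\) at which \(\PoA\) fails to be (Fréchet) differentiable.

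\textbf{Step 1: \(\mathcal O\) is open.} Fix \(\bm\mu^0\in\mathcal O\). By definition there is an admissible \(\alpha\subseteq\mathcal P\) with \(\bm\mu^0\in\intO(\Omega_\alpha)\). Each \(\intO(\Omega_\alpha)\) is open, and \(\mathcal O\) is a union of such sets, so \(\mathcal O\) is an open subset of \(U^\circ\). This matters because it makes \(C^1(\mathcal O)\) a genuinely local statement about an open set, rather than a statement relative to a possibly thin subset.

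\textbf{Step 2: differentiability at \(\bm\mu^0\).} By \cref{thm:local-C1-CUE}, there is an open neighborhood \(N\ni\bm\mu^0\) on which \(C^{\UE}\in C^1(N)\). Under the convexity assumption on \(x\mapsto xc_e(x)\), \cref{prop:BMW_for_SOcost} together with \cref{lem:value-C1} (applied to the costs \(\bar c_e\)) gives \(C^{\SO}\in C^1(U^\circ)\). Here \(\bar c_e\) is continuous because \(c_e\in C^1\), and it is nondecreasing because \(xc_e(x)\) is convex. \Cref{ass:SO_Cost_positive} gives \(C^{\SO}>0\) on \(N\). Hence the quotient \(\PoA=C^{\UE}/C^{\SO}\) is \(C^1\) on \(N\), and in particular differentiable at \(\bm\mu^0\). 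This is exactly the content of \cref{C1_regularity_of_PoA}, which we may simply cite.

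\textbf{Step 3: conclusion.} Every \(\bm\mu^0\in\mathcal O\) lies outside \(\Nondiff(\PoA)\). Therefore
\[
\Nondiff(\PoA)\subseteq U^\circ\setminus\mathcal O=\mathcal E .
\]

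No step is a real obstacle. The only point needing care is Step 1: differentiability at a point must come from \(C^1\) regularity on an open neighborhood, and openness of \(\mathcal O\) (equivalently, the local neighborhood \(N\) supplied by \cref{thm:local-C1-CUE}) provides exactly that.
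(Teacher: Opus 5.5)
Your proposal is correct and is the paper's own argument. The paper gives no separate proof and treats the inclusion as the complement form of \cref{C1_regularity_of_PoA}: \(\PoA\in C^1(\mathcal O)\) on the open set \(\mathcal O\) gives differentiability at every point of \(\mathcal O\), hence \(\Nondiff(\PoA)\subseteq U^\circ\setminus\mathcal O=\mathcal E\), and your additional checks (openness of \(\mathcal O\), and \(\bar c_e\) continuous and nondecreasing so that \cref{lem:value-C1} applies on the SO side) are accurate.
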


This proposition gives a one-sided localization. It asserts that nondifferentiability of \(\PoA\) can occur only in the exceptional region. However, \(\mathcal E\) is still only an active-set-level candidate exceptional region, and it need not coincide with the actual nondifferentiability set.

We next derive a measure-theoretic conclusion for the actual nondifferentiability set from the regularity of the value function. Let the Beckmann value function be
\[
V(\bm\mu):=\min_{f\in\mathcal F(\bm\mu)}B(f),
\qquad
B(f)=\sum_{e\in\mathcal E}\int_0^{(Cf)_e}c_e(t)\,dt .
\]
By the global value-function result in the previous section, \(V\) is a finite convex function on \(U^\circ\), and \(\nabla_{\bm\mu}V(\bm\mu)=\bm\lambda(\bm\mu)\). By Alexandrov's theorem \cite{alexandrov1939}, \(V\) is twice differentiable almost everywhere on \(U^\circ\); in particular, the continuous monotone map \(\bm\lambda=\nabla V\) is differentiable almost everywhere (see also Mignot's theorem on the almost-everywhere differentiability of monotone maps \cite{mignot1976}). Consequently, \(C^{\UE}(\bm\mu)=\bm\mu^\top\bm\lambda(\bm\mu)\) is also differentiable almost everywhere. Under the hypotheses of \cref{C1_regularity_of_PoA}, \(C^{\SO}\) is \(C^1\) on \(U^\circ\), and therefore \(\PoA(\bm\mu)=C^{\UE}(\bm\mu)/C^{\SO}(\bm\mu)\) is differentiable almost everywhere on \(U^\circ\).

\begin{proposition}[Measure-zero property of the nondifferentiability set]
	\label{prop:nondiff-measure-zero}
	Under the conditions of \cref{C1_regularity_of_PoA}, \(\mathcal L^n\bigl(\Nondiff(\PoA)\bigr)=0\).
\end{proposition}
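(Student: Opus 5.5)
The plan is to formalize the argument sketched just before the statement, with Alexandrov's theorem as the main tool. By \cref{lem:value-C1}, \(V\) is convex and of class \(C^1\) on the open convex cone \(U^\circ\), with \(\nabla V=\bm\lambda\). Convexity of \(V\) itself comes from the standard perturbation argument: if \(f\in\mathcal F(\bm\mu)\) and \(g\in\mathcal F(\bm\nu)\), then \(tf+(1-t)g\in\mathcal F(t\bm\mu+(1-t)\bm\nu)\), and \(B\) is convex. Alexandrov's theorem then gives a set \(A\subseteq U^\circ\) with \(\mathcal L^n(U^\circ\setminus A)=0\) such that \(V\) has a second-order expansion at every \(\bm\mu\in A\).

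The key point is that at such a \(\bm\mu\), since \(V\) is already \(C^1\), the gradient map \(\nabla V=\bm\lambda\) is itself Fréchet differentiable. Alexandrov's expansion alone only controls \(V\), not \(\nabla V\). The standard upgrade is the following: for a convex function with a second-order expansion at \(\bm\mu\), the subdifferential satisfies
\[
\partial V(\bm\mu+\bm h)\subseteq \nabla V(\bm\mu)+H\bm h+o(|\bm h|)\,\mathbb B .
\]
This is the form of Alexandrov's theorem in Rockafellar--Wets (Theorem 13.51 together with the equivalence with proto-differentiability of \(\partial V\)). Alternatively, one can appeal directly to Mignot's theorem: \(\nabla V\) is a monotone single-valued continuous map, hence differentiable a.e. Since \(\partial V=\{\nabla V\}\) here, either route gives that \(\bm\lambda\) is differentiable on \(A\). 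This step is the main obstacle, and I would cite the Rockafellar--Wets formulation rather than reprove it.

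With differentiability of \(\bm\lambda\) on \(A\) in hand, the rest follows from calculus rules. The map \(C^{\UE}(\bm\mu)=\bm\mu^\top\bm\lambda(\bm\mu)\) is a product of the smooth map \(\bm\mu\mapsto\bm\mu\) with a map differentiable on \(A\), so \(C^{\UE}\) is differentiable on \(A\). Under the hypotheses of \cref{C1_regularity_of_PoA}, \cref{prop:BMW_for_SOcost} identifies \(C^{\SO}\) with a Beckmann value function for the costs \(\bar c_e\). These costs are continuous, and they are nondecreasing because \(x c_e(x)\) is convex and \(C^1\). Applying \cref{lem:value-C1} to \(\bar c_e\) therefore gives \(C^{\SO}\in C^1(U^\circ)\). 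Finally, \cref{ass:SO_Cost_positive} guarantees \(C^{\SO}>0\), so the quotient rule shows that \(\PoA\) is differentiable on \(A\).

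Hence \(\Nondiff(\PoA)\subseteq U^\circ\setminus A\), and therefore \(\mathcal L^n(\Nondiff(\PoA))=0\). One last check is needed because \(\bar c_e\) might be negative somewhere, whereas \cref{lem:value-C1} assumes costs with values in \(\R_+\). This does not arise: \(\bar c_e(x)=c_e(x)+xc_e'(x)\ge c_e(x)\ge0\), since \(c_e'\ge0\).
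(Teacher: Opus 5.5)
Your proposal is correct and follows the paper's argument essentially step for step. Alexandrov's theorem (or Mignot's theorem) applied to the convex \(C^1\) function \(V\) gives a.e. differentiability of \(\bm\lambda=\nabla V\), hence of \(C^{\UE}=\bm\mu^\top\bm\lambda\), and dividing by the positive \(C^1\) function \(C^{\SO}\) gives a.e. differentiability of \(\PoA\). Your added checks fill in details the paper leaves implicit: the upgrade from a second-order expansion of \(V\) to differentiability of \(\nabla V\), and the verification that \(\bar c_e\) is continuous, nondecreasing and nonnegative, so that \cref{lem:value-C1} applies on the SO side.
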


This measure-zero conclusion is not obtained by proving that \(\mathcal E\) itself has measure zero. In fact, even in the simplest single-OD setting, \(\mathcal E\) may have positive measure. The following example shows that switches of the active path set on a positive-measure set do not necessarily destroy smoothness at the cost level.

\begin{example}
	\label{ex:E_positive_measure}
	Let \(K\subset[0,1]\) be a fat Cantor set containing the endpoints, with \(|K|_1>0\), and suppose that \(K\) is closed and has empty interior. Set \(F:=\frac12K\subset[0,\tfrac12]\), and write the relatively open set \([0,\tfrac12]\setminus F\) as a countable disjoint union of open intervals \([0,\tfrac12]\setminus F=\bigsqcup_{n=1}^\infty (a_n,b_n)\). Let
	\[
	\psi(t)=
	\begin{cases}
		\exp\!\bigl(-1/(t(1-t))\bigr), & t\in(0,1),\\
		0, & t\notin(0,1),
	\end{cases}
	\]
	be the standard bump function. Fix \(\delta>0\) sufficiently small, define \(\varepsilon_n:=\delta\,2^{-n}(b_n-a_n)^n\), and set
	\[
	g(x):=\sum_{n=1}^{\infty}\varepsilon_n\,\psi\!\left(\frac{x-a_n}{b_n-a_n}\right),
	\qquad x\ge0 .
	\]
	By the standard bump construction, \(g\in C^\infty([0,+\infty))\). Taking \(\delta\) sufficiently small ensures that \(\|g'\|_\infty<\frac12\). Moreover, on \([0,\tfrac12]\), \(g(x)=0\) if and only if \(x\in F\), and \(g(x)>0\) if and only if \(x\in[0,\tfrac12]\setminus F\).
	Let \(c(x):=x+g(x)\). Then \(c\in C^\infty([0,+\infty))\), and \(c'(x)=1+g'(x)>\frac12\). Thus \(c\) is strictly increasing.
	
	Consider the following single-OD network in \cref{fig:E_positivemeasure}. The nodes are \(O,v_1,v_2,D\). The edge costs are as follows: the edge \(O\to v_1\) has cost \(x\), the edge \(v_1\to D\) has cost \(c(x)\), the edge \(O\to v_2\) has cost \(c(x)\), the edge \(v_2\to D\) has cost \(x\), and the edge \(v_1\to v_2\) has cost \(0\). Denote the three \(O\)-\(D\) paths by
	\[
	p_u:=O\to v_1\to D,\qquad
	p_\ell:=O\to v_2\to D,\qquad
	p_m:=O\to v_1\to v_2\to D .
	\]
	Let \(\alpha:=\{p_u,p_\ell\}\) and \(\beta:=\{p_u,p_\ell,p_m\}\).
	
	\begin{figure}[htp!]
		\FIGURE
		{\begin{tikzpicture}[
			>=Stealth,
			node distance=2.4cm,
			vertex/.style={circle,draw,minimum size=9mm,inner sep=0pt},
			edge/.style={->,line width=0.8pt}
			]
			\node[vertex] (O) at (0,0) {$O$};
			\node[vertex] (U) at (3,1.2) {$V_1$};
			\node[vertex] (L) at (3,-1.2) {$V_2$};
			\node[vertex] (D) at (6,0) {$D$};
			\draw[edge] (O) -- node[above] {$e_1$} (U);
			\draw[edge] (U) -- node[above] {$e_2$} (D);
			\draw[edge] (O) -- node[below] {$e_3$} (L);
			\draw[edge] (L) -- node[below] {$e_4$} (D);
			\draw[edge] (U) -- node[right] {$e_5$} (L);
		\end{tikzpicture}}
		{Network used in \cref{ex:E_positive_measure}. \label{fig:E_positivemeasure}}
		{}
	\end{figure}
	
	If the path flow is \((f_u,f_\ell,f_m)\) with \(f_u+f_\ell+f_m=\mu\), then the three path costs are
	\[
	L_u=(f_u+f_m)+c(f_u),\qquad
	L_\ell=c(f_\ell)+(f_\ell+f_m),\qquad
	L_m=\mu+f_m .
	\]

	For this network, on \((0,1)\) one has
	\[
	\Omega_\alpha\cap(0,1)=K\cap(0,1),
	\qquad
	\Omega_\beta\cap(0,1)=(0,1)\setminus K .
	\]
	
	Indeed, if \(\mu\in K\cap(0,1)\), take \(s=\mu/2\in F\) and consider the flow \(f_u=f_\ell=s,\ f_m=0\). Since \(g(s)=0\), all three paths have the same cost, and the positive-flow paths are precisely \(p_u\) and \(p_\ell\). Hence \(\mu\in\Omega_\alpha\). Conversely, if a Wardrop equilibrium has positive-flow support \(\alpha\), the equal-cost condition gives \(f_u=f_\ell=\mu/2\), while the requirement that \(p_m\) is not cheaper gives \(g(\mu/2)=0\). Hence \(\mu\in K\).
	
	If \(\mu\in(0,1)\setminus K\), define \(h(s):=2s+g(s)\). Since \(h'(s)=2+g'(s)>\frac32\), \(h\) is strictly increasing. Moreover, because \(g=0\) on \(F\), we have \(h(F)=2F=K\). Hence there is a unique \(s\in[0,\tfrac12]\setminus F\) such that \(\mu=h(s)\). Set \(f_u=f_\ell=s,\qquad f_m=g(s)>0.\) Then all three paths have the same cost and all three paths carry strictly positive flow; hence \(\mu\in\Omega_\beta\). The reverse inclusion follows directly from the same Wardrop conditions \eqref{eq:wardrop-equal}--\eqref{eq:wardrop-ineq}. Within \((0,1)\), no other strictly positive path-flow support can be realized by a Wardrop equilibrium.
	
	Thus the exceptional behavior on \(K\) is indeed generated by a switch of the active path set: when \(\mu\in K\cap(0,1)\), the path \(p_m\) has the same cost as the used paths but carries zero flow; whereas at any nearby \(\nu\notin K\), the corresponding equilibrium assigns strictly positive flow \(g(H(\nu))>0\) to \(p_m\), where \(H:=h^{-1}\).
	
	Since \(K\) has empty interior and \((0,1)\setminus K\) is open,
	\[
	\mathcal E\cap(0,1)=K\cap(0,1),
	\qquad
	|\mathcal E\cap(0,1)|_1=|K\cap(0,1)|_1>0 .
	\]
	
	On the other hand, define \(\Phi(s):=2s+2g(s)\). The equilibrium cost in this example satisfies \(\lambda(\mu)=\Phi(H(\mu)), \qquad H:=h^{-1}.\) Since \(g\in C^\infty\) and \(h'(s)>0\), we have \(H\in C^\infty((0,1))\), and hence \(\lambda\in C^\infty((0,1))\).
\end{example}

This example shows that the active-set exceptional region may have positive measure. At the same time, in this very example, the equilibrium cost \(\lambda\) still satisfies \(\lambda\in C^\infty((0,1))\). Therefore, switches of the active path set provide only a candidate localization for nondifferentiability; this information alone cannot characterize the exact structure of \(\Nondiff(\PoA)\).

\subsection{Countable Manifold Cover of Nondifferentiability Points}

This subsection further characterizes the local geometric structure of the nondifferentiability set. The preceding subsection shows that nondifferentiability can occur only outside the active-set interiors. The result below further shows that, under the strictly monotone/constant edge-cost structure commonly used in regularity analysis of traffic networks, the actual nondifferentiability set is not only contained in the exceptional set, but is also covered by at most countably many local codimension-one \(C^1\) submanifolds. This cost structure allows two types of edges: congestible edges with strictly positive marginal cost, and constant-cost edges whose costs do not vary with the load.

\begin{assumption}[Strictly monotone/constant edge-cost structure]
	\label{ass:positive-constant-costs}
	For every edge \(e\in\mathcal E\), assume that \(c_e\in C^1([0,+\infty))\), and that one of the following two alternatives holds:
	\[
	c'_e(t)>0\qquad \forall t\ge0,
	\]
	or there exists a constant \(\kappa_e\in\R_+\) such that \(c_e(t)\equiv \kappa_e\) for all \(t\ge0\).
\end{assumption}

For any \(\bm\mu^0\in U^\circ\), define \(\mathcal A(\bm\mu^0):=\{\alpha\subseteq\mathcal P:\ \bm\mu^0\in\operatorname{cl}_{U^\circ}(\Omega_\alpha)\}\), where \(\operatorname{cl}_{U^\circ}\) denotes closure relative to \(U^\circ\). Since \(\mathcal P\) is finite, the set \(\mathcal A(\bm\mu^0)\) is finite.

Under \cref{ass:SO_Cost_positive,ass:positive-constant-costs}, there exists a neighborhood \(W\subset U^\circ\) of \(\bm\mu^0\) such that, for every \(\alpha\in\mathcal A(\bm\mu^0)\), there exists a \(C^1\) map \(\widehat P_\alpha:W\to\R\) satisfying \(\widehat P_\alpha(\bm\mu)=\PoA(\bm\mu)\) for all \(\bm\mu\in W\cap\Omega_\alpha\).
Hence the local behavior of \(\PoA\) near \(\bm\mu^0\) can be represented, regime by regime, by finitely many \(C^1\) extensions.

\begin{lemma}[Local extension separation at nondifferentiability points]
	\label{lem:local-extension-separation}
	Suppose that \cref{ass:SO_Cost_positive,ass:positive-constant-costs} hold, and that \(xc_e(x)\) is convex for all \(e\). If \(\bm\mu^0\in\Nondiff(\PoA)\), then there exist \(\alpha,\beta\in\mathcal A(\bm\mu^0)\) and a neighborhood \(W\subset U^\circ\) of \(\bm\mu^0\) such that the corresponding local \(C^1\) extensions \(\widehat P_\alpha,\widehat P_\beta:W\to\R\) satisfy
	\[
	\widehat P_\alpha(\bm\mu^0)=\widehat P_\beta(\bm\mu^0)=\PoA(\bm\mu^0),
	\qquad
	\nabla \widehat P_\alpha(\bm\mu^0)\neq\nabla \widehat P_\beta(\bm\mu^0).
	\]
\end{lemma}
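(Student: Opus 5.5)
We argue by contraposition, using the local extension structure stated just before the lemma. Fix \(\bm\mu^0\in U^\circ\) and take the neighborhood \(W\) together with the finitely many \(C^1\) maps \(\widehat P_\alpha\), \(\alpha\in\mathcal A(\bm\mu^0)\), with \(\widehat P_\alpha=\PoA\) on \(W\cap\Omega_\alpha\). The plan is to show that if all extensions share one gradient at \(\bm\mu^0\), then \(\PoA\) is differentiable at \(\bm\mu^0\). This contradicts \(\bm\mu^0\in\Nondiff(\PoA)\), so two extensions must have distinct gradients there.

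\emph{Step 1: shrinking the neighborhood.} First we shrink \(W\) so that it meets only regions \(\Omega_\alpha\) with \(\alpha\in\mathcal A(\bm\mu^0)\). This is possible because \(\mathcal P\) is finite: for each \(\alpha\notin\mathcal A(\bm\mu^0)\), the point \(\bm\mu^0\) lies outside \(\operatorname{cl}_{U^\circ}(\Omega_\alpha)\), so some ball around \(\bm\mu^0\) avoids \(\Omega_\alpha\). Intersecting these finitely many balls with \(W\) gives a neighborhood on which \(W=\bigcup_{\alpha\in\mathcal A(\bm\mu^0)}(W\cap\Omega_\alpha)\), using the cover \(U^\circ=\bigcup_\alpha\Omega_\alpha\).

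\emph{Step 2: equal values.} Next we check that \(\widehat P_\alpha(\bm\mu^0)=\PoA(\bm\mu^0)\) for every \(\alpha\in\mathcal A(\bm\mu^0)\). Since \(\bm\mu^0\in\operatorname{cl}(\Omega_\alpha)\), there is a sequence \(\bm\mu^j\in\Omega_\alpha\cap W\) with \(\bm\mu^j\to\bm\mu^0\). Then \(\widehat P_\alpha(\bm\mu^j)=\PoA(\bm\mu^j)\). Continuity of \(\widehat P_\alpha\), together with continuity of \(\PoA\), passes this to the limit. The continuity of \(\PoA\) comes from \cref{lem:value-C1}, which makes \(\bm\lambda\) and hence \(C^{\UE}\) continuous, from the fact that \(C^{\SO}\) is \(C^1\) by \cref{prop:BMW_for_SOcost}, and from \cref{ass:SO_Cost_positive}. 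So the value condition in the lemma holds for every pair \(\alpha,\beta\in\mathcal A(\bm\mu^0)\).

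\emph{Step 3: equal gradients force differentiability.} Suppose, for contradiction, that \(\nabla\widehat P_\alpha(\bm\mu^0)=g\) for all \(\alpha\in\mathcal A(\bm\mu^0)\). For each \(\alpha\), differentiability of \(\widehat P_\alpha\) at \(\bm\mu^0\) gives
\[
\widehat P_\alpha(\bm\mu)=\PoA(\bm\mu^0)+g^\top(\bm\mu-\bm\mu^0)+r_\alpha(\bm\mu),\qquad |r_\alpha(\bm\mu)|=o(\|\bm\mu-\bm\mu^0\|).
\]
Any \(\bm\mu\in W\) lies in some \(\Omega_\alpha\) with \(\alpha\in\mathcal A(\bm\mu^0)\), by Step 1. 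At such a point \(\PoA(\bm\mu)=\widehat P_\alpha(\bm\mu)\), so
\[
|\PoA(\bm\mu)-\PoA(\bm\mu^0)-g^\top(\bm\mu-\bm\mu^0)|\le \max_{\alpha\in\mathcal A(\bm\mu^0)}|r_\alpha(\bm\mu)|.
\]
The right-hand side is a maximum over a finite set of \(o(\|\bm\mu-\bm\mu^0\|)\) terms, so it is itself \(o(\|\bm\mu-\bm\mu^0\|)\). Hence \(\PoA\) is differentiable at \(\bm\mu^0\) with gradient \(g\), the desired contradiction. Therefore some \(\alpha,\beta\in\mathcal A(\bm\mu^0)\) have \(\nabla\widehat P_\alpha(\bm\mu^0)\neq\nabla\widehat P_\beta(\bm\mu^0)\), and by Step 2 they also satisfy the value condition.

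The argument is elementary once the extension statement preceding the lemma is granted. The only genuinely delicate point is that statement itself: constructing \(C^1\) extensions of \(C^{\UE}\) off each \(\Omega_\alpha\) by an implicit-function argument with a kernel reduction for constant-cost edges, and combining them with the \(C^1\) function \(C^{\SO}\). We take that as given from the text. Within this proof, the step needing most care is Step 1, the finiteness and closure argument that makes the regions indexed by \(\mathcal A(\bm\mu^0)\) cover a full neighborhood.
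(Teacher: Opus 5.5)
Your proposal is correct and follows the paper's proof essentially step for step. The paper likewise takes $W$ and the branches $\widehat P_\alpha$ from the auxiliary local-extension lemma, whose second part is your Step~1 exclusion of the regions $\Omega_\gamma$ with $\gamma\notin\mathcal A(\bm\mu^0)$. It then gets $\widehat P_\alpha(\bm\mu^0)=\PoA(\bm\mu^0)$ by approaching $\bm\mu^0$ through $W\cap\Omega_\alpha$, and reaches a contradiction from a first-order expansion that is uniform over the finitely many branches; the only difference is that you prove the exclusion property inline rather than citing it.
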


\begin{proof}{Proof}
	The proof is given in Appendix \ref{app:sec3-proofs}.
\end{proof}

\Cref{lem:local-extension-separation} shows that local nondifferentiability can only arise from a discrepancy between the first-order information of distinct \(C^1\) extensions. If all local extensions that agree with \(\PoA\) at \(\bm\mu^0\) have the same first-order expansion, then these local representations cannot generate nondifferentiability of \(\PoA\) at \(\bm\mu^0\). Conversely, whenever nondifferentiability occurs, there must exist two local extensions with the same value but different gradients at \(\bm\mu^0\). Thus, locally, a nondifferentiability point lies on the regular zero set of some difference \(\widehat P_\alpha-\widehat P_\beta \).

\begin{theorem}[At most countable \(C^1\) submanifold cover of the nondifferentiability set]
	\label{thm:nondiff-countable-manifold-cover}
	Under the assumptions of \cref{lem:local-extension-separation}, there exist at most countably many local \((n-1)\)-dimensional \(C^1\) submanifolds \(M_1,M_2,\dots\subset U^\circ\) such that
	\[
	\Nondiff(\PoA)\subseteq\bigcup_{\ell=1}^{\infty}M_\ell .
	\]
\end{theorem}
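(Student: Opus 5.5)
The plan is to combine \cref{lem:local-extension-separation} with the regular value theorem to get a local manifold description, and then pass to a global countable cover via Lindel\"of's theorem.

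\textbf{Local step.} Fix \(\bm\mu^0\in\Nondiff(\PoA)\). \Cref{lem:local-extension-separation} supplies a neighborhood \(W_{\bm\mu^0}\subset U^\circ\) and the finitely many local \(C^1\) extensions \(\widehat P_\alpha\), \(\alpha\in\mathcal A(\bm\mu^0)\), each agreeing with \(\PoA\) on \(W_{\bm\mu^0}\cap\Omega_\alpha\). For every pair \(\alpha,\beta\in\mathcal A(\bm\mu^0)\), set \(D_{\alpha\beta}:=\widehat P_\alpha-\widehat P_\beta\), a \(C^1\) function on \(W_{\bm\mu^0}\). Consider the set
\[
S_{\alpha\beta}:=\{\bm\mu\in W_{\bm\mu^0}:\ D_{\alpha\beta}(\bm\mu)=0,\ \nabla D_{\alpha\beta}(\bm\mu)\neq 0\}.
\]
By openness of the condition \(\nabla D_{\alpha\beta}\neq0\) and the regular value (implicit function) theorem, \(S_{\alpha\beta}\) is an embedded \((n-1)\)-dimensional \(C^1\) submanifold of \(U^\circ\). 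It is possibly empty, and it is open in the zero set of \(D_{\alpha\beta}\).

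\textbf{Main obstacle.} The key claim is that every nondifferentiability point \(\bm\nu\in W_{\bm\mu^0}\), not only \(\bm\mu^0\) itself, lies in \(\bigcup_{\alpha,\beta}S_{\alpha\beta}\). For \(\bm\nu=\bm\mu^0\) this is exactly the conclusion of the lemma. For general \(\bm\nu\), I would shrink \(W_{\bm\mu^0}\) so that \(\mathcal A(\bm\nu)\subseteq\mathcal A(\bm\mu^0)\) for all \(\bm\nu\in W_{\bm\mu^0}\). This is possible because the regions \(\Omega_\alpha\) with \(\alpha\notin\mathcal A(\bm\mu^0)\) have closures avoiding \(\bm\mu^0\), and there are finitely many of them. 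I would then rerun the argument of the lemma at \(\bm\nu\). One must check that the extensions \(\widehat P_\alpha\) built at \(\bm\mu^0\) may serve as the extensions at \(\bm\nu\). The lemma's argument only uses that \(\PoA\) coincides, near the point, with finitely many \(C^1\) functions on sets covering a neighborhood; the recycled functions satisfy this because the sets \(W_{\bm\mu^0}\cap\Omega_\alpha\), \(\alpha\in\mathcal A(\bm\mu^0)\), cover \(W_{\bm\mu^0}\).

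The elementary fact behind this is the following. Suppose near \(\bm\nu\) one has \(\PoA=\widehat P_\alpha\) on \(\Omega_\alpha\), and all \(\widehat P_\alpha\) with \(\bm\nu\in\operatorname{cl}\Omega_\alpha\) share one value and one gradient at \(\bm\nu\). Then \(\PoA\) is differentiable at \(\bm\nu\), since the remainder is \(o(|\bm\mu-\bm\nu|)\) uniformly over the finitely many pieces. Moreover, the values agree automatically at \(\bm\nu\) by continuity of \(\PoA\), which follows from \cref{lem:value-C1} and \(C^{\SO}\in C^1\). Hence nondifferentiability at \(\bm\nu\) forces some pair with \(\widehat P_\alpha(\bm\nu)=\widehat P_\beta(\bm\nu)\) and \(\nabla\widehat P_\alpha(\bm\nu)\neq\nabla\widehat P_\beta(\bm\nu)\), that is, \(\bm\nu\in S_{\alpha\beta}\). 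I expect this uniformity to be the delicate point, so I would state it as a separate sublemma.

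\textbf{Global step.} The sets \(\{W_{\bm\mu^0}\}_{\bm\mu^0\in\Nondiff(\PoA)}\) form an open cover of \(\Nondiff(\PoA)\). Since \(U^\circ\subset\R^n\) is second countable, \(\Nondiff(\PoA)\) is Lindel\"of, so countably many neighborhoods \(W_{\bm\mu^j}\), \(j\in\mathbb N\), already cover it. Each \(W_{\bm\mu^j}\) contributes finitely many submanifolds \(S^{(j)}_{\alpha\beta}\), one for each pair \(\alpha,\beta\in\mathcal A(\bm\mu^j)\), of which there are at most \(|2^{\mathcal P}|^2\). Enumerating all the nonempty ones as \(M_1,M_2,\dots\) gives a countable family of local \((n-1)\)-dimensional \(C^1\) submanifolds with \(\Nondiff(\PoA)\subseteq\bigcup_\ell M_\ell\), which is the statement of \cref{thm:nondiff-countable-manifold-cover}.
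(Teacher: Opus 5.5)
Your proposal is correct and follows the paper's proof essentially step for step. The paper likewise takes the regular coincidence sets $\{\widehat P_\alpha-\widehat P_\beta=0,\ \nabla(\widehat P_\alpha-\widehat P_\beta)\neq 0\}$ inside each branch neighborhood $W_{\bar{\bm\mu}}$, and it shows that every point $\bm\nu\in W_{\bar{\bm\mu}}\cap\Nondiff(\PoA)$ lies in one of these sets, using the same ingredients you name: $\mathcal A(\bm\nu)\subseteq\mathcal A(\bar{\bm\mu})$, continuity of $\PoA$ for the value identity, and the uniform first-order expansion over finitely many branches. It then concludes with a Lindel\"of countable subcover. The ``sublemma'' you propose to isolate is exactly the local containment claim that the paper proves inline.
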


\begin{proof}{Proof}
	The proof is given in Appendix \ref{app:sec3-proofs}.
\end{proof}

\Cref{thm:nondiff-countable-manifold-cover} provides a geometric characterization of the nondifferentiability set of \(\PoA\) in \(U^\circ\). This result is consistent with the breakpoint results in the one-dimensional demand theory: in the single-OD case, active-set transitions appear as isolated breakpoints on the demand axis; in the one-parameter multi-OD case, the nondifferentiability set of the one-dimensional parametrized problem is contained in the intersection between the demand curve and these local \((n-1)\)-dimensional \(C^1\) submanifolds. Thus, the breakpoints in the existing one-dimensional theory can be viewed as traces of local \((n-1)\)-dimensional \(C^1\) submanifolds in \(U^\circ\) along lower-dimensional demand paths.

\section{Higher-Order Regularity: Obstructions and Sufficient Conditions}
\label{sec:higher-order}

\subsection{\texorpdfstring{\(C^1\)}{C1} Regularity versus \texorpdfstring{\(C^r\)}{Cr} Regularity}

The local \(C^1\) result in \cref{sec:model} is based on the convex-duality structure of the restricted Beckmann value function. More precisely, if \(\bm\mu^0\in\intO(\Omega_\alpha)\), then, in a neighborhood of \(\bm\mu^0\),
\[
V(\bm\mu)=V_\alpha(\bm\mu),
\qquad
D_{\bm\mu}V_\alpha(\bm\mu)=\bm\lambda^\alpha(\bm\mu).
\]
Thus the equilibrium cost vector of the original network locally coincides with that of the restricted network, and \(C^{\UE}(\bm\mu)=\bm\mu^\top\bm\lambda(\bm\mu)\in C^1\).
This conclusion only yields the continuity of \(\bm\lambda\); it does not imply the existence or continuity of
\(D\bm\lambda,D^2\bm\lambda,\dots,D^{r-1}\bm\lambda\). In fact, even when the demand point lies in the interior of an active-set region and all edge costs are smooth, the equilibrium cost need not be \(C^r\). The following example exhibits this phenomenon for \(r=3\).

\begin{example}
	\label{ex:Cr-degenerate-example}
	
	Consider the network shown in \cref{fig:Cr-degenerate-network}. It has three OD pairs, namely \((O,D)\), \((O,M)\), and \((M,D)\). The corresponding path sets are
	\[
	\mathcal P_1=\{p_{11},p_{12}\},\quad
	p_{11}=e_1\cup e_2,\quad p_{12}=e_3,
	\qquad
	\mathcal P_2=\{p_{21},p_{22}\},\quad
	p_{21}=e_1,\quad p_{22}=e_4,
	\]
	\[
	\mathcal P_3=\{p_{31},p_{32}\},\quad
	p_{31}=e_2,\quad p_{32}=e_5.
	\]
	The edge cost functions are
	\[
	c_1(t)=c_2(t)=(t-2)^3+8,\qquad
	c_3(t)=(t-1)^3+16,\qquad
	c_4(t)\equiv c_5(t)\equiv 100.
	\]
	
	\begin{figure}[htbp]
		\FIGURE
		{\begin{tikzpicture}[
			vertex/.style={circle, draw, thick, minimum size=0.9cm, inner sep=0pt},
			edge/.style={-Stealth, thick},
			elab/.style={font=\normalsize},
			clab/.style={font=\normalsize}
			]
			\node[vertex] (O) at (0,0) {\(O\)};
			\node[vertex] (M) at (3.6,1.25) {\(M\)};
			\node[vertex] (D) at (7.2,0) {\(D\)};
			\draw[edge] ($(O)+(0.42,0.10)$) -- node[above, pos=0.50, elab] {\(e_1\)} ($(M)+(-0.42,0.10)$);
			\draw[edge] ($(O)+(0.42,-0.10)$) -- node[below, pos=0.50, elab] {\(e_4\)} ($(M)+(-0.42,-0.10)$);
			\draw[edge] ($(M)+(0.42,0.10)$) -- node[above, pos=0.50, elab] {\(e_2\)} ($(D)+(-0.42,0.10)$);
			\draw[edge] ($(M)+(0.42,-0.10)$) -- node[below, pos=0.50, elab] {\(e_5\)} ($(D)+(-0.42,-0.10)$);
			\draw[edge] ($(O)+(0.43,-0.27)$) -- node[below, pos=0.50, elab] {\(e_3\)} ($(D)+(-0.43,-0.27)$);
		\end{tikzpicture}}
		{Network used in \cref{ex:Cr-degenerate-example}. \label{fig:Cr-degenerate-network}}
		{}
	\end{figure}
	
	Set \(\bm\mu^0=(2,1,1)\), and consider, at this demand, the path flow \(f_{11}^0=f_{12}^0=1\), \(f_{21}^0=1\), \(f_{31}^0=1\), and \(f_{22}^0=f_{32}^0=0\).
	Then \(p_{11}\) and \(p_{12}\) have the same equilibrium cost for \(\mathrm{OD}_1\), whereas \(p_{21}\) and \(p_{31}\) are strictly cheaper than \(p_{22}\) and \(p_{32}\), respectively. Hence the active path set at the base point is \(\alpha=\{p_{11},p_{12},p_{21},p_{31}\}\).

	Consider perturbations of \(\bm\mu^0\) of the form \(\bm\mu=(2+\xi,1+\eta_1,1+\eta_2)\).
	Write
	\[
	f_{11}=1+\frac{\xi}{2}+y,
	\qquad
	f_{12}=1+\frac{\xi}{2}-y,
	\qquad
	f_{21}=1+\eta_1,
	\qquad
	f_{31}=1+\eta_2,
	\]
	and keep \(f_{22}=f_{32}=0\).
	The corresponding edge loads are
	\[
	x_1=2+\frac{\xi}{2}+y+\eta_1,
	\qquad
	x_2=2+\frac{\xi}{2}+y+\eta_2,
	\qquad
	x_3=1+\frac{\xi}{2}-y .
	\]
	When the active set \(\alpha\) is fixed, the Wardrop condition for the first OD pair is \(c_1(x_1)+c_2(x_2)=c_3(x_3)\), or equivalently,
	\begin{equation}
		\left(\frac{\xi}{2}+y+\eta_1\right)^3
		+
		\left(\frac{\xi}{2}+y+\eta_2\right)^3
		-
		\left(\frac{\xi}{2}-y\right)^3
		=0.
		\label{eq:Cr-degenerate-full}
	\end{equation}
	For all sufficiently small \((\xi,\eta_1,\eta_2)\), equation \eqref{eq:Cr-degenerate-full} has a unique real root in \(y\), denoted by \(y=y(\xi,\eta_1,\eta_2)\) with \(y(0,0,0)=0\).
	By continuity, in a sufficiently small neighborhood one has \(f_{11}>0\), \(f_{12}>0\), \(f_{21}>0\), and \(f_{31}>0\).
	Moreover, the costs of \(p_{22}\) and \(p_{32}\) remain strictly larger than those of \(p_{21}\) and \(p_{31}\), respectively. Therefore \(\bm\mu^0\in\intO(\Omega_\alpha)\).
	
	If \(\lambda_1\) were \(C^3\) at \(\bm\mu^0\), then its restriction to any affine subspace would also be \(C^3\). It is therefore enough to examine regularity on the subspace \(\xi=0\). On this subspace, \eqref{eq:Cr-degenerate-full} reduces to
	\begin{equation}
		(y+\eta_1)^3+(y+\eta_2)^3+y^3=0.
		\label{eq:Cr-degenerate-slice}
	\end{equation}
	\Cref{eq:Cr-degenerate-slice} is homogeneous of degree three; hence its unique root satisfies \(y(t\eta_1,t\eta_2)=t\,y(\eta_1,\eta_2)\).

	The equilibrium cost of the first OD pair is
	\begin{equation}
	\lambda_1(\eta_1,\eta_2)=c_3(1-y(\eta_1,\eta_2))=16-y(\eta_1,\eta_2)^3 .
	\label{eq:Cr-lambda1}
	\end{equation}
	Let \(\Lambda(\eta_1,\eta_2):=\lambda_1(\eta_1,\eta_2)-16\).
	By \eqref{eq:Cr-lambda1} and the degree-one homogeneity of \(y\) established above, \(\Lambda\) is homogeneous of degree three. If \(\lambda_1\) were \(C^3\) at the origin, then \(\Lambda\) would have to be a homogeneous cubic polynomial. By symmetry in \(\eta_1\) and \(\eta_2\), and by evaluating along the directions \((1,0)\), \((1,1)\), and \((2,1)\), one directly verifies that no such homogeneous cubic polynomial can represent \(\Lambda\). Hence \(\lambda_1\) is not \(C^3\) at the base point.
\end{example}

This example shows that, even when all relevant active edge costs are \(C^\infty\) and strictly increasing, and even when
\(\bm\mu^0\) lies in the interior of an active-set region, the equilibrium cost vector need not have higher-order regularity. Consequently, the assumptions used in \cref{sec:model} to establish local \(C^1\) regularity are insufficient to imply local \(C^r\) regularity.

\subsection{\(C^r\) Regularity in the Multivariate Setting}

The example in the preceding subsection shows that the \(C^1\) regularity result in \cref{sec:model} cannot be automatically upgraded to \(C^r\) regularity solely from the higher-order smoothness of the edge cost functions. In this subsection, we consider a common class of traffic networks in which each edge cost function either has a strictly positive derivative on its domain or is constant; equivalently, the network satisfies \cref{ass:positive-constant-costs}. For this class of networks, we establish cellwise \(C^r\) regularity of the equilibrium cost vector, the UE cost, the SO cost, and \(\PoA\) in the multivariate demand setting.

\begin{theorem}[Cellwise \(C^r\) regularity of the UE cost]
	\label{thm:Cr_regularity_UEcost}
	Suppose that \(r\ge1\), and that \(c_e\in C^r([0,+\infty))\) for every edge \(e\). Under \cref{ass:positive-constant-costs}, if \(\bm\mu^0\in\intO(\Omega_\alpha)\), then there exists a neighborhood \(V\subset U^\circ\) of \(\bm\mu^0\) such that
	\[
	\bm\tau(\bm\mu)\in C^r(V),\qquad
	\bm\lambda(\bm{\mu})\in C^r(V),\qquad
	C^{\UE}(\bm{\mu})\in C^r(V).
	\]
\end{theorem}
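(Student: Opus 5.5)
We prove \cref{thm:Cr_regularity_UEcost} with an implicit-function argument on the restricted Wardrop system for the fixed active set \(\alpha\). Since \(\bm\mu^0\in\intO(\Omega_\alpha)\), there is an open neighborhood \(N\) of \(\bm\mu^0\) on which, for every \(\bm\mu\in N\), some Beckmann minimizer has support exactly \(\alpha\). Moreover, \(\bm\lambda(\bm\mu)=\bm\lambda^\alpha(\bm\mu)\) and \(\bm\tau(\bm\mu)\) coincide with the restricted objects there. Split the edge set into congestible edges \(\mathcal E_+\) (with \(c_e'>0\)) and constant edges \(\mathcal E_0\). On \(\mathcal E_0\) we have \(\tau_e\equiv\kappa_e\), which is trivially \(C^\infty\). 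So it suffices to show that the loads \(x_e\), \(e\in\mathcal E_+\), are \(C^r\) in \(\bm\mu\). Indeed, \(\tau_e=c_e(x_e)\) is then \(C^r\). Next, \(\lambda_i=\sum_{e\in p}\tau_e\) for any fixed \(p\in\alpha_i\), which is valid on all of \(N\). Finally, \(C^{\UE}=\bm\mu^\top\bm\lambda\) is then also \(C^r\).

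The key point is uniqueness. Under \cref{ass:positive-constant-costs}, the uniqueness argument for edge costs recalled in \cref{sec:model} gives uniqueness of the \emph{loads} on \(\mathcal E_+\): strict monotonicity of \(c_e\) forces \(x_e=y_e\). Let \(C_\alpha^+\) denote the rows of \(C_\alpha\) indexed by \(\mathcal E_+\). Restricted equilibria are the solutions of
\[
R_\alpha f_\alpha=\bm\mu,\qquad (C_\alpha^+)^\top c_+(C_\alpha^+f_\alpha)+(C_\alpha^0)^\top\kappa-R_\alpha^\top\bm\lambda=\bmzero ,
\]
together with \(f_\alpha>\bmzero\). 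Here the path flow \(f_\alpha\) may be nonunique along \(\ker\begin{pmatrix}R_\alpha\\ C_\alpha^+\end{pmatrix}\), and this nonuniqueness is exactly what a naive implicit-function theorem cannot handle. This is the kernel-based reduction. Let \(L\) be a complement of \(K:=\ker R_\alpha\cap\ker C_\alpha^+\) in \(\R^{|\alpha|}\). Fix a base equilibrium \(f^0_\alpha>\bmzero\) at \(\bm\mu^0\), and seek solutions of the form \(f_\alpha=f^0_\alpha+w\) with \(w\in L\). Then \(f_\alpha>0\) holds automatically near \(\bm\mu^0\) by continuity.

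The main step, and the expected obstacle, is nondegeneracy of the Jacobian of the map
\[
F(w,\bm\lambda;\bm\mu)=\Bigl(R_\alpha(f^0_\alpha+w)-\bm\mu,\ \ \Pi\bigl[(C_\alpha^+)^\top c_+(C_\alpha^+(f^0_\alpha+w))+(C_\alpha^0)^\top\kappa-R_\alpha^\top\bm\lambda\bigr]\Bigr),
\]
where \(\Pi\) is a suitable projection onto \(L^*\cong K^\perp\). The cost equation automatically lies in \(K^\perp\) once \(\bm\lambda\) is fixed, because both \((C_\alpha^+)^\top(\cdot)\) and \(R_\alpha^\top(\cdot)\) take values in \(K^\perp\). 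The \((w,\bm\lambda)\)-Jacobian is the saddle-point matrix
\[
\begin{pmatrix} (C_\alpha^+)^\top D\,C_\alpha^+|_L & -R_\alpha^\top\\ R_\alpha|_L & 0\end{pmatrix},
\qquad D=\operatorname{diag}(c_e'(x_e^0))_{e\in\mathcal E_+}\succ0 .
\]
I would show its kernel is trivial. Suppose \(R_\alpha w=0\) and \((C_\alpha^+)^\top DC_\alpha^+w=R_\alpha^\top\bm\lambda\). Pairing with \(w\) gives \(w^\top (C_\alpha^+)^\top DC_\alpha^+w=0\), so \(C^+_\alpha w=0\). Hence \(w\in K\cap L=\{0\}\), and then \(R_\alpha^\top\bm\lambda=0\). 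Since every \(\alpha_i\neq\varnothing\) and the paths of different OD pairs are disjoint, \(R_\alpha\) has full row rank \(n\), so \(\bm\lambda=0\). The dimension count must also be checked: \(\dim L+n\) should equal the number of equations after projection, which holds because \(K^\perp=\operatorname{range}\bigl((C_\alpha^+)^\top\bigr)+\operatorname{range}(R_\alpha^\top)\). I expect this bookkeeping to be the delicate part, especially when constant-cost edges make \(K\) large, and I would write it carefully as a linear-algebra lemma.

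Since \(c_+\in C^r\), the implicit function theorem now yields \(C^r\) maps \(w(\bm\mu)\) and \(\bm\lambda(\bm\mu)\) on a neighborhood \(V\subset N\). To conclude, identify this branch with the true equilibrium. On \(V\), the branch satisfies \(f_\alpha>0\) and the restricted Wardrop equalities. Any restricted equilibrium has the same \(\mathcal E_+\)-loads and the same \(\bm\lambda^\alpha\), by the uniqueness above together with \(R_\alpha\) having full row rank. On \(V\subset N\), the restricted equilibrium values coincide with the original \(\bm\tau\) and \(\bm\lambda\). Hence \(x_e(\bm\mu)=(C^+_\alpha(f^0_\alpha+w(\bm\mu)))_e\) and \(\bm\lambda(\bm\mu)\) are \(C^r\) on \(V\), which completes the argument sketched in the first paragraph.
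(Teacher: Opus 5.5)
Your proof is essentially the paper's, written in saddle-point form rather than null-space form. The paper parametrizes \(\ker R_\alpha\) by \(f_\alpha=T_\alpha y+S_\alpha\bm\mu\) and quotients out \(\ker(C_{\alpha,+}T_\alpha)\); the image of that kernel under \(T_\alpha\) is exactly your \(K=\ker R_\alpha\cap\ker C_\alpha^+\). It then applies the implicit function theorem to the positive definite reduced Hessian and recovers \(\bm\lambda\) from \(\bm\tau\) afterwards. You keep \(\bm\lambda\) as an unknown and invert the KKT matrix, which gives \(\bm\lambda\in C^r\) directly. The identification with the true equilibrium, via uniqueness of the \(\mathcal E_+\)-loads, is the same in both.

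One step is justified incorrectly. The residual \((C_\alpha^+)^\top c_+(\cdot)+(C_\alpha^0)^\top\kappa-R_\alpha^\top\bm\lambda\) contains the constant term \((C_\alpha^0)^\top\kappa\). This term lies in neither \(\operatorname{range}((C_\alpha^+)^\top)\) nor \(\operatorname{range}(R_\alpha^\top)\), so your stated reason does not show that the residual lies in \(K^\perp\). This matters: if the residual had a nonzero \(K\)-component, a zero of your projected map \(F\) would not satisfy the restricted Wardrop equalities, and your final identification step would fail.

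The missing line is the one the paper makes explicit. Pair the base-point equilibrium equation with any \(v\in K\): the \(\mathcal E_+\) term and the \(R_\alpha\) term vanish, leaving \(\kappa^\top C_\alpha^0 v=0\). Hence \((C_\alpha^0)^\top\kappa\in K^\perp\), and this holds precisely because \(f_\alpha^0\) is an equilibrium. It is false in general, e.g.\ for two single-edge paths of one OD pair with different constant costs.

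By contrast, the dimension count you flag as delicate is automatic, since \(\dim K^\perp=|\alpha|-\dim K=\dim L\).
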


\begin{proof}{Proof}
	The proof is given in Appendix \ref{app:sec4-proofs}.
\end{proof}

On the basis of \cref{thm:Cr_regularity_UEcost}, the SO side can be handled through the transformed network introduced in \cref{prop:BMW_for_SOcost}. The transformed edge costs \(\bar c_e(x)=c_e(x)+x c'_e(x)\) belong to \(C^{r-1}([0,+\infty))\). Assume in addition that, for every edge \(e\) whose cost function is not constant, \((xc_e(x))''>0\) for all \(x\geq0\). Then the transformed network satisfies \cref{ass:positive-constant-costs}: for such an edge, \(\bar c'_e(x)=(xc_e(x))''>0\) for all \(x\geq0\), while for a constant-cost edge, \(\bar c_e\) remains the same constant. Hence, for any \(\bm\mu^0\in\intO(\widetilde\Omega_\beta)\), where \(\widetilde\Omega_\beta\) denotes the region induced by the active path set \(\beta\) in the transformed network, \cref{thm:Cr_regularity_UEcost} applies to the transformed network with regularity \(r-1\). It follows that there exists a neighborhood \(V\subset U^\circ\) of \(\bm\mu^0\) such that the equilibrium cost vector of the transformed network belongs to \(C^{r-1}(V)\).

On the other hand, \(C^{\SO}\) is precisely the Beckmann value function of the transformed network. By the value-function differentiability result in \cref{sec:model}, \(D_{\bm\mu}C^{\SO}(\bm\mu)\) is equal to the equilibrium cost vector of the transformed network. Therefore, \(C^{\SO}\in C^r(V)\).

Then the following result can be derived.

\begin{corollary}[Cellwise \(C^r\) regularity of \(\PoA\)]
	\label{cor:cr-poa}
	Suppose that \(r\ge2\), and that \(c_e\in C^r([0,+\infty))\) for every edge \(e\). Assume that the original network satisfies \cref{ass:positive-constant-costs}, and that, for every edge \(e\) whose cost function is not constant, \((xc_e(x))''>0\) for all \(x\geq0\).
	If \(\bm\mu^0\in\intO(\Omega_\alpha)\cap\intO(\widetilde\Omega_\beta)\), then there exists a neighborhood \(V\subset U^\circ\) of \(\bm\mu^0\) such that \(\PoA\in C^r(V)\).
\end{corollary}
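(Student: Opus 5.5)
The plan is to obtain \(C^r\) regularity of the numerator and the denominator separately on a common neighborhood, and then take the quotient.

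\emph{Numerator.} Since \(\bm\mu^0\in\intO(\Omega_\alpha)\), the edge costs are \(C^r\), and \cref{ass:positive-constant-costs} holds for the original network, \cref{thm:Cr_regularity_UEcost} gives a neighborhood \(V_1\subset U^\circ\) of \(\bm\mu^0\) with \(C^{\UE}\in C^r(V_1)\).

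\emph{Denominator.} I will follow the discussion preceding the corollary and verify it carefully.
\begin{enumerate}
\item The assumption \((xc_e(x))''>0\) for every nonconstant edge makes \(x\mapsto xc_e(x)\) convex. For a constant edge, \(xc_e(x)=\kappa_e x\) is linear, hence also convex. Therefore \cref{prop:BMW_for_SOcost} applies, and \(C^{\SO}\) is the Beckmann value function of the transformed network with costs \(\bar c_e=c_e+xc_e'\).
\item Because \(c_e\in C^r\) with \(r\ge2\), we have \(\bar c_e\in C^{r-1}\) with \(r-1\ge1\). Moreover, \(\bar c_e\) is continuous and nondecreasing (indeed \(\bar c_e'=(xc_e)''>0\)), and \(\bar c_e\ge0\) since \(c_e\ge0\) and \(c_e'\ge0\). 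So the transformed network also satisfies \cref{ass:positive-constant-costs}: nonconstant edges have \(\bar c_e'>0\), and constant edges keep \(\bar c_e\equiv\kappa_e\). This is exactly where \(r\ge2\) is needed, because \cref{thm:Cr_regularity_UEcost} requires regularity index at least \(1\).
\item Applying \cref{thm:Cr_regularity_UEcost} to the transformed network at \(\bm\mu^0\in\intO(\widetilde\Omega_\beta)\) gives a neighborhood \(V_2\) on which the transformed OD-cost vector \(\widetilde{\bm\lambda}\) is \(C^{r-1}\).
\item \cref{lem:value-C1}, applied to the transformed costs (which are continuous, nondecreasing, and nonnegative), gives \(D_{\bm\mu}C^{\SO}=\widetilde{\bm\lambda}\) on \(U^\circ\). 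A \(C^1\) function whose gradient is \(C^{r-1}\) is \(C^r\), so \(C^{\SO}\in C^r(V_2)\).
\end{enumerate}

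\emph{Quotient.} Set \(V:=V_1\cap V_2\), an open neighborhood of \(\bm\mu^0\). By \cref{ass:SO_Cost_positive}, \(C^{\SO}>0\) on \(V\). Since \(t\mapsto1/t\) is smooth on \((0,\infty)\), the function \(1/C^{\SO}\) is \(C^r\) on \(V\). Hence \(\PoA=C^{\UE}\cdot(1/C^{\SO})\in C^r(V)\) as a product of \(C^r\) functions.

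\emph{Main obstacle.} The only delicate step is the gradient bookkeeping on the SO side. \cref{thm:Cr_regularity_UEcost} applied to the transformed network yields only \(C^{r-1}\) for its equilibrium objects, including its own "UE cost" \(\bm\mu^\top\widetilde{\bm\lambda}\), which is \emph{not} \(C^{\SO}\). The full \(C^r\) regularity of \(C^{\SO}\) must therefore come from the gradient identity of \cref{lem:value-C1} applied to the transformed Beckmann value function. I would also check that the active-set region \(\widetilde\Omega_\beta\) is defined using Beckmann minimizers of the transformed problem, so that \cref{thm:Cr_regularity_UEcost} applies verbatim.
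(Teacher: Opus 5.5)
Your proposal is correct and follows the paper's own route. The paper also gets $C^{\UE}\in C^r$ from \cref{thm:Cr_regularity_UEcost}, applies that theorem with index $r-1$ to the transformed network (which satisfies \cref{ass:positive-constant-costs}), lifts $C^{\SO}$ to $C^r$ through the gradient identity of \cref{lem:value-C1}, and takes the quotient using $C^{\SO}>0$. Your added checks — convexity of $xc_e(x)$ on constant edges, nonnegativity and monotonicity of $\bar c_e$, and the warning that $\bm\mu^\top\widetilde{\bm\lambda}\neq C^{\SO}$ — make explicit what the paper leaves implicit.
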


\begin{remark}[Covering structure of higher-order singular points]
	\label{rmk:Cr-nondiff-manifold}
	For \(s\ge1\), define
	\[
	Q_s:=\{\bm\mu\in U^\circ:\ \PoA\text{ is not }C^s\text{ on any neighborhood of }\bm\mu\}.
	\]
	The at most countable family of local \((n-1)\)-dimensional \(C^1\) manifolds used in \cref{sec:nondiff} to cover \(Q_1\) comes from the active-set exceptional region of the original network; at the cost level, the corresponding source of failure is the nondifferentiability of \(\bm\lambda\). For \(Q_2\), one must also include the corresponding family associated with the transformed network. Indeed, \(C^{\SO}\) is the Beckmann value function of the transformed network, and \(D_{\bm\mu}C^{\SO}\) is given by the equilibrium cost vector of the transformed network. Hence the nondifferentiability of \(D_{\bm\mu}C^{\SO}\) is controlled by the active-set exceptional region of the transformed network.
	
	Under the \(C^r\) conditions in this subsection, if \(\bm\mu^0\in\intO(\Omega_\alpha)\cap\intO(\widetilde\Omega_\beta)\), then \(\PoA\) is \(C^r\) on some neighborhood of \(\bm\mu^0\). Consequently, for every \(2\le s\le r\),
	\[
	Q_s\subseteq\left(\bigcup_{\ell\ge1}M_\ell^{\UE}\right)\cup\left(\bigcup_{\ell\ge1}M_\ell^{\SO}\right),
	\]
	where \(\{M_\ell^{\UE}\}_{\ell\ge1}\) is the covering family obtained in \cref{thm:nondiff-countable-manifold-cover} from the original network, and \(\{M_\ell^{\SO}\}_{\ell\ge1}\) is from the transformed network. Thus, when \(s>2\), higher-order singular points do not require any additional family of covering manifolds.
\end{remark}

\section{Affine Networks and a Boundary Maximum Principle for the PoA}
\label{sec:affine}

\subsection{Affine Cells and Polyhedral Exceptional Sets}

We now specialize the preceding theory to the affine-cost setting. Assume that every edge cost is of the form
\(c_e(x)=a_ex+b_e\), where \(a_e,b_e\ge0\). Let
\(A:=\operatorname{diag}(a_e)_{e\in\mathcal E}\) and
\(\bm b:=(b_e)_{e\in\mathcal E}\). Then
\[
B(f)=\frac12(Cf)^\top A(Cf)+\bm b^\top Cf,
\qquad
\Cost(f)=(Cf)^\top A(Cf)+\bm b^\top Cf .
\]
The SO problem is equivalently the Wardrop problem on the transformed network whose edge costs are given by \(\bar c_e(x)=c_e(x)+xc'_e(x)=2a_ex+b_e\).
Thus affine costs retain their affine structure on both the UE side and the SO side: an edge is of positive-slope type when \(a_e>0\), and of constant type when \(a_e=0\). The cellwise \(C^r\) regularity results established above therefore yield \(C^\infty\) regularity on active-set interiors in the affine case.

Fix a UE active path set \(\alpha\). After deleting all paths outside \(\alpha\), the KKT system for the UE problem is
\[
C_\alpha^\top(AC_\alpha f_\alpha+\bm b)=R_\alpha^\top\bm\lambda,
\qquad
R_\alpha f_\alpha=\bm\mu .
\]
For a fixed active path set, this system is linear in \((f_\alpha,\bm\lambda,\bm\mu)\). Hence, on the corresponding affine cell, \(\bm\lambda(\bm\mu)\) is affine in \(\bm\mu\), and consequently
\(C^{\UE}(\bm\mu)=\bm\mu^\top\bm\lambda(\bm\mu)\) is quadratic in \(\bm\mu\). The same argument applied to the transformed network, with \(a_e\) replaced by \(2a_e\), shows that \(C^{\SO}(\bm\mu)\) is also quadratic on the corresponding SO affine cell. Hence, on any UE/SO joint affine cell, \(\PoA\) is the quotient of two quadratic functions.

We next describe the geometry of the exceptional sets in the affine case. Define
\[
\mathcal O^{\UE}:=\bigcup_\alpha\intO(\Omega_\alpha),
\qquad
\mathcal E^{\UE}:=U^\circ\setminus\mathcal O^{\UE},
\]
where \(\Omega_\alpha\) is the region induced by the UE active path set \(\alpha\) in the original network. For the transformed network, define analogously
\[
\mathcal O^{\SO}:=\bigcup_\beta\intO(\widetilde\Omega_\beta),
\qquad
\mathcal E^{\SO}:=U^\circ\setminus\mathcal O^{\SO},
\]
where \(\widetilde\Omega_\beta\) is the region induced by the active path set \(\beta\) in the transformed network. The sets \(\mathcal E^{\UE}\) and \(\mathcal E^{\SO}\) are the active-set exceptional sets associated with the original network and the transformed network, respectively.

A subset \(P\subset\R^n\) is called a polyhedral cell if it is the relative interior of a nonempty convex polyhedron. Equivalently, after folding the implicit equalities into the equality system, there exist finitely many affine equalities and strict affine inequalities such that \(P=\{\bm\mu\in\R^n:G\bm\mu=g,\ D\bm\mu<d\}\).

\begin{proposition}[Polyhedral structure of exceptional sets in the affine case]
	\label{prop:affine-exceptional-polyhedral}
	Assume that every edge cost is affine. Then both \(\mathcal E^{\UE}\) and \(\mathcal E^{\SO}\) are finite unions of polyhedral cells of dimension at most \(n-1\). Moreover,
	\[
	\mathcal E^{\UE}=2\mathcal E^{\SO}:=\{2\bm\mu:\bm\mu\in\mathcal E^{\SO}\}.
	\]
\end{proposition}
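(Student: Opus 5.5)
The plan has two parts. First I would show that each region $\Omega_\alpha$ is a finite union of polyhedral cells. Then I would compare the UE and SO problems by an explicit scaling.

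\emph{Polyhedral structure.} For affine costs, $\bm\mu\in\Omega_\alpha$ holds iff there exist $(f_\alpha,\bm\lambda)$ with
\[
f_\alpha>\bmzero,\qquad R_\alpha f_\alpha=\bm\mu,\qquad C_\alpha^\top(AC_\alpha f_\alpha+\bm b)=R_\alpha^\top\bm\lambda,
\]
\[
C_q^\top(AC_\alpha f_\alpha+\bm b)\ge\lambda_i\quad(q\in\mathcal P_i\setminus\alpha_i).
\]
This is a system of linear equalities, strict inequalities and weak inequalities in $(f_\alpha,\bm\lambda,\bm\mu)$. Its solution set is a convex set with polyhedral structure: the intersection of a polyhedron with an open polyhedron.

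\emph{Projection.} Fourier--Motzkin elimination shows that the projection of such a set onto the $\bm\mu$-coordinates is again of the same type. Hence each $\Omega_\alpha$, and also $\intO(\Omega_\alpha)$ and $\operatorname{cl}(\Omega_\alpha)$, is a finite union of relatively open polyhedral cells. I would make this precise by partitioning the closed polyhedron $\operatorname{cl}\Omega_\alpha$ into the relative interiors of its faces.

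\emph{Cell decomposition of $\mathcal E^{\UE}$.} Take the common refinement of all these face decompositions over the finitely many $\alpha$, for instance via the arrangement of all defining hyperplanes. This gives a finite partition of $U^\circ$ into relatively open cells on which membership in each $\intO(\Omega_\alpha)$ is constant. So $\mathcal E^{\UE}$ is a union of cells of this arrangement.

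\emph{Dimension bound.} Suppose a full-dimensional ($n$-dimensional) cell $P$ lay in $\mathcal E^{\UE}$. Since $P=\bigcup_\alpha(P\cap\Omega_\alpha)$ is a finite union, each piece being a finite union of cells, some piece has nonempty interior. Then $P$ meets some $\intO(\Omega_\alpha)$, a contradiction. This step, the finiteness-based Baire argument, is what fails in the fat-Cantor example, and it is why affinity matters. The same argument applies verbatim to the transformed network with $A$ replaced by $2A$, which gives the claim for $\mathcal E^{\SO}$.

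\emph{Scaling law.} Compare the KKT systems. Write the SO system as
\[
C_\beta^\top(2AC_\beta f+\bm b)=R_\beta^\top\bm\lambda,\qquad R_\beta f=\bm\mu,
\]
together with the inequalities for the unused paths. Substitute $f=g/2$ and $\bm\mu=\bm\nu/2$. The system becomes
\[
C_\beta^\top(AC_\beta g+\bm b)=R_\beta^\top\bm\lambda,\qquad R_\beta g=\bm\nu,
\]
with the same inequalities, and $\supp(g)=\supp(f)$. Hence $\bm\mu\in\widetilde\Omega_\beta$ iff $2\bm\mu\in\Omega_\beta$; that is, $\Omega_\beta=2\widetilde\Omega_\beta$. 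The map $\bm\mu\mapsto2\bm\mu$ is a homeomorphism of $U^\circ$, so it preserves interiors and complements. Therefore $\mathcal O^{\UE}=2\mathcal O^{\SO}$ and $\mathcal E^{\UE}=2\mathcal E^{\SO}$.

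\emph{Main obstacle.} The delicate step is the cell bookkeeping: projections mix strict and weak inequalities, and I need interiors and complements to remain finite unions of cells. I would handle this most cleanly by working with the finite hyperplane arrangement generated by all facet normals of the closures $\operatorname{cl}\Omega_\alpha$. On each open face of that arrangement, the membership pattern in every $\Omega_\alpha$ is constant. That reduces everything to finite combinatorics.
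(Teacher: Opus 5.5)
Your proposal is correct and follows the paper's proof: the fixed-active-set Wardrop system is linear in $(f_\alpha,\bm\lambda,\bm\mu)$, Fourier--Motzkin elimination makes each $\Omega_\alpha$ a finite union of polyhedral cells, a finiteness argument rules out full-dimensional exceptional cells, and the substitution $f\leftrightarrow 2f$, $\bm\mu\leftrightarrow 2\bm\mu$ gives $\widetilde\Omega_\beta=\tfrac12\Omega_\beta$. Your common hyperplane-arrangement refinement is slightly more careful than the paper: the paper only shows that $\mathcal E^{\UE}$ is contained in finitely many lower-dimensional cells and then asserts it is such a union, whereas your refinement shows that $\mathcal E^{\UE}$ is exactly a finite union of arrangement cells.
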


\begin{proof}{Proof}
	The proof is given in Appendix \ref{app:sec5-proofs}.
\end{proof}

This proposition shows that, in affine networks, the countable manifold covering of the singular set obtained above reduces to a finite polyhedral-cell structure. Together with \cref{rmk:Cr-nondiff-manifold}, it gives \(Q_1\subseteq \mathcal E^{\UE}\) and \(Q_s\subseteq \mathcal E^{\UE}\cup\mathcal E^{\SO}\) for all \(s\ge2\).
Thus, in the affine case, \(Q_1\) and all higher-order singular candidate sets are controlled by finitely many lower-dimensional polyhedral cells.

We close this subsection with a boundary maximum principle for the PoA on affine cells. In the single-OD case, the maximum of the PoA over a demand interval is attained at an endpoint of the interval, and hence at a breakpoint \cite{cds_mp2024}. The following proposition is the higher-dimensional form of this principle: on every affine cell, the cellwise maximum of the PoA is attained on the cell boundary. 

\begin{proposition}[Boundary maximum principle for the PoA on affine cells]
	\label{thm:poa-cell-boundary-maximum}
	Assume that every edge cost is affine. Let \(K\subset U^\circ\) be a nonempty full-dimensional open convex polyhedron contained in a single UE active-set region \(\Omega_\alpha\), with \(C^{\SO}>0\) on its closure. Then the maximum of \(\PoA\) over \(\overline K\) is attained on the boundary of \(K\).
\end{proposition}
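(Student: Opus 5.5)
The plan is to reduce the statement to a one-dimensional quasiconvexity argument along demand rays through the origin, exploiting the homogeneity structure of affine costs. Fix a point \(\bm\mu^*\in K\) and consider the ray \(t\mapsto t\bm\mu^*\), \(t>0\). Because \(K\) is open and convex, the set \(\{t>0:\ t\bm\mu^*\in\overline K\}\) is an interval \(I\) containing \(t=1\) in its interior. Its finite endpoints correspond to points of \(\partial K\). I will show that \(t\mapsto\PoA(t\bm\mu^*)\) is quasiconvex on \(I\). It follows that \(\PoA(\bm\mu^*)\le\max\{\PoA\text{ at the endpoints}\}\), so every interior value is dominated by a boundary value. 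Combined with continuity of \(\PoA\) on \(\overline K\), which follows from \cref{lem:value-C1} and from the continuity of \(C^{\SO}\), this yields the claim. If \(\overline K\) is compact, the maximum exists and can be relocated to \(\partial K\). In the unbounded case I would take the same argument as showing that \(\sup_{\overline K}\PoA=\sup_{\partial K}\PoA\). I would still need to check that this supremum is attained, for instance via the limit \(\PoA\to1\) at heavy traffic, which is treated as known from the literature.

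\emph{UE side.} On \(K\subset\Omega_\alpha\) the KKT system is linear, so \(\bm\lambda(\bm\mu)=M\bm\mu+\bm w\) for some matrix \(M\) and vector \(\bm w\). By continuity of \(\bm\lambda\) (\cref{lem:value-C1}), this affine formula persists on \(\overline K\). Hence along the ray
\[
C^{\UE}(t\bm\mu^*)=t^2\,a+t\,b,\qquad a:=\bm\mu^{*\top}M\bm\mu^*,\quad b:=\bm\mu^{*\top}\bm w,
\]
for all \(t\in I\). Therefore \(C^{\UE}(t\bm\mu^*)/t=at+b\) is affine in \(t\). It is also positive on \(I\), since \(C^{\UE}\ge C^{\SO}>0\) on \(\overline K\).

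\emph{SO side.} Write \(\Cost(f)=Q(f)+L(f)\) with \(Q(f)=(Cf)^\top A(Cf)\) and \(L(f)=\bm b^\top Cf\). Since \(\mathcal F(t\bm\mu^*)=t\,\mathcal F(\bm\mu^*)\), substituting \(f=tg\) gives
\[
C^{\SO}(t\bm\mu^*)=\min_{g\in\mathcal F(\bm\mu^*)}\bigl(t^2Q(g)+tL(g)\bigr),
\qquad\text{so}\qquad
\sigma(t):=\frac{C^{\SO}(t\bm\mu^*)}{t}=\min_{g\in\mathcal F(\bm\mu^*)}\bigl(tQ(g)+L(g)\bigr).
\]
Thus \(\sigma\) is a pointwise minimum of affine functions of \(t\), hence concave, and it is positive on \(I\). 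It follows that
\[
\frac{1}{\PoA(t\bm\mu^*)}=\frac{\sigma(t)}{at+b}.
\]
For every \(c\ge0\), the superlevel set \(\{t\in I:\ \sigma(t)\ge c(at+b)\}\) is convex, because \(\sigma-c(at+b)\) is concave. Hence \(1/\PoA\) is quasiconcave on \(I\), and \(\PoA\) is quasiconvex there. Quasiconvexity implies that the maximum over any subinterval is attained at an endpoint, which gives the required domination of \(\PoA(\bm\mu^*)\) by values on \(\partial K\).

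The step I expect to be the main obstacle is choosing the right direction. Along a general line through \(\bm\mu^*\), \(C^{\SO}\) is merely convex and piecewise quadratic, because it may switch SO cells inside \(K\). A naive comparison of \(C^{\UE}-P^*C^{\SO}\) then fails, since \(-C^{\SO}\) has the wrong curvature. The ray direction is the one in which the scaling \(\mathcal F(t\bm\mu)=t\mathcal F(\bm\mu)\) turns \(C^{\SO}/t\) into a concave function, regardless of SO cell changes. It is also the direction in which the UE formula stays a single quadratic, because the segment remains in \(\overline K\). The remaining routine checks are positivity of \(at+b\), validity of the affine formula for \(\bm\lambda\) up to \(\partial K\), and the existence issue for unbounded \(K\) noted above.
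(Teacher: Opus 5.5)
Your core argument is exactly the paper's final step. The scaling $\mathcal F(t\bm q)=t\,\mathcal F(\bm q)$ makes $C^{\SO}(t\bm q)/t$ concave, and the fixed UE support makes $C^{\UE}(t\bm q)/t$ affine. You obtain the latter from the cellwise affine formula for $\bm\lambda$, while the paper interpolates equilibria along the ray; both work. Hence $\PoA(t_\theta\bm q)\le\max\{\PoA(t_1\bm q),\PoA(t_2\bm q)\}$, which settles bounded $K$.

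\textbf{The gap is the unbounded case.} It is not marginal: the cell closure $P$ over which the maximum is computed in \cref{ex:three-commodity-affine} is unbounded. Two things are missing there.

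First, your claim that ``the same argument'' gives $\sup_{\overline K}\PoA=\sup_{\partial K}\PoA$ is not correct as stated. The ray may never leave $K$: for instance $\bm q=(10,50,10)\in\mathcal C_4$, and $t\bm q\in\mathcal C_4$ for all $t>3/4$. Quasiconvexity on $[t_-,\infty)$ then only yields $\PoA(\bm q)\le\max\{\PoA(t_-\bm q),\PoA(T\bm q)\}$, where $T\bm q$ is still an interior point. A quasiconvex function on a half-line may be increasing. You need $\PoA(t\bm q)\to1$ as $t\to\infty$, together with $\PoA\ge1$, to conclude $\PoA(\bm q)\le\PoA(t_-\bm q)$.

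Second, even once every interior point is dominated by a boundary point, $\partial K$ is unbounded and the supremum need not be attained. For attainment you need $\PoA\to1$ uniformly as the demand escapes to infinity inside $\overline K$, not only along rays.

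This uniform statement cannot simply be taken from the literature. In the form ``$\PoA\to1$ as $\|\bm\mu\|\to\infty$'' it is false whenever some OD pair has a path of zero cost ($a_e=b_e=0$ on all its edges), because $\PoA$ is then independent of that coordinate. The heavy-traffic results you allude to are stated along proportional growth or under structural hypotheses on the costs. In any event they would have to be adapted to this degeneracy.

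The paper proves what is needed in two steps your proposal lacks:
\begin{enumerate}
\item It bounds the gap uniformly, $0\le C^{\UE}-C^{\SO}\le\kappa_*/4$. It follows $T_{\bm\mu}(\tau)=\Cost(f_\tau)$ along the quadratic programs $\min\{\tfrac12 f^\top Hf+\tau\bm d^\top f\}$, from $\tau=1/2$ (SO) to $\tau=1$ (UE). It shows $T_{\bm\mu}'(\tau)=(2\tau-1)\kappa_J$ on each support piece, with $\kappa_J$ independent of the demand.
\item It deletes the OD pairs with zero-cost paths, projects onto the remaining demand coordinates, and shows that $C^{\SO}$ is coercive on the projected closed polyhedron.
\end{enumerate}
Then $\PoA\le1+\Delta/C^{\SO}\to1$, so superlevel sets of $\PoA$ above $1$ are bounded and the maximum exists. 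The ray limit needed when $t_+=\infty$ follows as well. Without an argument of this kind, your proof covers only bounded cells.
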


\begin{proof}{Proof}
	The proof is given in Appendix \ref{app:sec5-proofs}.
\end{proof}

\begin{remark}[The one-dimensional specialization]
	When \(n=1\), the cell \(K\) is an open interval and \(\partial K\) consists of its endpoints; \cref{thm:poa-cell-boundary-maximum} then asserts that the maximum of the PoA over a cell closure is attained at an endpoint, so the global maximum over the demand half-line is attained at a breakpoint. The proposition is thus exactly the higher-dimensional generalization of the single-OD endpoint maximum principle of \citeauthor{cds_mp2024}~\cite{cds_mp2024}.
\end{remark}

\subsection{An Example of a Multi-Commodity Affine Network}

We next present a three-commodity affine-network example. On the one hand, this example explicitly illustrates the quadratic value functions and the quadratic-fractional PoA expression on a fixed UE/SO joint affine cell in affine networks, as discussed in the preceding subsection. On the other hand, it illustrates the boundary maximum principle of \cref{thm:poa-cell-boundary-maximum}: the cellwise maximum of the PoA is attained in the relative interior of a boundary edge of the cell closure.

\begin{example}[A three-commodity affine network]
	\label{ex:three-commodity-affine}
	Consider the directed network shown in \cref{fig:affine-example-network}, taken from \citeauthor{fisk1979}~\cite{fisk1979} and recently revisited by \citeauthor{cds_pt2024}~\cite[Example~4.3]{cds_pt2024}, who analyze it along a one-parameter demand curve; here we study the full three-dimensional demand cone. The three edge costs are given by
	\[
	c_{e_1}(x)=x,\qquad
	c_{e_2}(x)=x,\qquad
	c_{e_3}(x)=x+90.
	\]
	The network has three OD pairs, with demands \((A,B):\mu_1\), \((A,C):\mu_2\), and \((B,C):\mu_3\).
	The corresponding path sets are \(\mathcal P_1=\{p_{11}\}\), \(\mathcal P_2=\{p_{21},p_{22}\}\), and \(\mathcal P_3=\{p_{31}\}\), where
	\[
	p_{11}=e_1,\qquad
	p_{21}=e_3,\qquad
	p_{22}=e_1\cup e_2,\qquad
	p_{31}=e_2.
	\]
	Thus, only the second OD pair has a routing choice.
	
	\begin{figure}[htbp]
		\FIGURE
		{\begin{tikzpicture}[>=Stealth,scale=1]
			\node[circle,draw,inner sep=2pt,minimum size=0.9cm] (A) at (0,0) {\(A\)};
			\node[circle,draw,inner sep=2pt,minimum size=0.9cm] (B) at (3,1.35) {\(B\)};
			\node[circle,draw,inner sep=2pt,minimum size=0.9cm] (C) at (6,0) {\(C\)};
			\draw[->,thick] (A) -- node[above left] {\(e_1\)} (B);
			\draw[->,thick] (B) -- node[above right] {\(e_2\)} (C);
			\draw[->,thick] (A) -- node[below] {\(e_3\)} (C);
		\end{tikzpicture}}
		{Network structure in \cref{ex:three-commodity-affine}. \label{fig:affine-example-network}}
		{}
	\end{figure}

	Define \(Q_I(\bm\mu):=(\mu_1+\mu_2)^2+(\mu_2+\mu_3)^2\), \(Q_D(\bm\mu):=\mu_1^2+\mu_2^2+\mu_3^2+90\mu_2\), and
	\[
	Q_M^{\UE}(\bm\mu)
	:=
	\frac{2}{3}
	\left(
	\mu_1^2+\mu_2^2+\mu_3^2
	+\mu_1\mu_2+\mu_2\mu_3-\mu_1\mu_3
	+45\mu_1+90\mu_2+45\mu_3
	\right).
	\]
	The SO value function induced by the corresponding transformed-network split cell is \(Q_M^{\SO}(\bm\mu):=Q_M^{\UE}(\bm\mu)-675\).
	Let
	\[
	\alpha_I:=\{p_{11},p_{22},p_{31}\},
	\qquad
	\alpha_M:=\{p_{11},p_{21},p_{22},p_{31}\},
	\qquad
	\alpha_D:=\{p_{11},p_{21},p_{31}\},
	\]
	denote the different active path sets. On the SO side, we always understand its cell structure through the transformed network; the transformed active set in \cref{tab:affine-cells} refers to the active path set of the transformed network. Direct computation gives the cell structure of this example, as shown in \cref{tab:affine-cells}. The inequalities describe relative interiors; the corresponding value functions extend continuously to the corresponding closures.

	\begin{table}[!htbp]
	\TABLE
	{Cell structure and cellwise value functions in \cref{ex:three-commodity-affine}.\label{tab:affine-cells}}
	{\footnotesize
	\setlength{\tabcolsep}{4pt}
	\renewcommand{\arraystretch}{1.35}
	\begin{tabular}{@{}ccccccc@{}}
	\hline\up
	Cell & Demand range & UE active set & transformed active set & \(C^{\UE}\) & \(C^{\SO}\) & \(\PoA\) \\ \hline\up
	\(\mathcal C_1\) & \(\mu_1+\mu_3+2\mu_2<45\) & \(\alpha_I\) & \(\alpha_I\) & \(Q_I\) & \(Q_I\) & \(1\) \\
	\(\mathcal C_2\) & \(\mu_1+\mu_3-\mu_2<45<\mu_1+\mu_3+2\mu_2<90\) & \(\alpha_I\) & \(\alpha_M\) & \(Q_I\) & \(Q_M^{\SO}\) & \(\dfrac{Q_I}{Q_M^{\SO}}\) \\
	\(\mathcal C_3\) & \(45<\mu_1+\mu_3-\mu_2,\ \mu_1+\mu_3+2\mu_2<90\) & \(\alpha_I\) & \(\alpha_D\) & \(Q_I\) & \(Q_D\) & \(\dfrac{Q_I}{Q_D}\) \\
	\(\mathcal C_4\) & \(\mu_1+\mu_3-\mu_2<45,\ \mu_1+\mu_3+2\mu_2>90\) & \(\alpha_M\) & \(\alpha_M\) & \(Q_M^{\UE}\) & \(Q_M^{\SO}\) & \(\dfrac{Q_M^{\UE}}{Q_M^{\SO}}\) \\
	\(\mathcal C_5\) & \(45<\mu_1+\mu_3-\mu_2<90,\ \mu_1+\mu_3+2\mu_2>90\) & \(\alpha_M\) & \(\alpha_D\) & \(Q_M^{\UE}\) & \(Q_D\) & \(\dfrac{Q_M^{\UE}}{Q_D}\) \\
	\(\mathcal C_6\) & \(\mu_1+\mu_3-\mu_2>90\) & \(\alpha_D\) & \(\alpha_D\) & \(Q_D\) & \(Q_D\) & \(1\) \down\\ \hline
	\end{tabular}}
	{}
	\end{table}

	\Cref{tab:affine-cells} gives the complete cellwise value expressions in this example. The UE transition candidates lie on the affine hyperplanes
	\[
	\mu_1+\mu_3+2\mu_2=90,
	\qquad
	\mu_1+\mu_3-\mu_2=90,
	\]
	whereas the transformed-network transition candidates lie on the affine hyperplanes
	\[
	\mu_1+\mu_3+2\mu_2=45,
	\qquad
	\mu_1+\mu_3-\mu_2=45.
	\]
	This explicitly exhibits the affine structure in the preceding subsection: on the relative interior of each cell, both \(C^{\UE}\) and \(C^{\SO}\) are quadratic functions of \(\bm\mu\), and \(\mathcal{E}^{\UE}=2\mathcal{E}^{\SO}\).

	We further consider the closure of the joint cell \(\mathcal C_5\):
	\[
	P=\left\{\bm\mu\in\R_+^3:\ \mu_1+\mu_3+2\mu_2\ge90,\ 45\le \mu_1+\mu_3-\mu_2\le90\right\}.
	\]
	On \(P\), \(C^{\UE}(\bm\mu)=Q_M^{\UE}(\bm\mu)\) and \(C^{\SO}(\bm\mu)=Q_D(\bm\mu)\), and hence
	\begin{equation}
	\PoA(\bm\mu)
	=
	\frac{
		2\left(
		\mu_1^2+\mu_2^2+\mu_3^2
		+\mu_1\mu_2+\mu_2\mu_3-\mu_1\mu_3
		+45\mu_1+90\mu_2+45\mu_3
		\right)
	}{
		3\left(
		\mu_1^2+\mu_2^2+\mu_3^2+90\mu_2
		\right)
	}.
	\label{eq:affine-poa-cell}
	\end{equation}
	Direct maximization of \eqref{eq:affine-poa-cell} over this cell closure gives
	\[
	\operatorname*{arg\,max}_{\bm\mu\in P}\PoA(\bm\mu)=\{(30,15,30)\},
	\qquad
	\max_{\bm\mu\in P}\PoA(\bm\mu)=\frac65.
	\]
	
	Note that \((30,15,30)\) lies in the relative interior of the one-dimensional edge of \(P\) with endpoints \((0,15,60)\) and \((60,15,0)\), hence on the boundary of the cell closure, in accordance with the boundary maximum principle of \cref{thm:poa-cell-boundary-maximum}.
\end{example}


\section{Numerical Illustrations}
\label{sec:numerical}

We illustrate the regularity theory on the Sioux Falls benchmark network
(LeBlanc et al.~\cite{leb1975}; Transportation Networks for
Research~\cite{transportationnetworks}), which has $24$ nodes, $76$ directed
links, and $528$ origin--destination (OD) pairs with baseline demand
$\bar\mu$ of total $360600$. We consider two link-cost models: the BPR
costs $c_e(x)=t_e^0\bigl(1+0.15\,(x/\kappa_e)^{4}\bigr)$ provided with the
data set, and the affine specialization $c_e(x)=t_e^0(1+0.15\,x/\kappa_e)$
obtained by setting the exponent to $1$, which meets the
strictly-increasing structure required by the affine analysis. User
equilibria and system optima are computed by a path-based simplicial
decomposition with an accelerated projected-gradient inner solver; every
reported run attains a relative Wardrop gap below $10^{-12}$, and the
baseline equilibrium agrees with the reference solution of
Transportation Networks for Research~\cite{transportationnetworks} to a maximum link-flow relative error of
$5.5\times10^{-12}$. Since the demand cone has dimension $528$, we display
the price of anarchy on slices spanned by three representative OD pairs,
$10{\to}16$, $22{\to}10$ and $20{\to}17$, keeping all remaining demands at
baseline; the theory developed in the previous sections applies on such
general-position slices.

\subsection{Verification of the value-function gradient}

The cornerstone of the framework is the $C^1$ regularity of the Beckmann
value function with gradient $D_\mu V=\lambda$. We verify this identity
numerically at the baseline demand: for each of the three varied OD pairs,
the central difference of $V$ along the corresponding demand coordinate is
compared with the equilibrium OD cost $\lambda_i$.
\cref{tab:gradient} reports the relative errors, which are of order
$10^{-9}$ under BPR costs and $10^{-11}$ under affine costs, consistently
with the theorem.

\begin{table}[htbp]
\TABLE
{Verification of $D_\mu V=\lambda$ at the baseline demand:
  relative error of the central-difference derivative against the
  equilibrium OD cost $\lambda_i$.\label{tab:gradient}}
{\begin{tabular}{@{}cccc@{}}
\hline\up
OD pair & $\lambda_i$ & rel.\ error (BPR) & rel.\ error (affine) \\ \hline\up
$10\to16$ & $20.084810$ & $5.6\times10^{-9}$ & $6.6\times10^{-11}$ \\
$22\to10$ & $22.868728$ & $3.5\times10^{-9}$ & $8.4\times10^{-12}$ \\
$20\to17$ & $16.927523$ & $1.4\times10^{-9}$ & $6.2\times10^{-11}$ \down\\ \hline
\end{tabular}}
{}
\end{table}

\subsection{Geometry of the nondifferentiability set}
\label{sec:num-geom}

We first describe the shape of the nondifferentiability set of $\PoA$
under the BPR costs. \cref{fig:poa1d} shows $\PoA$ along the three
one-dimensional demand lines: the function is smooth between isolated
kinks, and each kink coincides with a change of the active path set. An
exact homotopy continuation along the demand ray $\mu=t\bar\mu$, in which
the equilibrium conditions are linear for fixed active set, locates $247$
breakpoints; a one-sided-derivative test on the closed forms shows that
essentially all of them are genuine nondifferentiable points of $\PoA$.

\begin{figure}[!htbp]
\FIGURE
{\includegraphics[width=0.9\linewidth]{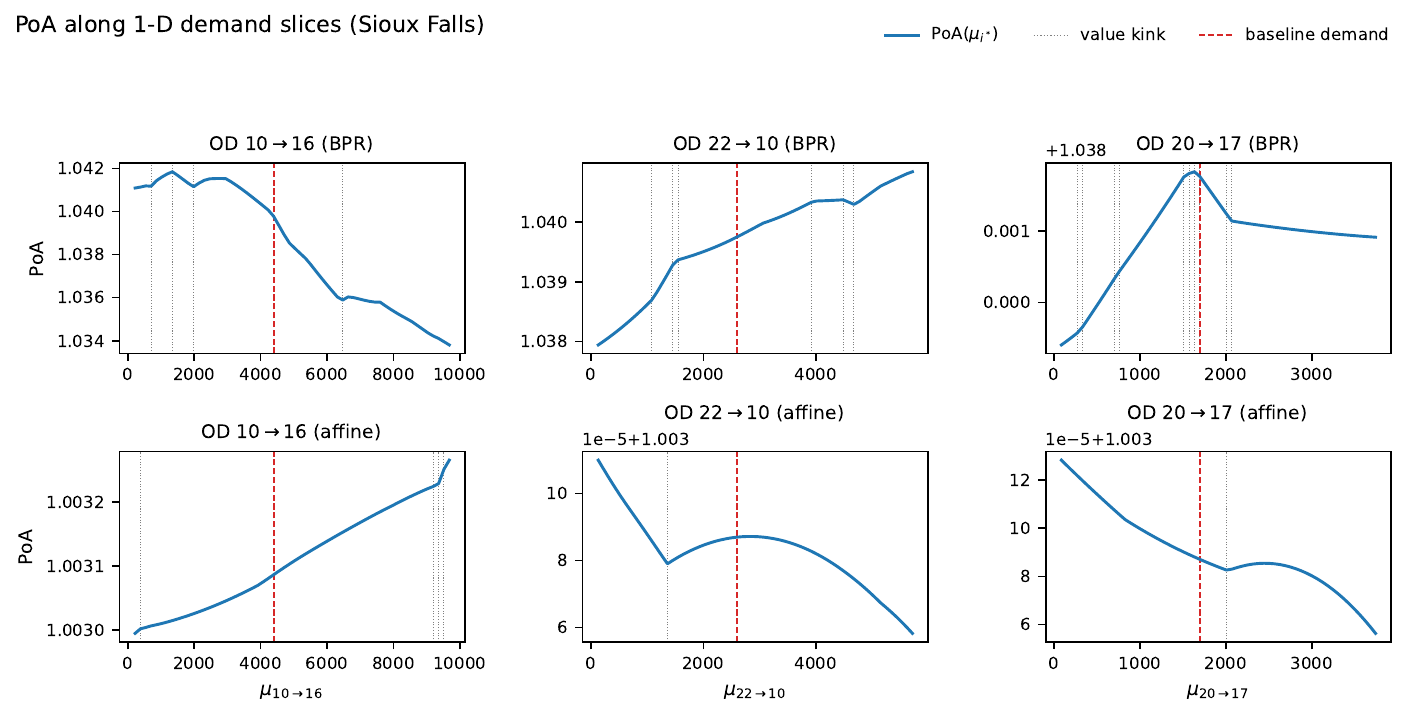}}
{$\PoA$ along one-dimensional demand slices (Sioux Falls).
  Dotted gray: detected value kinks; dashed red: baseline demand.
  \label{fig:poa1d}}
{}
\end{figure}

The same set is displayed on two- and three-dimensional slices in
\cref{fig:nondiff_bpr}: it organizes into \emph{curved} sheets --- a
collection of codimension-one surfaces in the three-OD demand subspace,
with the slice traces shown in the remaining cells. This is precisely the
content of the geometric characterization established earlier: under
strictly increasing smooth costs the nondifferentiability set is covered
by countably many codimension-one $C^1$ submanifolds.

\begin{figure}[!htbp]
\FIGURE
{\begin{tabular}{cc}
    \includegraphics[width=0.36\linewidth]{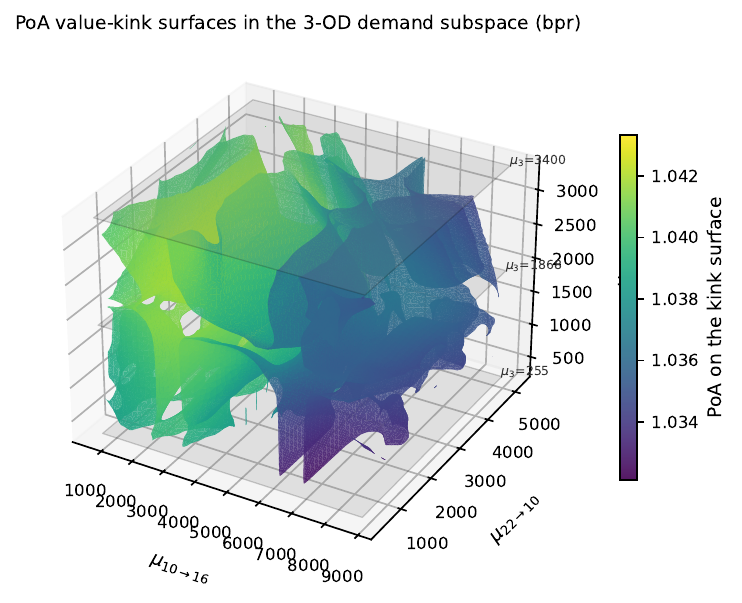} &
    \includegraphics[width=0.36\linewidth]{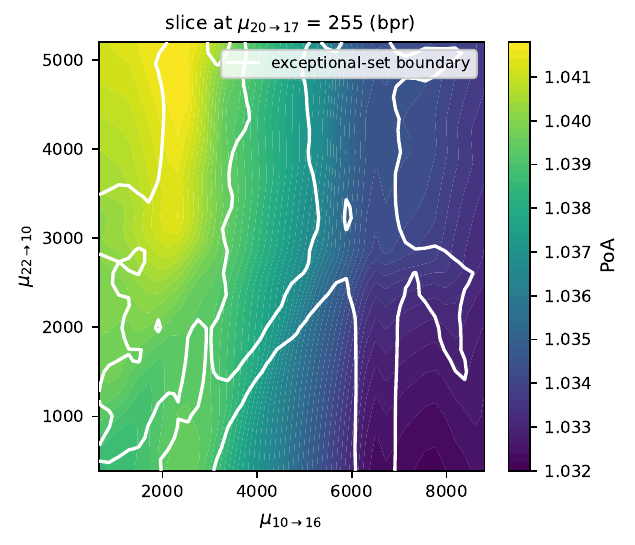}\\
    \includegraphics[width=0.36\linewidth]{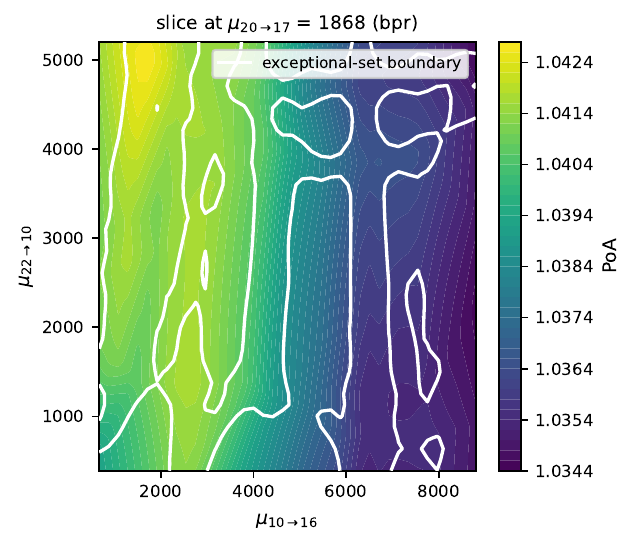} &
    \includegraphics[width=0.36\linewidth]{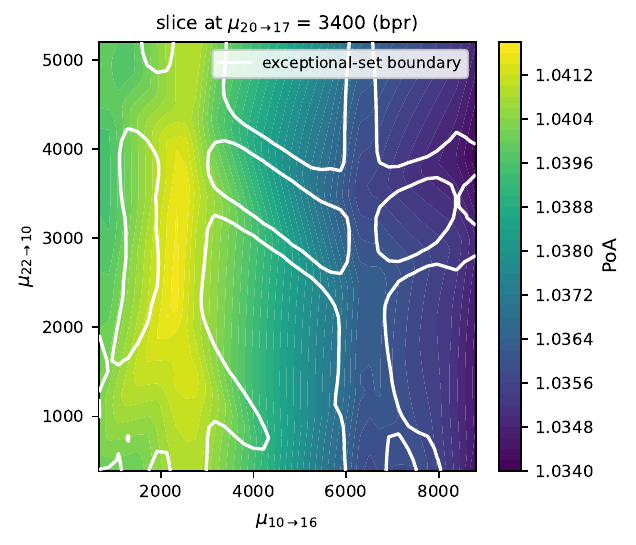}\\
  \end{tabular}}
{The nondifferentiability set under BPR costs. Top left: the
  value-kink surfaces in the three-OD demand subspace (the translucent
  planes mark the slice levels); remaining cells: the slice traces at
  three $\mu_{20\to17}$ levels.
  \label{fig:nondiff_bpr}}
{}
\end{figure}

\subsection{Planar locus and exact laws in the affine specialization}
\label{sec:num-affine}

Under affine costs the same nondifferentiability set becomes
\emph{planar}: \cref{fig:affine_planes} shows that the value-kink surfaces
of \cref{fig:nondiff_bpr} flatten into planar patches, and the slice
traces become straight lines, as the affine analysis asserts.

\begin{figure}[!htbp]
\FIGURE
{\begin{tabular}{cc}
    \includegraphics[width=0.36\linewidth]{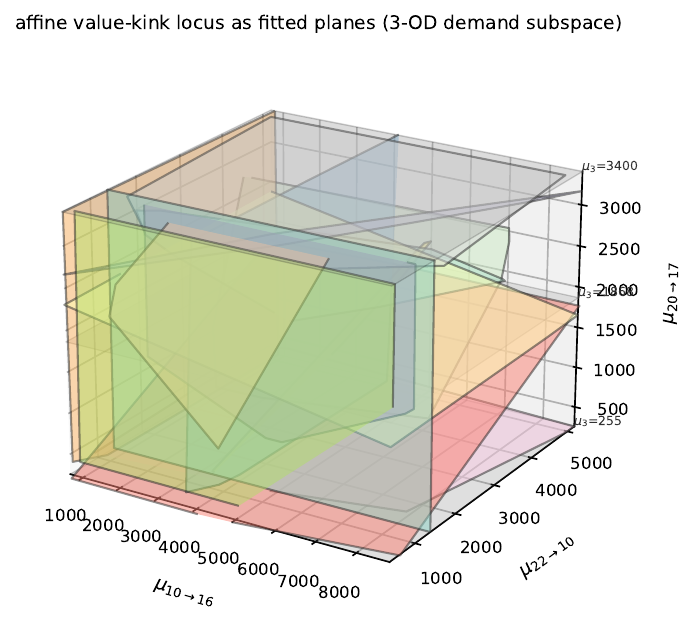} &
    \includegraphics[width=0.36\linewidth]{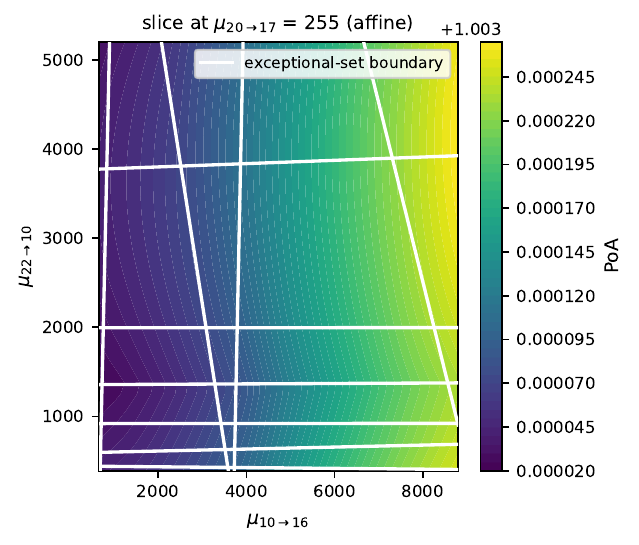}\\
    \includegraphics[width=0.36\linewidth]{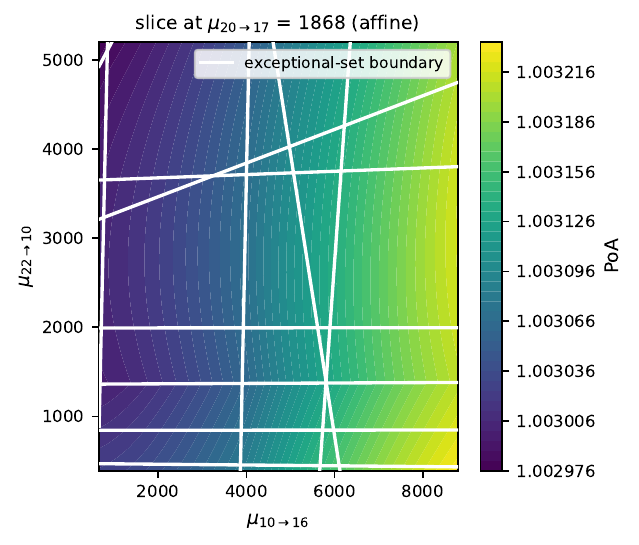} &
    \includegraphics[width=0.36\linewidth]{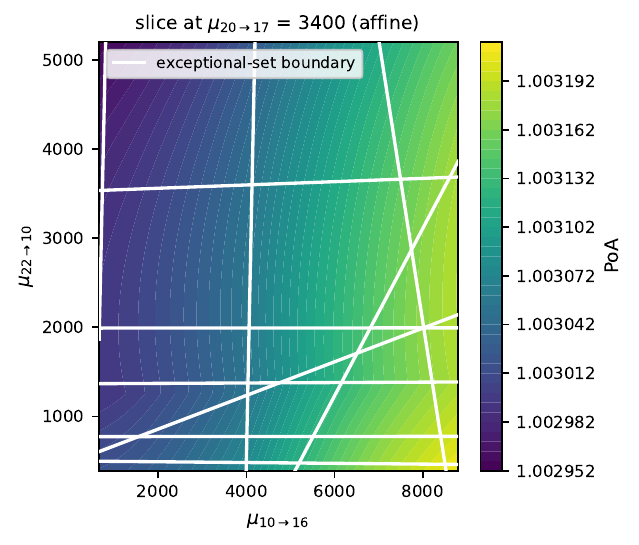}\\
  \end{tabular}}
{The nondifferentiability set under affine costs (same layout as
  \cref{fig:nondiff_bpr}): the surfaces and slice traces are planar.
  \label{fig:affine_planes}}
{}
\end{figure}

The affine model also yields two exact quantitative laws, both confirmed
sharply by the data. First, the scaling law
$\mathcal E^{\UE}=2\mathcal E^{\SO}$: \cref{fig:scaling_law} displays the
breakpoint spectra on the demand ray, and every one of the $247$
system-optimum breakpoints at $t$ has a matching user-equilibrium
breakpoint at $2t$. Second, the boundary maximum
principle: \cref{fig:boundary} shows that the maximum of $\PoA$ over
the closure of an active-set cell is attained in the \emph{relative
interior of a boundary edge} of the cell.

\begin{figure}[!t]
\FIGURE
{\includegraphics[width=0.62\linewidth]{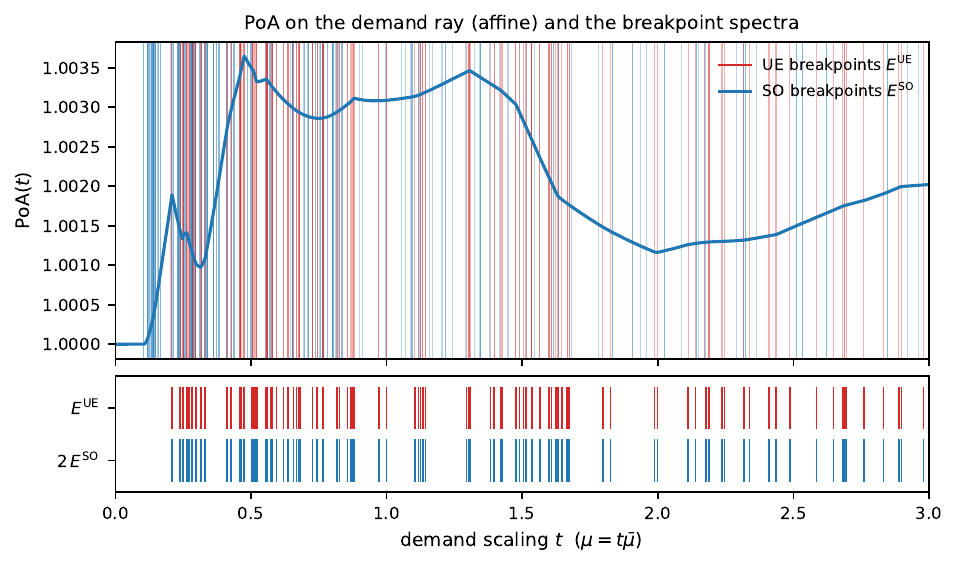}}
{Breakpoints on the demand ray (affine costs). Top:
  $\PoA(t)$ with the kinks coloured by origin (red: UE breakpoints;
  blue: SO breakpoints). Bottom: the two spectra on $[0,3]$ coincide
  exactly---every UE breakpoint sits at $2\times$ an SO breakpoint.
  \label{fig:scaling_law}}
{}
\end{figure}

\begin{figure}[!t]
\FIGURE
{\includegraphics[width=0.62\linewidth]{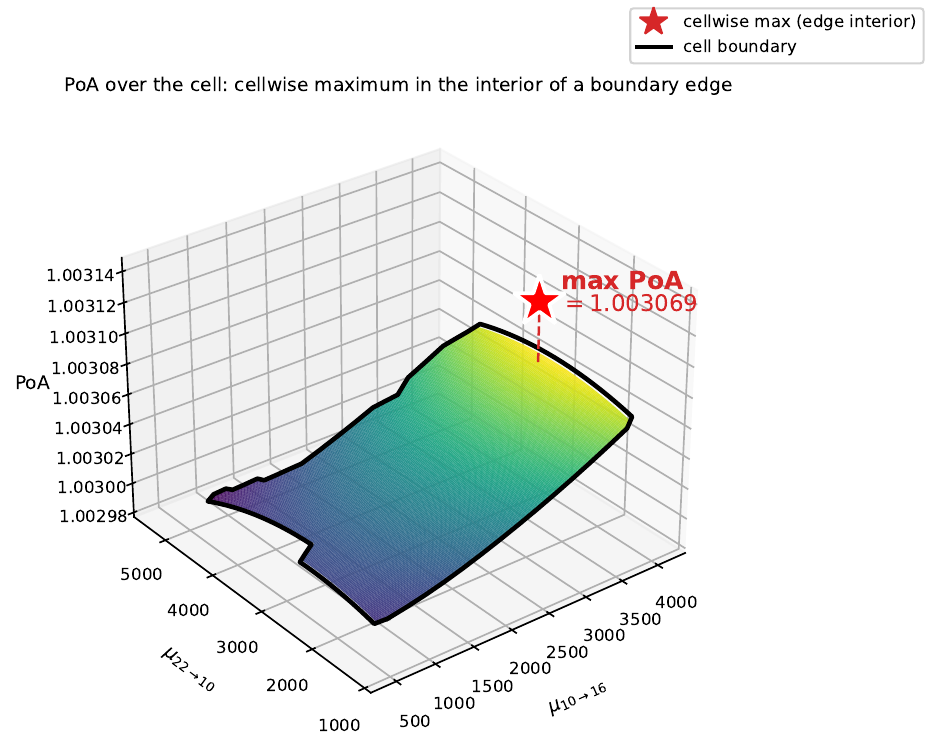}}
{The $\PoA$ surface over a single active-set cell (affine
  costs): the cellwise maximum (red star, $\PoA=1.003069$) is attained
  in the relative interior of a boundary edge of the cell, in
  accordance with the boundary maximum principle.
  \label{fig:boundary}}
{}
\end{figure}

\subsection{Local structure of the PoA near kinks}
\label{sec:num-kink}

We close with two neighbouring nondifferentiable points examined at
full resolution on the BPR demand line that varies $\mu_{10\to16}$
within the two-dimensional slice. \cref{fig:feature} (top) shows the
local $\PoA$ with corners at $\mu_{10\to16}\approx6421$ and
$\approx6654$. The bottom row displays, at one sample point in each of
the three segments separated by the two kinks, the active paths of the
two OD pairs whose active sets change at the kinks, together with the
routing of the varied OD pair itself. Crossing the first kink, the
active path $4{\to}5{\to}9{\to}10{\to}16$ of the OD pair $4{\to}16$
leaves the network (purple; present at $6321$, gone at $6521$), while
its other two active paths persist throughout. The second kink at
$\approx6654$ is generated by yet another OD pair: $10{\to}7$ gains the
additional active path $10{\to}9{\to}8{\to}7$ (red) alongside its
original path $10{\to}16{\to}18{\to}7$ (orange). The varied OD pair
$10{\to}16$ itself keeps a single active path---the direct
link---across the whole window (grey). A kink of the value of $\PoA$
along a demand line therefore need not be caused by the OD pair whose
demand is varied.

\begin{figure}[!t]
\FIGURE
{\includegraphics[width=0.85\linewidth]{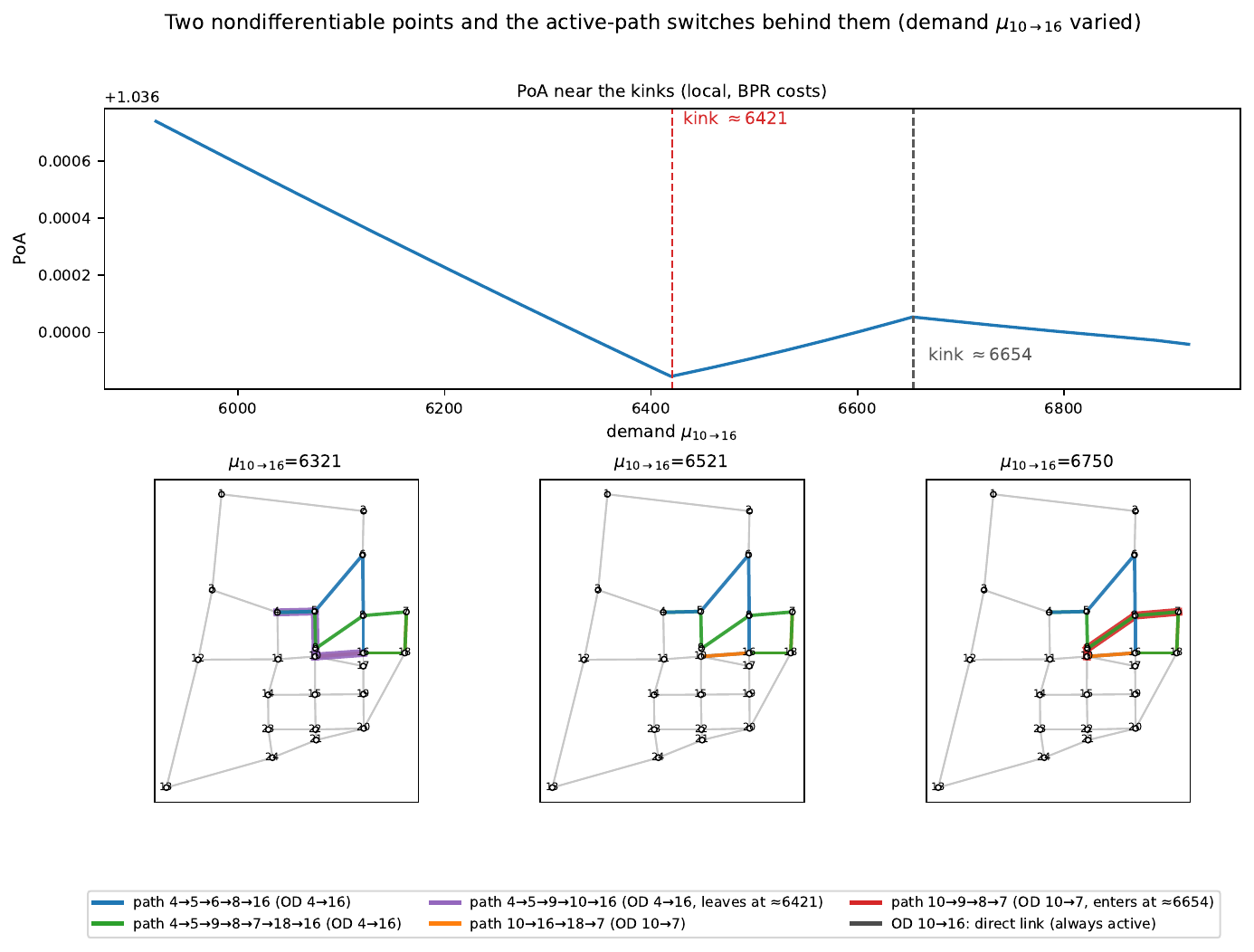}}
{Top: local $\PoA$ near the two kinks on the demand line
  $\mu_{10\to16}$. Bottom: the active paths in the three segments
  separated by the two kinks; cool colours for the OD pair $4{\to}16$
  (the purple path leaves at $\approx6421$), warm colours for
  $10{\to}7$ (the red path enters at $\approx6654$), and grey for the
  varied OD pair $10{\to}16$ (direct link, always active).
  \label{fig:feature}}
{}
\end{figure}

\section{Conclusion and Future Works}
\label{sec:concluding}

We have developed a regularity theory of the price of anarchy as a function
of the full demand vector: a multivariate \(C^1\) framework on active-set
interiors; a geometric characterization of the nondifferentiability set;
sharp conditions for higher-order regularity; and an affine specialization
with an exact scaling law and a boundary maximum principle for the PoA. We
conclude with two remarks.

\paragraph{Relation to the differentiability question of Wu and M\"ohring.}
In \citeauthor{wu_mohring2023}~\cite{wu_mohring2023}, the PoA is viewed as a map on a metric space of
games, and the authors ask under which conditions the upper and lower
derivatives of this map coincide \cite[Open~Question~4.2]{wu_mohring2023}.
Noting that the topology of the full game space is too complicated for a
direct treatment, they suggested considering parametric demands in a
finite-dimensional Euclidean space as a promising first step, and explicitly
left the resulting differentiability analysis of the PoA for future work. The
present paper carries out exactly this program on the demand slice of their
game space: by \cref{C1_regularity_of_PoA} the PoA is differentiable at every
point of the regular region \(\mathcal O\); by
\cref{prop:nondiff-measure-zero} the residual set has measure zero; and under
the strictly-increasing/constant cost structure, the candidates for failure
are reduced to a countable union of codimension-one \(C^1\) submanifolds
(\cref{thm:nondiff-countable-manifold-cover}), which need not materialize
even when active sets switch on a set of positive measure
(\cref{ex:E_positive_measure}). It is also worth noting that the cost
structure under which \citeauthor{wu_mohring2023}~\cite{wu_mohring2023} obtained the H\"older exponent
one---edge costs that are constant or have a strictly positive
derivative---coincides with \cref{ass:positive-constant-costs} of the present
paper. Our results give a geometric explanation for this coincidence: under
the same structure, the PoA is locally selected from finitely many \(C^1\)
branches that agree in value at their interfaces, so it is locally Lipschitz
everywhere on \(U^\circ\), which is precisely the H\"older exponent one.

\paragraph{Further directions.}
It is natural to ask when the countable manifold cover of
\cref{thm:nondiff-countable-manifold-cover} is in fact finite, and whether
the manifolds form a stratification of the demand space. The polyhedral cell
decomposition of \cref{sec:affine} suggests algorithmic approaches to the
exact piecewise computation of the PoA over the whole demand cone, extending
the one-dimensional parametric computations of
\citeauthor{klimm_warode2019}~\cite{klimm_warode2019,klimm_warode2022}. Finally, the boundary maximum principle of \cref{thm:poa-cell-boundary-maximum} reduces cellwise maxima of the PoA to boundary faces; a face-stationary characterization of such maxima would complete the picture. We leave these questions for future work.

\appendix

\section{Proofs for Section \ref{sec:model}}
\label{app:sec2-proofs}

\begin{proof}{Proof of \cref{lem:value-C1}}
	First, \(V\) is convex on \(U^\circ\). Indeed, let
	\(\bm\mu,\bm\nu\in U^\circ\), let \(f^\mu\in\mathcal F(\bm\mu)\) and
	\(f^\nu\in\mathcal F(\bm\nu)\) be Beckmann minimizers, and let
	\(\theta\in[0,1]\). Since
	\(R\bigl(\theta f^\mu+(1-\theta)f^\nu\bigr)=\theta\bm\mu+(1-\theta)\bm\nu\),
	the flow \(\theta f^\mu+(1-\theta)f^\nu\) is feasible for demand
	\(\theta\bm\mu+(1-\theta)\bm\nu\). By convexity of \(B\),
	\[
	\begin{aligned}
		V(\theta\bm\mu+(1-\theta)\bm\nu)
		&\le
		B\bigl(\theta f^\mu+(1-\theta)f^\nu\bigr) \\
		&\le
		\theta B(f^\mu)+(1-\theta)B(f^\nu)  \\
		&=
		\theta V(\bm\mu)+(1-\theta)V(\bm\nu).
	\end{aligned}
	\]
	Thus \(V\) is a finite convex function on the open convex set \(U^\circ\).
	
	Fix \(\bm\mu\in U^\circ\), and let \(f^*\in\argmin_{f\in\mathcal F(\bm\mu)}B(f)\).
	By the Beckmann--Wardrop characterization, there exists the equilibrium cost vector
	\(\bm\lambda(\bm\mu)=(\lambda_1(\bm\mu),\ldots,\lambda_n(\bm\mu))^\top\)
	such that, for every \(p\in\mathcal P_i\), \(c_p(f^*)\ge \lambda_i(\bm\mu),\)
	with equality whenever \(f_p^*>0\).

	We show that \(\bm\lambda(\bm\mu)\in\partial V(\bm\mu)\). Let
	\(\bm\nu\in U^\circ\), and take any \(g\in\mathcal F(\bm\nu)\). Since \(B\) is convex and
	differentiable, \(B(g)\ge B(f^*)+\nabla B(f^*)^\top(g-f^*)\).
	For each path \(p\in\mathcal P_i\), one has \((\nabla B(f^*))_p=c_p(f^*).\)
	Hence
	\[
	\sum_{p\in\mathcal P}c_p(f^*)g_p
	=
	\sum_{i=1}^n\sum_{p\in\mathcal P_i}c_p(f^*)g_p
	\ge
	\sum_{i=1}^n\lambda_i(\bm\mu)\sum_{p\in\mathcal P_i}g_p
	=
	\bm\lambda(\bm\mu)^\top\bm\nu.
	\]
	Similarly, because \(f_p^*>0\) only on paths whose cost is equal to the corresponding
	equilibrium OD cost,
	\[
	\sum_{p\in\mathcal P}c_p(f^*)f_p^*
	=
	\sum_{i=1}^n\lambda_i(\bm\mu)\sum_{p\in\mathcal P_i}f_p^*
	=
	\bm\lambda(\bm\mu)^\top\bm\mu.
	\]
	Therefore
	\[
	B(g)\ge V(\bm\mu)+\bm\lambda(\bm\mu)^\top(\bm\nu-\bm\mu).
	\]
	Taking the infimum over \(g\in\mathcal F(\bm\nu)\) gives
	\(V(\bm\nu)\ge V(\bm\mu)+\bm\lambda(\bm\mu)^\top(\bm\nu-\bm\mu)\),
	so \(\bm\lambda(\bm\mu)\in\partial V(\bm\mu)\).
	
	It remains to show that this subgradient is unique. Let
	\(\bm\xi\in\partial V(\bm\mu)\). By the standard perturbation-duality
	characterization of the value function
	\(V(\bm\mu)=\inf\{B(f): f\ge0,\ Rf=\bm\mu\}\)
	(see, e.g., Theorems~28.2 and~29.1 of \citeauthor{rockafellar1970}~\cite{rockafellar1970}; the constraints are
	linear, and \(V\) is finite on the open set \(U^\circ\)),
	the vector \(\bm\xi\) is an optimal multiplier for the equality constraint
	\(Rf=\bm\mu\). Equivalently, for some Beckmann minimizer \(f^*\), the
	Karush--Kuhn--Tucker conditions give, for every \(p\in\mathcal P_i\),
	\(c_p(f^*)\ge \xi_i\), and \(c_p(f^*)=\xi_i\) whenever \(f_p^*>0\).
	Thus \(\bm\xi\) is an OD equilibrium cost vector associated with a Wardrop
	equilibrium. Since the equilibrium edge-cost vector is unique (see the discussion
	and references in \cref{sec:model}) and hence the induced
	OD equilibrium cost vector is unique, one must have \(\bm\xi=\bm\lambda(\bm\mu).\)
	Consequently, \(\partial V(\bm\mu)=\{\bm\lambda(\bm\mu)\}\).
	Since this holds for every \(\bm\mu\in U^\circ\), the finite convex function \(V\) is
	differentiable everywhere on \(U^\circ\), with
	\(D_{\bm\mu}V(\bm\mu)=\bm\lambda(\bm\mu)\).
	Here we use the facts that a convex function is differentiable at a point if and
	only if its subdifferential there is a singleton, and that a differentiable convex
	function on an open convex set has a continuous gradient (Theorems~25.1 and~25.5
	of \citeauthor{rockafellar1970}~\cite{rockafellar1970}, respectively). Hence
	\(V\in C^1(U^\circ)\) and \(D_{\bm\mu}V(\bm\mu)=\bm\lambda(\bm\mu)\),
	and \(\bm\lambda\) is continuous on \(U^\circ\).\Halmos
\end{proof}

\begin{proof}{Proof of \cref{thm:local-C1-CUE}}
	Since \(\bm\mu^0\in \operatorname{int}(\Omega_\alpha)\),
	there exists an open neighborhood \(N\subset U^\circ\) of \(\bm\mu^0\) such that \(N\subset \Omega_\alpha .\)
	Hence, for every \(\bm\mu\in N\), the original Beckmann problem admits a minimizer
	whose positive-flow support is exactly \(\alpha\).
	
	For such a demand vector \(\bm\mu\), let \(f^{*}\) be a Beckmann minimizer of the
	original problem with support \(\alpha\). By deleting the zero components outside
	\(\alpha\), we obtain a feasible solution of the restricted Beckmann problem associated
	with \(\alpha\). Therefore, \(V_\alpha(\bm\mu)\le V(\bm\mu).\)
	Conversely, every feasible flow of the restricted problem is also feasible for the
	original Beckmann problem after inserting zero components on the paths outside
	\(\alpha\). Hence \(V(\bm\mu)\le V_\alpha(\bm\mu).\)
	Thus \(V(\bm\mu)=V_\alpha(\bm\mu)\) for every \(\bm\mu\in N\).
	
	We next recall the value-function regularity of the restricted problem. For fixed
	\(\bm\mu\in N\), introduce the perturbation function
	\[
	\varphi^\alpha_{\bm\mu}(f_\alpha,z)
	=
	\begin{cases}
		B_\alpha(f_\alpha),
		&
		f_\alpha\ge 0,\quad R_\alpha f_\alpha=\bm\mu+z,
		\\[0.5ex]
		+\infty,
		&
		\text{otherwise}.
	\end{cases}
	\]
	Define \(v^\alpha_{\bm\mu}(z):=\inf_{f_\alpha}\varphi^\alpha_{\bm\mu}(f_\alpha,z)\).
	Then, by construction, \(v^\alpha_{\bm\mu}(z)=V_\alpha(\bm\mu+z)\).
	Since \(\varphi^\alpha_{\bm\mu}\) is proper, closed, and convex, the map \(z\longmapsto V_\alpha(\bm\mu+z)\)
	is convex. Hence \(V_\alpha\) is convex on \(N\).
	
	We now identify its subdifferential by convex duality. The Fenchel conjugate of
	\(\varphi^\alpha_{\bm\mu}\) at \((0,\lambda)\) is
	\[
	(\varphi^\alpha_{\bm\mu})^*(0,\lambda)
	=
	\sup_{f_\alpha,z}
	\left\{
	\langle \lambda,z\rangle
	-
	\varphi^\alpha_{\bm\mu}(f_\alpha,z)
	\right\}.
	\]
	Using the constraint \(R_\alpha f_\alpha=\bm\mu+z,\)
	we obtain
	\[
	(\varphi^\alpha_{\bm\mu})^*(0,\lambda)
	=
	\sup_{f_\alpha\ge0}
	\left\{
	\langle \lambda,R_\alpha f_\alpha-\bm\mu\rangle
	-
	B_\alpha(f_\alpha)
	\right\}.
	\]
	Thus the corresponding dual problem is \(\min_{\lambda\in\mathbb R^n}(\varphi^\alpha_{\bm\mu})^*(0,\lambda)\).
	
	Since the restricted feasible set is nonempty on \(N\), the standard convex-duality
	description of the subdifferential of a value function (see again Theorems~28.2
	and~29.1 of \citeauthor{rockafellar1970}~\cite{rockafellar1970}) gives
	\[
	\partial V_\alpha(\bm\mu)=S(D^\alpha_{\bm\mu}),
	\]
	where \(S(D^\alpha_{\bm\mu})\) denotes the set of optimal solutions of the above dual
	problem.

	Let \(f_\alpha^*(\bm\mu)\) be a restricted Beckmann minimizer. If \(\lambda\) is dual
	optimal, then \(f_\alpha^*(\bm\mu)\) maximizes
	\(\langle \lambda,R_\alpha f_\alpha\rangle-B_\alpha(f_\alpha)\)
	over \(f_\alpha\ge0\). Therefore, the first-order optimality conditions give, for each
	retained path \(p\in \alpha\cap\mathcal P_i\),
	\(\lambda_i-c_p(f_\alpha^*(\bm\mu))=0\) if \(f_p^*(\bm\mu)>0\),
	and \(\lambda_i-c_p(f_\alpha^*(\bm\mu))\le0\) if \(f_p^*(\bm\mu)=0\).
	These are precisely the Wardrop conditions \eqref{eq:wardrop-equal}--\eqref{eq:wardrop-ineq} for the restricted network. Hence the dual
	optimizer is the restricted equilibrium-cost vector \(\lambda^\alpha(\bm\mu).\)
	Consequently, \(\partial V_\alpha(\bm\mu)=\{\lambda^\alpha(\bm\mu)\}\).
	Thus \(V_\alpha\) is differentiable at \(\bm\mu\), and
	\(D_{\bm\mu}V_\alpha(\bm\mu)=\lambda^\alpha(\bm\mu)\).
	Since this holds for every \(\bm\mu\in N\), and \(V_\alpha\) is convex, its gradient is
	continuous on \(N\). Therefore
	\(V_\alpha\in C^1(N)\) and \(D_{\bm\mu}V_\alpha(\bm\mu)=\lambda^\alpha(\bm\mu)\).

	Combining this with the local identity
	\(V(\bm\mu)=V_\alpha(\bm\mu)\) for \(\bm\mu\in N\),
	we obtain
	\[
	D_{\bm\mu}V(\bm\mu)=D_{\bm\mu}V_\alpha(\bm\mu)=\lambda^\alpha(\bm\mu),
	\qquad \bm\mu\in N.
	\]
	On the other hand, by the global Beckmann value-function identity of
	\cref{lem:value-C1},
	\(D_{\bm\mu}V(\bm\mu)=\lambda(\bm\mu)\).
	Hence \(\lambda(\bm\mu)=\lambda^\alpha(\bm\mu)\) for every \(\bm\mu\in N\).
	In particular, \(\lambda\) is continuous on \(N\).

	Finally, by definition of the user-equilibrium total cost,
	\(C^{\UE}(\bm\mu)=\bm\mu^\top \lambda(\bm\mu)\).
	Using the local identity \(\lambda=\lambda^\alpha\) and the \(C^1\) regularity of the
	restricted value function \(V_\alpha\), this gives the local \(C^1\) regularity of the
	user-equilibrium cost on the active-set interior: \(C^{\UE}\in C^1(N).\)
	Therefore there exists an open neighborhood \(N\subset U^\circ\) of \(\bm\mu^0\) such
	that \(C^{\UE}\in C^1(N).\)\Halmos
\end{proof}

\section{Proofs for Section \ref{sec:nondiff}}
\label{app:sec3-proofs}

To prove \cref{lem:local-extension-separation}, we first record the following local extension result.

\begin{lemma}[Local \(C^1\) extension branches]
	\label{lem:proof-local-extension-branches}
	Suppose that \cref{ass:SO_Cost_positive,ass:positive-constant-costs} hold, and that
	\(xc_e(x)\) is convex for every \(e\). Fix \(\bm\mu^0\in U^\circ\), and let
	\(\mathcal A(\bm\mu^0):=\{\alpha\subseteq\mathcal P:\ \bm\mu^0\in\operatorname{cl}_{U^\circ}(\Omega_\alpha)\}\).
	Then there exists an open neighborhood \(W\subset U^\circ\) of \(\bm\mu^0\) such
	that:
	\begin{enumerate}
		\item for every \(\alpha\in\mathcal A(\bm\mu^0)\), there exists a \(C^1\) map \(\widehat P_\alpha:W\to\mathbb R\)
		satisfying \(\widehat P_\alpha(\bm\mu)=\PoA(\bm\mu)\) for all \(\bm\mu\in W\cap\Omega_\alpha\);
		
		\item if \(\gamma\notin\mathcal A(\bm\mu^0)\), then, after shrinking \(W\) if
		necessary, \(W\cap\Omega_\gamma=\varnothing.\)
	\end{enumerate}
\end{lemma}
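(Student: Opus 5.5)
Item 2 is the easy half, and I would do it first. If \(\gamma\notin\mathcal A(\bm\mu^0)\), then \(\bm\mu^0\notin\operatorname{cl}_{U^\circ}(\Omega_\gamma)\). So some open ball around \(\bm\mu^0\) misses \(\Omega_\gamma\). Since \(\mathcal P\) is finite, there are only finitely many such \(\gamma\), and intersecting these balls gives a neighborhood \(W_0\) with \(W_0\cap\Omega_\gamma=\varnothing\) for every \(\gamma\notin\mathcal A(\bm\mu^0)\). Every \(W\) constructed below is taken inside \(W_0\).

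For item 1, fix \(\alpha\in\mathcal A(\bm\mu^0)\). The plan is to build a \(C^1\) branch \(\widehat{\bm\tau}_\alpha\) of the equilibrium edge-cost vector near \(\bm\mu^0\) from the restricted equilibrium system of \(\alpha\) with all of its equalities imposed. The unknowns are \((f_\alpha,\bm\lambda)\) and the equations are
\[
C_\alpha^\top c(C_\alpha f_\alpha)-R_\alpha^\top\bm\lambda=0,\qquad R_\alpha f_\alpha-\bm\mu=0 .
\]
Positivity of \(f_\alpha\) and the inequalities for paths outside \(\alpha\) are dropped, so the system stays meaningful on a full neighborhood.

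To get a base solution, pick \(\bm\mu^j\in\Omega_\alpha\) with \(\bm\mu^j\to\bm\mu^0\) and equilibria \(f^j\) supported on \(\alpha\). By compactness of the feasible sets, a subsequence converges to \(f^0\ge0\), which solves the system at \(\bm\mu^0\) with \(\bm\lambda(\bm\mu^0)\); the multiplier converges by continuity of \(\bm\lambda\) (\cref{lem:value-C1}).

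The Jacobian in \((f_\alpha,\bm\lambda)\) is
\[
\begin{pmatrix} C_\alpha^\top \Lambda C_\alpha & -R_\alpha^\top\\ R_\alpha & 0\end{pmatrix},\qquad \Lambda=\operatorname{diag}(c_e'),
\]
and it is generally singular. Its kernel consists of flow directions that change no congestible edge load, together with the multiplier directions they force. This is where \cref{ass:positive-constant-costs} enters. Split the edges into \(\mathcal E_+\) (with \(c_e'>0\)) and \(\mathcal E_0\) (constant cost). Take as new unknowns the congestible edge loads \(x_+\in\operatorname{range}(C_{\alpha,+})\) and \(\bm\lambda\) modulo the directions annihilated by \(R_\alpha^\top\) (there are none, since every \(\alpha_i\) is nonempty). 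The system then reduces to: there exists \(f_\alpha\) with \(R_\alpha f_\alpha=\bm\mu\) and \(C_{\alpha,+}f_\alpha=x_+\), and the vector \(C_{\alpha,+}^\top c_+(x_+)+C_{\alpha,0}^\top\kappa\) lies in \(\operatorname{range}(R_\alpha^\top)\).

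Restricted to the subspace \(\{(x_+,\bm\mu):x_+ = C_{\alpha,+}f,\ \bm\mu=R_\alpha f\}\) and the appropriate quotient, this is a square system. Its derivative has the form of a Schur complement \(M^\top\Lambda_+M\) with \(\Lambda_+\succ0\), and it is injective on the quotient by \(\ker(C_{\alpha,+})\). The implicit function theorem then yields \(C^1\) maps \(\bm\mu\mapsto(\widehat x_{+,\alpha}(\bm\mu),\widehat{\bm\lambda}_\alpha(\bm\mu))\) on a neighborhood \(W_\alpha\). By uniqueness of equilibrium edge costs, these coincide with \(\bm\tau_+\) and \(\bm\lambda\) on \(W_\alpha\cap\Omega_\alpha\): any equilibrium supported on \(\alpha\) solves the system, and for \(\bm\mu\) near \(\bm\mu^0\) it lies near the base solution by continuity. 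Setting \(\widehat C^{\UE}_\alpha:=\bm\mu^\top\widehat{\bm\lambda}_\alpha\) gives a \(C^1\) extension of \(C^{\UE}\).

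I expect the main obstacle to be this kernel-based reduction: showing that the reduced Jacobian is nonsingular, and that the IFT branch is the one selected by equilibria in \(\Omega_\alpha\) despite the non-uniqueness of path flows. The first thing I would try is to eliminate the path flows entirely and work with the convex reduced program in \(x_+\), whose strict convexity on \(\mathcal E_+\) gives the positive definiteness.

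Finally, \(C^{\SO}\in C^1(U^\circ)\) by \cref{prop:BMW_for_SOcost} and \cref{lem:value-C1}, and \(C^{\SO}(\bm\mu^0)>0\) by \cref{ass:SO_Cost_positive}. Shrinking so that \(C^{\SO}>0\) on the neighborhood, set \(\widehat P_\alpha:=\widehat C^{\UE}_\alpha/C^{\SO}\). This map is \(C^1\) and equals \(\PoA\) on \(W\cap\Omega_\alpha\). Taking \(W:=W_0\cap\bigcap_{\alpha\in\mathcal A(\bm\mu^0)}W_\alpha\), a finite intersection, completes the construction.
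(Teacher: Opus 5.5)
Your outline follows the paper's route. You take a limiting base flow \(f^0\) from \(\bm\mu^j\in\Omega_\alpha\), split the edges into \(\mathcal E_+\) and \(\mathcal E_0\) with a kernel reduction, apply the implicit function theorem with a reduced Jacobian of the form \(M^\top\Lambda_+M\succ0\), identify the branch with the true equilibrium through uniqueness of the \(\mathcal E_+\)-loads, and divide by \(C^{\SO}\).

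The first concrete gap is the domain of the costs. You claim that dropping positivity keeps the system meaningful on a full neighborhood, but each \(c_e\) is only defined (and \(C^1\)) on \([0,+\infty)\). Because \(\bm\mu^0\) is merely in the \emph{closure} of \(\Omega_\alpha\), the limit \(f^0\) may vanish on paths of \(\alpha\). An edge of \(\mathcal E_+\) used by \(\alpha\) can then carry \(x^0_e=0\), and the branch requires negative loads. For instance, take two parallel links \(c_1(x)=x\), \(c_2(x)=x+1\), \(\alpha=\{1,2\}\) (so \(\Omega_\alpha=(1,\infty)\)) and \(\mu^0=1\); the branch is \(f_2=(\mu-1)/2<0\) for \(\mu<1\). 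The implicit function theorem needs a \(C^1\) equation on an open set around the base point. So you must first extend each such \(c_e\) to a \(C^1\) function with positive derivative on an open interval around \(x^0_e\) (constant extension on \(\mathcal E_0\)). Afterwards you must check that the true loads land where the extension coincides with \(c_e\). This is exactly what the paper does before setting up \(\widehat L_\alpha\).

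The second gap is your assertion that, after ``the appropriate quotient'', the reduced system is square; this is where the real content lies. Write \(f_\alpha=Ty+S\bm\mu\) with \(\operatorname{Im}T=\ker R_\alpha\), set \(K:=\ker(C_{\alpha,+}T)\), and let \(U\) be an orthonormal basis of \(K^\perp\). For fixed \(\bm\mu\), your unknown \(x_+\) ranges over an affine set of dimension \(\dim K^\perp\). The condition \(C_{\alpha,+}^\top c_+(x_+)+C_{\alpha,0}^\top\bm\kappa\in\operatorname{range}(R_\alpha^\top)\) amounts to \(\dim\ker R_\alpha=\dim K+\dim K^\perp\) scalar equations, so the system is overdetermined by \(\dim K\). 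The missing step is this: for \(v\in K\), the \(v\)-component of the equation equals \((C_{\alpha,0}Tv)^\top\bm\kappa\). That is a constant independent of \((x_+,\bm\mu)\), and it vanishes because the equation holds at the base point. Only after discarding these identically satisfied equations do you get a square system in \(z\in K^\perp\). Its Jacobian is \((C_{\alpha,+}TU)^\top\Lambda_+(C_{\alpha,+}TU)\), which is positive definite since \(C_{\alpha,+}TU\) is injective.

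Your identification step, via local uniqueness from the implicit function theorem plus continuity, is a valid alternative to the paper's monotonicity identity \(\sum_{e\in\mathcal E_+}\Delta x_e\,[\widehat c_e(\bar x_e)-\widehat c_e(\widehat x_e)]=0\). Two conditions apply. The continuity must be stated for the \(\mathcal E_+\)-loads \(\bm\mu\mapsto x_+(\bm\mu)\) (unique at each demand, continuous by compactness), not for path flows. It also uses that \(f^0\) is a genuine equilibrium at \(\bm\mu^0\), which holds because the inactive-path inequalities also pass to the limit.
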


\begin{proof}{Proof}
	We first record a value-level uniqueness fact that will be used repeatedly. Fix a
	demand \(\bm\mu\), and let \(f\) and \(g\) be two Beckmann minimizers at this demand.
	By \cref{ass:positive-constant-costs}, every edge belongs either to
	\(E_+:=\{e\in E:\ c'_e(t)>0\ \text{for all }t\ge0\}\),
	or to
	\(E_0:=\{e\in E:\ c_e \text{ is constant}\}\).
	For \(e\in E_+\), the primitive \(x\mapsto \int_0^x c_e(t)\,dt\)
	is strictly convex, while for \(e\in E_0\) it is affine. If \(C_+f\ne C_+g,\)
	where \(C_+\) denotes the rows of \(C\) indexed by \(E_+\), then the midpoint \(\frac{f+g}{2}\)
	is feasible for the same demand and gives a strictly smaller Beckmann value than \(\frac{B(f)+B(g)}{2},\)
	contradicting the optimality of \(f\) and \(g\). Therefore all Beckmann minimizers
	at the same demand induce the same positive-slope edge-load vector: \(C_+f=C_+g.\)
	In particular, since the costs on \(E_0\) are constant, the induced edge-cost vector
	and the induced OD equilibrium-cost vector are value-level unique.
	
	Now fix \(\alpha\in\mathcal A(\bm\mu^0).\)
	If \(\alpha\) is not admissible, namely if \(\alpha_i=\varnothing\) for some OD pair
	\(i\), then \(W\cap\Omega_\alpha=\varnothing\) for every sufficiently small
	neighborhood \(W\) of \(\bm\mu^0\). Such an \(\alpha\) does not contribute to the
	local representation. Hence we assume below that \(\alpha\) is admissible.
	
	After deleting all paths outside \(\alpha\), write the restricted path-flow vector as \(f_\alpha\in\mathbb R^{|\alpha|},\)
	and denote the corresponding incidence matrices by \(C_\alpha, R_\alpha\).
	Since \(\alpha_i\neq\varnothing\) for every \(i\), the matrix \(R_\alpha\) has full
	row rank. Choose a linear parametrization of the affine solution space \(R_\alpha f_\alpha=\bm\mu\)
	in the form
	\(f_\alpha=T_\alpha^+y+T_\alpha^\mu\bm\mu\), \(y\in\mathbb R^{|\alpha|-n}\),
	where \(\operatorname{Im}(T_\alpha^+)=\ker R_\alpha\).
	This parametrization is used only to construct local extensions; along the extension
	we do not impose \(f_\alpha\ge0\).
	
	Since \(\alpha\in\mathcal A(\bm\mu^0),\)
	there exists a sequence
	\(\bm\mu^k\in\Omega_\alpha\) with \(\bm\mu^k\to\bm\mu^0\).
	For each \(k\), choose a Beckmann minimizer at demand \(\bm\mu^k\) whose positive
	support is exactly \(\alpha\), delete all paths outside \(\alpha\), and write the
	resulting restricted flow as
	\(f_\alpha^k=T_\alpha^+y_\alpha^k+T_\alpha^\mu\bm\mu^k\).
	Since the demands remain in a bounded neighborhood of \(\bm\mu^0\), the corresponding
	feasible flow sets are uniformly bounded. Because \(T_\alpha^+\) has full column
	rank, the uniform boundedness of the flows \(f_\alpha^k\) yields the boundedness of
	the vectors \(y_\alpha^k\). Passing to a subsequence if necessary, we
	may assume that \(y_\alpha^k\to y_\alpha^0.\)
	Set \(f_\alpha^0:=T_\alpha^+y_\alpha^0+T_\alpha^\mu\bm\mu^0\) and \(x_\alpha^0:=C_\alpha f_\alpha^0\).
	By continuity of \(B\) and of the Beckmann value function, \(f_\alpha^0\), after
	zero extension outside \(\alpha\), is a Beckmann minimizer at demand \(\bm\mu^0\).
	
	For each \(e\in E_+\), choose an open interval \(I_e\subset\mathbb R\) containing
	\(x_{\alpha,e}^0\) and a \(C^1\) local extension \(\widehat c_e:I_e\to\mathbb R\)
	such that \(\widehat c_e(x)=c_e(x)\) whenever \(x\in I_e\cap[0,+\infty)\),
	and \(\widehat c'_e(x)>0\) for all \(x\in I_e\).
	This is possible because \(c'_e(x_{\alpha,e}^0)>0\). If \(x_{\alpha,e}^0=0\), take a
	one-sided linear \(C^1\) extension on a sufficiently small interval to the left of
	zero. For \(e\in E_0\), extend \(c_e\) locally as the same constant function.
	
	Let \(C_{\alpha,+}\)
	denote the rows of \(C_\alpha\) indexed by \(E_+\), and let \(C_{\alpha,0}\) denote
	the rows indexed by \(E_0\). By shrinking the neighborhood of \((y_\alpha^0,\bm\mu^0)\)
	if necessary, we may assume that all \(E_+\)-edge loads generated in this
	neighborhood belong to the intervals \(I_e\). On this local neighborhood, define
	\(\widehat x_\alpha(y,\bm\mu):=C_\alpha T_\alpha^+y+C_\alpha T_\alpha^\mu\bm\mu\)
	and
	\(\widehat B_\alpha(y,\bm\mu):=\sum_{e\in E}\int_0^{(\widehat x_\alpha(y,\bm\mu))_e}\widehat c_e(s)\,ds\).
	The first-order equation in the free variable \(y\) is
	\[
	\widehat L_\alpha(y,\bm\mu)
	:=
	D_y\widehat B_\alpha(y,\bm\mu)
	=
	(T_\alpha^+)^\top C_\alpha^\top
	\widehat{\bm c}\bigl(\widehat x_\alpha(y,\bm\mu)\bigr).
	\]
	Because \(f_\alpha^k>0\) on all retained paths, \(f_\alpha^k\) is an interior
	minimizer of the restricted Beckmann problem. Hence
	\(\widehat L_\alpha(y_\alpha^k,\bm\mu^k)=0\).
	Passing to the limit gives
	\(\widehat L_\alpha(y_\alpha^0,\bm\mu^0)=0\).

	Set \(K_\alpha:=\ker(C_{\alpha,+}T_\alpha^+)\) and \(M_\alpha:=K_\alpha^\perp\).
	Let \(U_\alpha\) be a matrix whose columns form an orthonormal basis of
	\(M_\alpha\). Decompose \(y_\alpha^0=U_\alpha z_\alpha^0+w_\alpha^0\) with \(w_\alpha^0\in K_\alpha\).
	Fix the \(K_\alpha\)-component \(w_\alpha^0\), and define
	\[
	G_\alpha(z,\bm\mu)
	:=
	U_\alpha^\top
	\widehat L_\alpha(U_\alpha z+w_\alpha^0,\bm\mu).
	\]
	
	For every \(v\in K_\alpha\), \(C_{\alpha,+}T_\alpha^+v=0.\)
	Hence the positive-slope edge loads do not vary in the direction \(v\). Moreover,
	for \(e\in E_0\), the extended edge cost is constant. Therefore
	\[
	v^\top \widehat L_\alpha(y,\bm\mu)
	=
	\bigl(C_\alpha T_\alpha^+v\bigr)^\top
	\widehat{\bm c}\bigl(\widehat x_\alpha(y,\bm\mu)\bigr)
	=
	\bigl(C_{\alpha,0}T_\alpha^+v\bigr)^\top\bm\kappa_0,
	\]
	where \(\bm\kappa_0\) is the constant edge-cost vector on \(E_0\). Thus
	\(v^\top \widehat L_\alpha(y,\bm\mu)\) is independent of \((y,\bm\mu)\). Since
	\(\widehat L_\alpha(y_\alpha^0,\bm\mu^0)=0\),
	we obtain \(v^\top \widehat L_\alpha(y,\bm\mu)=0\) for all \(v\in K_\alpha\).
	Hence \(\widehat L_\alpha(y,\bm\mu)\in M_\alpha\),
	and consequently, locally around \((z_\alpha^0,\bm\mu^0)\),
	\(G_\alpha(z,\bm\mu)=0\) if and only if \(\widehat L_\alpha(U_\alpha z+w_\alpha^0,\bm\mu)=0\).

	The derivative of \(G_\alpha\) with respect to \(z\) is
	\[
	D_zG_\alpha(z,\bm\mu)
	=
	U_\alpha^\top
	(T_\alpha^+)^\top
	C_{\alpha,+}^\top
	D_+(z,\bm\mu)
	C_{\alpha,+}T_\alpha^+
	U_\alpha,
	\]
	where
	\(D_+(z,\bm\mu):=\operatorname{diag}\bigl(\widehat c'_e\bigl((\widehat x_\alpha(U_\alpha z+w_\alpha^0,\bm\mu))_e\bigr)\bigr)_{e\in E_+}\).
	For every \(\xi\ne0\),
	\[
	\xi^\top D_zG_\alpha(z_\alpha^0,\bm\mu^0)\xi
	=
	\bigl(C_{\alpha,+}T_\alpha^+U_\alpha\xi\bigr)^\top
	D_+(z_\alpha^0,\bm\mu^0)
	\bigl(C_{\alpha,+}T_\alpha^+U_\alpha\xi\bigr).
	\]
	Since \(U_\alpha\xi\in K_\alpha^\perp\) and \(U_\alpha\xi\ne0\),
	\(C_{\alpha,+}T_\alpha^+U_\alpha\xi\ne0\).
	Moreover \(D_+(z_\alpha^0,\bm\mu^0)\) is positive definite. Therefore
	\(\xi^\top D_zG_\alpha(z_\alpha^0,\bm\mu^0)\xi>0\).
	Hence \(D_zG_\alpha(z_\alpha^0,\bm\mu^0)\) is nonsingular.

	By the implicit function theorem, there exist a neighborhood \(W_\alpha\subset U^\circ\)
	of \(\bm\mu^0\) and a \(C^1\) map
	\(z_\alpha^*:W_\alpha\to\mathbb R^{\dim M_\alpha}\)
	such that \(z_\alpha^*(\bm\mu^0)=z_\alpha^0\) and \(G_\alpha(z_\alpha^*(\bm\mu),\bm\mu)=0\) for all \(\bm\mu\in W_\alpha\).
	Define \(\widehat y_\alpha^*(\bm\mu):=U_\alpha z_\alpha^*(\bm\mu)+w_\alpha^0\).
	Then \(\widehat L_\alpha(\widehat y_\alpha^*(\bm\mu),\bm\mu)=0\) for all \(\bm\mu\in W_\alpha\).

	The induced extended edge-load and edge-cost vectors are
	\(\widehat x_\alpha^*(\bm\mu):=\widehat x_\alpha(\widehat y_\alpha^*(\bm\mu),\bm\mu)\)
	and \(\widehat\tau_\alpha(\bm\mu):=\widehat{\bm c}(\widehat x_\alpha^*(\bm\mu))\).
	Both are \(C^1\) on \(W_\alpha\).
	
	We now define the corresponding extended OD-cost vector. From
	\(\widehat L_\alpha(\widehat y_\alpha^*(\bm\mu),\bm\mu)=0\)
	we have
	\((T_\alpha^+)^\top C_\alpha^\top\widehat\tau_\alpha(\bm\mu)=0\).
	Since \(\operatorname{Im}(T_\alpha^+)=\ker R_\alpha\),
	this gives \(C_\alpha^\top\widehat\tau_\alpha(\bm\mu)\in(\ker R_\alpha)^\perp\).
	Because \(R_\alpha\) has full row rank,
	\((\ker R_\alpha)^\perp=\operatorname{Range}(R_\alpha^\top)\).
	Hence there exists a unique vector
	\(\widehat{\bm\lambda}_\alpha(\bm\mu)\in\mathbb R^n\)
	such that
	\(C_\alpha^\top\widehat\tau_\alpha(\bm\mu)=R_\alpha^\top\widehat{\bm\lambda}_\alpha(\bm\mu)\).
	Equivalently, for every OD pair \(i\) and every retained path \(p\in\alpha_i\),
	\(\sum_{e\in p}\widehat\tau_{\alpha,e}(\bm\mu)=\widehat\lambda_{\alpha,i}(\bm\mu)\).
	Thus the value of \(\widehat\lambda_{\alpha,i}\) is independent of the choice of
	\(p\in\alpha_i\). Since \(R_\alpha\) has full row rank,
	\(\widehat{\bm\lambda}_\alpha\) is obtained linearly from
	\(C_\alpha^\top\widehat\tau_\alpha\). Therefore
	\(\widehat{\bm\lambda}_\alpha\in C^1(W_\alpha;\mathbb R^n)\).

	Set \(\widehat C_\alpha^{\UE}(\bm\mu):=\bm\mu^\top\widehat{\bm\lambda}_\alpha(\bm\mu)\).
	Then \(\widehat C_\alpha^{\UE}\in C^1(W_\alpha)\).
	By \cref{prop:BMW_for_SOcost}, the SO--Beckmann reduction, together
	with the Beckmann value-function regularity of \cref{lem:value-C1}, gives \(C^{\SO}\in C^1(U^\circ).\)
	Moreover, by \cref{ass:SO_Cost_positive},
	\(C^{\SO}(\bm\mu)>0\) for all \(\bm\mu\in U^\circ\).
	Define
	\(\widehat P_\alpha(\bm\mu):=\widehat C_\alpha^{\UE}(\bm\mu)/C^{\SO}(\bm\mu)\) for \(\bm\mu\in W_\alpha\).
	By the assumed SO value-function regularity, \(\widehat P_\alpha\in C^1(W_\alpha).\)
	
	It remains to identify this extension with the true \(\PoA\) on
	\(W_\alpha\cap\Omega_\alpha\). We first show that, after shrinking \(W_\alpha\) if
	necessary, every Beckmann minimizer with positive support \(\alpha\) at a demand \(\bm\mu\in W_\alpha\cap\Omega_\alpha\)
	has its positive-slope edge-load vector in the local extension neighborhood.
	
	Indeed, otherwise there would exist
	\(\bm\mu^j\in\Omega_\alpha\) with \(\bm\mu^j\to\bm\mu^0\),
	and restricted Beckmann minimizers \(\bar f_\alpha^j\) with support \(\alpha\) such
	that \(C_{\alpha,+}\bar f_\alpha^j\) stays outside that neighborhood. Passing to a
	subsequence, we may assume that \(\bar f_\alpha^j\to \bar f_\alpha^0.\)
	By continuity of \(B\) and of the Beckmann value function, \(\bar f_\alpha^0\), after
	zero extension outside \(\alpha\), is a Beckmann minimizer at demand \(\bm\mu^0\).
	By the value-level uniqueness fact proved at the beginning of this proof, all
	Beckmann minimizers at demand \(\bm\mu^0\) have the same positive-slope edge-load
	vector. Since \(f_\alpha^0\), after zero extension, is also a Beckmann minimizer at
	\(\bm\mu^0\), we get \(C_{\alpha,+}\bar f_\alpha^0=C_{\alpha,+}f_\alpha^0\).
	This contradicts the choice of \(\bar f_\alpha^j\). Hence the claim follows.

	Now take \(\bm\mu\in W_\alpha\cap\Omega_\alpha\).
	Choose a Beckmann minimizer of the original problem whose positive-flow support is
	exactly \(\alpha\). After deleting the zero components outside \(\alpha\), write this
	restricted minimizer as
	\(\bar f_\alpha=T_\alpha^+\bar y_\alpha+T_\alpha^\mu\bm\mu\).
	Since \(\bar f_\alpha>0\), it is an interior minimizer of the restricted Beckmann
	problem. By the preceding paragraph, its positive-slope edge loads lie in the local
	extension neighborhood. Hence \(\widehat L_\alpha(\bar y_\alpha,\bm\mu)=0\).
	On the other hand, \(\widehat L_\alpha(\widehat y_\alpha^*(\bm\mu),\bm\mu)=0\).
	Let \(d:=\bar y_\alpha-\widehat y_\alpha^*(\bm\mu)\).
	Subtracting the two equations and multiplying by \(d^\top\), we obtain
	\(0=d^\top\bigl[\widehat L_\alpha(\bar y_\alpha,\bm\mu)-\widehat L_\alpha(\widehat y_\alpha^*(\bm\mu),\bm\mu)\bigr]\).
	Equivalently,
	\(0=\bigl(C_\alpha T_\alpha^+d\bigr)^\top\bigl[\widehat{\bm c}\bigl(\widehat x_\alpha(\bar y_\alpha,\bm\mu)\bigr)-\widehat{\bm c}\bigl(\widehat x_\alpha(\widehat y_\alpha^*(\bm\mu),\bm\mu)\bigr)\bigr]\).
	The edges in \(E_0\) contribute zero to the difference of edge costs. Thus
	\(0=\sum_{e\in E_+}\Delta x_e\bigl[\widehat c_e(\bar x_e)-\widehat c_e(\widehat x_e)\bigr]\),
	where \(\Delta x_e:=\bar x_e-\widehat x_e.\)
	For \(e\in E_+\), both \(\bar x_e\) and \(\widehat x_e\) lie in the local interval
	on which \(\widehat c'_e>0\). Hence each summand is nonnegative and vanishes if and
	only if \(\Delta x_e=0\). Therefore \(\Delta x_e=0\) for all \(e\in E_+\).
	Thus the true restricted minimizer and the local branch induce the same loads on all
	positive-slope edges. On \(E_0\), edge costs are constant. Consequently,
	\(\widehat\tau_\alpha(\bm\mu)=\tau(\bm\mu)\),
	where \(\tau(\bm\mu)\) denotes the equilibrium edge-cost vector of the original
	network. Hence the induced OD equilibrium-cost vectors coincide:
	\(\widehat{\bm\lambda}_\alpha(\bm\mu)=\bm\lambda(\bm\mu)\).
	Therefore
	\[
	\widehat C_\alpha^{\UE}(\bm\mu)
	=
	\bm\mu^\top\widehat{\bm\lambda}_\alpha(\bm\mu)
	=
	\bm\mu^\top\bm\lambda(\bm\mu)
	=
	C^{\UE}(\bm\mu).
	\]
	It follows that
	\[
	\widehat P_\alpha(\bm\mu)
	=
	\frac{\widehat C_\alpha^{\UE}(\bm\mu)}{C^{\SO}(\bm\mu)}
	=
	\frac{C^{\UE}(\bm\mu)}{C^{\SO}(\bm\mu)}
	=
	\PoA(\bm\mu),
	\qquad
	\bm\mu\in W_\alpha\cap\Omega_\alpha.
	\]

	Finally, since \(\mathcal P\) is finite, the family of candidate active sets is
	finite. For every \(\gamma\notin\mathcal A(\bm\mu^0)\), one has
	\(\bm\mu^0\notin\operatorname{cl}_{U^\circ}(\Omega_\gamma)\).
	Hence there exists a neighborhood \(W_\gamma\) of \(\bm\mu^0\) such that
	\(W_\gamma\cap\Omega_\gamma=\varnothing\).
	Taking the intersection of the finitely many neighborhoods constructed above and
	shrinking if necessary, we obtain a common neighborhood \(W\) satisfying both
	assertions of the lemma.\Halmos
\end{proof}

\begin{proof}{Proof of \cref{lem:local-extension-separation}}
	Let \(\bm\mu^0\in\Nondiff(\PoA),\)
	and define \(\mathcal A(\bm\mu^0):=\{\alpha\subseteq\mathcal P:\ \bm\mu^0\in \operatorname{cl}_{U^\circ}(\Omega_\alpha)\}\).
	By \cref{lem:proof-local-extension-branches}, there exists a neighborhood
	\(W\subset U^\circ\) of \(\bm\mu^0\) such that, for every
	\(\alpha\in\mathcal A(\bm\mu^0)\), there is a \(C^1\) map \(\widehat P_\alpha:W\to\mathbb R\)
	satisfying \(\widehat P_\alpha(\bm\mu)=\PoA(\bm\mu)\) for all \(\bm\mu\in W\cap\Omega_\alpha\),
	and \(W\cap\Omega_\gamma=\varnothing\) for all \(\gamma\notin\mathcal A(\bm\mu^0)\).

	First, all relevant extensions agree with \(\PoA\) at \(\bm\mu^0\). Indeed, fix \(\alpha\in\mathcal A(\bm\mu^0).\)
	Then there exists a sequence
	\(\bm\mu^k\in W\cap\Omega_\alpha\) with \(\bm\mu^k\to\bm\mu^0\).
	For each \(k\), \(\widehat P_\alpha(\bm\mu^k)=\PoA(\bm\mu^k)\).
	Since \(\widehat P_\alpha\) is continuous and \(\PoA\) is continuous under the present
	assumptions, passing to the limit gives
	\(\widehat P_\alpha(\bm\mu^0)=\PoA(\bm\mu^0)\).

	Suppose, toward a contradiction, that all these local extensions have the same
	gradient at \(\bm\mu^0\). Then there exists \(a\in\mathbb R^n\) such that
	\(\nabla\widehat P_\alpha(\bm\mu^0)=a\) for all \(\alpha\in\mathcal A(\bm\mu^0)\).
	Because \(\mathcal A(\bm\mu^0)\) is finite and each \(\widehat P_\alpha\) is \(C^1\),
	the first-order expansions are uniform over \(\alpha\in\mathcal A(\bm\mu^0)\):
	\[
	\widehat P_\alpha(\bm\mu)
	=
	\widehat P_\alpha(\bm\mu^0)
	+
	a^\top(\bm\mu-\bm\mu^0)
	+
	o(\|\bm\mu-\bm\mu^0\|)
	\]
	as \(\bm\mu\to\bm\mu^0\), uniformly in \(\alpha\).

	For \(\bm\mu\in W\), choose an active set \(\alpha(\bm\mu)\) such that \(\bm\mu\in\Omega_{\alpha(\bm\mu)}.\)
	By the local exclusion property, \(\alpha(\bm\mu)\in\mathcal A(\bm\mu^0)\).
	Hence \(\PoA(\bm\mu)=\widehat P_{\alpha(\bm\mu)}(\bm\mu)\)
	for all \(\bm\mu\in W\). Using the uniform expansion and the identity
	\(\widehat P_{\alpha(\bm\mu)}(\bm\mu^0)=\PoA(\bm\mu^0)\),
	we obtain
	\[
	\PoA(\bm\mu)
	=
	\PoA(\bm\mu^0)
	+
	a^\top(\bm\mu-\bm\mu^0)
	+
	o(\|\bm\mu-\bm\mu^0\|).
	\]
	Thus \(\PoA\) is differentiable at \(\bm\mu^0\), contradicting \(\bm\mu^0\in\Nondiff(\PoA).\)
	Therefore there exist two active sets \(\alpha,\beta\in\mathcal A(\bm\mu^0)\)
	such that \(\nabla\widehat P_\alpha(\bm\mu^0)\ne\nabla\widehat P_\beta(\bm\mu^0)\).
	Together with the value identity already proved, this gives
	\(\widehat P_\alpha(\bm\mu^0)=\widehat P_\beta(\bm\mu^0)=\PoA(\bm\mu^0)\)
	and \(\nabla\widehat P_\alpha(\bm\mu^0)\ne\nabla\widehat P_\beta(\bm\mu^0)\).
\Halmos
\end{proof}

\begin{proof}{Proof of \cref{thm:nondiff-countable-manifold-cover}}
	For every point \(\bar{\bm\mu}\in\Nondiff(\PoA),\)
	apply \cref{lem:proof-local-extension-branches}. We obtain an open
	neighborhood \(W_{\bar{\bm\mu}}\subset U^\circ\), a finite family
	\(\mathcal A(\bar{\bm\mu})=\{\alpha\subseteq\mathcal P:\ \bar{\bm\mu}\in\operatorname{cl}_{U^\circ}(\Omega_\alpha)\}\),
	and \(C^1\) local branches
	\(\widehat P_\alpha^{\,\bar{\bm\mu}}:W_{\bar{\bm\mu}}\to\mathbb R\) for \(\alpha\in\mathcal A(\bar{\bm\mu})\),
	such that \(\widehat P_\alpha^{\,\bar{\bm\mu}}(\bm\mu)=\PoA(\bm\mu)\) for all \(\bm\mu\in W_{\bar{\bm\mu}}\cap\Omega_\alpha\),
	and \(W_{\bar{\bm\mu}}\cap\Omega_\gamma=\varnothing\) for all \(\gamma\notin\mathcal A(\bar{\bm\mu})\).

	We claim that \(W_{\bar{\bm\mu}}\cap\Nondiff(\PoA)\subseteq\bigcup_{\alpha,\beta\in\mathcal A(\bar{\bm\mu})}M_{\alpha\beta}^{\bar{\bm\mu}}\),
	where
	\[
	M_{\alpha\beta}^{\bar{\bm\mu}}
	:=
	\left\{
	\bm\mu\in W_{\bar{\bm\mu}}:
	\widehat P_\alpha^{\,\bar{\bm\mu}}(\bm\mu)
	=
	\widehat P_\beta^{\,\bar{\bm\mu}}(\bm\mu),
	\
	\nabla\widehat P_\alpha^{\,\bar{\bm\mu}}(\bm\mu)
	\ne
	\nabla\widehat P_\beta^{\,\bar{\bm\mu}}(\bm\mu)
	\right\}.
	\]
	Indeed, let \(\bm\nu\in W_{\bar{\bm\mu}}\cap\Nondiff(\PoA)\).
	Because \(W_{\bar{\bm\mu}}\cap\Omega_\gamma=\varnothing\) for every
	\(\gamma\notin\mathcal A(\bar{\bm\mu})\), every active-set region whose closure
	contains \(\bm\nu\) must have index in \(\mathcal A(\bar{\bm\mu})\). Thus
	\(\mathcal A(\bm\nu):=\{\alpha:\ \bm\nu\in\operatorname{cl}_{U^\circ}(\Omega_\alpha)\}\subseteq\mathcal A(\bar{\bm\mu})\).

	For every \(\alpha\in\mathcal A(\bm\nu)\), the identity
	\(\widehat P_\alpha^{\,\bar{\bm\mu}}(\bm\nu)=\PoA(\bm\nu)\)
	follows by approaching \(\bm\nu\) with points of
	\(W_{\bar{\bm\mu}}\cap\Omega_\alpha\) and passing to the limit.

	Since the family of candidate active sets is finite, and since
	\(\bm\nu\notin\operatorname{cl}_{U^\circ}(\Omega_\gamma)\)
	for every \(\gamma\notin\mathcal A(\bm\nu)\),
	there exists an open neighborhood \(N_{\bm\nu}\subset W_{\bar{\bm\mu}}\) of
	\(\bm\nu\) such that \(N_{\bm\nu}\cap\Omega_\gamma=\varnothing\) for all \(\gamma\notin\mathcal A(\bm\nu)\).
	Hence, for every \(\bm\mu\in N_{\bm\nu}\), any active-set region
	\(\Omega_{\alpha(\bm\mu)}\) containing \(\bm\mu\) must satisfy
	\(\alpha(\bm\mu)\in\mathcal A(\bm\nu)\).
	Therefore, near \(\bm\nu\), \(\PoA\) is locally selected from the finite family
	\(\{\widehat P_\alpha^{\,\bar{\bm\mu}}:\alpha\in\mathcal A(\bm\nu)\}\).

	If all gradients \(\nabla\widehat P_\alpha^{\,\bar{\bm\mu}}(\bm\nu)\), \(\alpha\in\mathcal A(\bm\nu)\),
	were equal, then the finite uniform first-order expansion of these \(C^1\) branches
	would imply that \(\PoA\) is differentiable at \(\bm\nu\), contradicting \(\bm\nu\in\Nondiff(\PoA).\)
	Therefore there exist \(\alpha,\beta\in\mathcal A(\bm\nu)\subseteq\mathcal A(\bar{\bm\mu})\)
	such that \(\widehat P_\alpha^{\,\bar{\bm\mu}}(\bm\nu)=\widehat P_\beta^{\,\bar{\bm\mu}}(\bm\nu)=\PoA(\bm\nu)\)
	and \(\nabla\widehat P_\alpha^{\,\bar{\bm\mu}}(\bm\nu)\ne\nabla\widehat P_\beta^{\,\bar{\bm\mu}}(\bm\nu)\).
	Thus \(\bm\nu\in M_{\alpha\beta}^{\bar{\bm\mu}}\),
	which proves the claim.

	For each pair \((\alpha,\beta)\), define
	\(F_{\alpha\beta}^{\bar{\bm\mu}}:=\widehat P_\alpha^{\,\bar{\bm\mu}}-\widehat P_\beta^{\,\bar{\bm\mu}}\).
	Then
	\(M_{\alpha\beta}^{\bar{\bm\mu}}=\{\bm\mu\in W_{\bar{\bm\mu}}:\ F_{\alpha\beta}^{\bar{\bm\mu}}(\bm\mu)=0,\ \nabla F_{\alpha\beta}^{\bar{\bm\mu}}(\bm\mu)\ne0\}\).
	By the implicit function theorem, every nonempty \(M_{\alpha\beta}^{\bar{\bm\mu}}\) is a local \((n-1)\)-dimensional \(C^1\)
	submanifold of \(U^\circ\).

	The neighborhoods
	\(\{W_{\bar{\bm\mu}}:\ \bar{\bm\mu}\in\Nondiff(\PoA)\}\)
	form an open cover of \(\Nondiff(\PoA)\). Since \(U^\circ\subset\mathbb R^n\) is
	second countable, this cover admits a countable subcover\(\{W_j\}_{j=1}^\infty\). 
	Each \(W_j\) is one of the neighborhoods constructed above and carries only finitely
	many local branches. Therefore the collection of all corresponding nonempty regular
	coincidence sets
	\[
	M_{j,\alpha\beta}
	:=
	\left\{
	\bm\mu\in W_j:
	\widehat P_{j,\alpha}(\bm\mu)
	=
	\widehat P_{j,\beta}(\bm\mu),
	\
	\nabla\widehat P_{j,\alpha}(\bm\mu)
	\ne
	\nabla\widehat P_{j,\beta}(\bm\mu)
	\right\}
	\]
	is at most countable.

	By the local containment claim, these sets cover all nondifferentiability points:
	\(\Nondiff(\PoA)\subseteq\bigcup_{j=1}^\infty\bigcup_{\alpha,\beta}M_{j,\alpha\beta}\).
	Relabeling this countable family as \(M_1,M_2,\ldots,\) we obtain
	\(\Nondiff(\PoA)\subseteq\bigcup_{\ell=1}^\infty M_\ell\),
	where each \(M_\ell\) is a local \((n-1)\)-dimensional \(C^1\) submanifold of
	\(U^\circ\).\Halmos
\end{proof}

\section{Proofs for Section \ref{sec:higher-order}}
\label{app:sec4-proofs}

\begin{proof}{Proof of \cref{thm:Cr_regularity_UEcost}}
	Since \(\bm\mu^0\in\operatorname{int}(\Omega_\alpha)\),
	there exists an open neighborhood \(N\subset U^\circ\) of \(\bm\mu^0\) such that \(N\subset\Omega_\alpha.\)
	Choose a Beckmann minimizer \(f_\alpha^0\) at \(\bm\mu^0\) whose positive-flow
	support is exactly \(\alpha\). Then \(f_\alpha^0>0\) and \(R_\alpha f_\alpha^0=\bm\mu^0\).

	Because every OD pair has at least one path in \(\alpha\), the matrix \(R_\alpha\) has
	full row rank. Choose matrices \(S_\alpha\) and \(T_\alpha\) such that
	\(R_\alpha S_\alpha=I_n\) and \(\operatorname{Im}(T_\alpha)=\ker R_\alpha\).
	Then every restricted feasible flow can be written as \(f_\alpha=T_\alpha y+S_\alpha\bm\mu.\)
	Let \(y^0\) be such that \(f_\alpha^0=T_\alpha y^0+S_\alpha\bm\mu^0\).

	Let \(E_+:=\{e\in E:\ c'_e(t)>0\ \text{for all }t\ge0\}\) and \(E_0:=\{e\in E:\ c_e \ \text{is constant}\}\).
	By \cref{ass:positive-constant-costs}, \(E=E_+\cup E_0\). Let
	\(C_{\alpha,+}\) denote the submatrix of \(C_\alpha\) formed by the rows indexed by
	\(E_+\).

	Define \(K:=\ker(C_{\alpha,+}T_\alpha)\) and \(M:=K^\perp\).
	Let \(U\) be a matrix whose columns form an orthonormal basis of \(M\). Decompose \(y^0=Uz^0+w^0\) with \(w^0\in K.\)

	For the restricted Beckmann objective, the first-order condition in the free variable
	\(y\) is
	\[
	L(y,\bm\mu)
	:=
	T_\alpha^\top C_\alpha^\top
	\bm c\bigl(C_\alpha(T_\alpha y+S_\alpha\bm\mu)\bigr)
	=
	0.
	\]
	Here \(\bm c(x):=(c_e(x_e))_{e\in E}.\)
	For every \(v\in K\), the positive-slope edge load variation satisfies \(C_{\alpha,+}T_\alpha v=0.\)
	The constant-cost edges contribute only constant terms to \(L\). Since \(L(y^0,\bm\mu^0)=0,\)
	it follows that \(v^\top L(y,\bm\mu)=0\) for all \(v\in K\),
	and hence \(L(y,\bm\mu)\in K^\perp=M.\)
	Therefore \(L(y,\bm\mu)=0\) is equivalent, after writing \(y=Uz+w^0\), to
	\[
	G(z,\bm\mu):=U^\top L(Uz+w^0,\bm\mu)=0.
	\]
	
	Note that \(G\) is well defined in a neighborhood of \((z^0,\bm\mu^0)\): a row of
	\(C_\alpha\) indexed by an edge \(e\) is either identically zero, in which case the
	load of \(e\) is identically zero, or nonzero, in which case
	\((C_\alpha f_\alpha^0)_e>0\) since \(f_\alpha^0>0\), so the corresponding load
	remains positive nearby. Since each \(c_e\in C^r([0,+\infty))\), the map \(G\) is
	\(C^r\) in a neighborhood of \((z^0,\bm\mu^0)\). Its derivative with respect to \(z\) is
	\[
	D_zG(z,\bm\mu)
	=
	U^\top T_\alpha^\top C_{\alpha,+}^\top
	D_+(z,\bm\mu)
	C_{\alpha,+}T_\alpha U,
	\]
	where
	\(D_+(z,\bm\mu)=\operatorname{diag}\bigl(c'_e\bigl((C_\alpha(T_\alpha(Uz+w^0)+S_\alpha\bm\mu))_e\bigr)\bigr)_{e\in E_+}\).
	By \cref{ass:positive-constant-costs}, \(D_+(z^0,\bm\mu^0)\)
	is positive definite. Thus, for every \(\xi\ne0\),
	\[
	\xi^\top D_zG(z^0,\bm\mu^0)\xi
	=
	\bigl(C_{\alpha,+}T_\alpha U\xi\bigr)^\top
	D_+(z^0,\bm\mu^0)
	\bigl(C_{\alpha,+}T_\alpha U\xi\bigr).
	\]
	Since \(U\xi\in K^\perp\) and \(U\xi\ne0\), one has \(C_{\alpha,+}T_\alpha U\xi\ne0.\)
	Therefore \(\xi^\top D_zG(z^0,\bm\mu^0)\xi>0.\)
	Hence \(D_zG(z^0,\bm\mu^0)\) is nonsingular.

	By the implicit function theorem, there exist a neighborhood
	\(V\subset N\) of \(\bm\mu^0\) and a \(C^r\) map \(z^*:V\to\mathbb R^{\dim M}\)
	such that \(z^*(\bm\mu^0)=z^0\) and \(G(z^*(\bm\mu),\bm\mu)=0\) for all \(\bm\mu\in V\).
	Define \(y^*(\bm\mu):=Uz^*(\bm\mu)+w^0,\)
	and \(f_\alpha^*(\bm\mu):=T_\alpha y^*(\bm\mu)+S_\alpha\bm\mu\).
	Then \(f_\alpha^*\in C^r(V)\) and \(R_\alpha f_\alpha^*(\bm\mu)=\bm\mu\).
	Since \(f_\alpha^0>0\), after shrinking \(V\) if necessary,
	\(f_\alpha^*(\bm\mu)>0\) for all \(\bm\mu\in V\).
	
	The equation \(L(y^*(\bm\mu),\bm\mu)=0\) is precisely the first-order condition for
	the restricted Beckmann problem on the affine space \(R_\alpha f_\alpha=\bm\mu.\)
	Because the restricted Beckmann objective is convex, this first-order condition is
	sufficient for optimality. Since \(V\subset\Omega_\alpha\), the restricted minimizer
	coincides with a Beckmann minimizer of the original problem with active path set
	\(\alpha\).
	
	The induced edge-load vector is \(x^*(\bm\mu):=C_\alpha f_\alpha^*(\bm\mu)\).
	Since \(C_\alpha\) is linear and \(f_\alpha^*\in C^r(V)\), \(x^*\in C^r(V).\)
	For every edge \(e\), \(\tau_e(\bm\mu)=c_e(x_e^*(\bm\mu))\).
	Because \(c_e\in C^r\), it follows that \(\tau_e\in C^r(V).\)
	Thus \(\bm\tau\in C^r(V).\)

	For each OD pair \(i\), choose any active path \(p_i\in\alpha_i\). Along the active
	branch, all active paths for OD pair \(i\) have equal equilibrium cost, so
	\(\lambda_i(\bm\mu)=\sum_{e\in p_i}\tau_e(\bm\mu)\).
	This is a finite sum of \(C^r\) functions; hence
	\(\lambda_i\in C^r(V)\) for \(i=1,\ldots,n\).
	Therefore \(\bm\lambda\in C^r(V).\)

	Finally, \(C^{\UE}(\bm\mu)=\bm\mu^\top\bm\lambda(\bm\mu)\).
	Since \(\bm\mu\mapsto\bm\mu\) is smooth and \(\bm\lambda\in C^r(V)\), we obtain \(C^{\UE}\in C^r(V).\)
	Thus \(\bm\tau(\bm\mu)\in C^r(V)\), \(\bm\lambda(\bm\mu)\in C^r(V)\), and \(C^{\UE}(\bm\mu)\in C^r(V)\).
\Halmos
\end{proof}

\section{Proofs for Section \ref{sec:affine}}
\label{app:sec5-proofs}

\begin{proof}{Proof of \cref{prop:affine-exceptional-polyhedral}}
	Assume that every edge cost is affine: \(c_e(x)=a_ex+b_e\) with \(a_e,b_e\ge0\).
	Let \(A:=\operatorname{diag}(a_e)_{e\in E}\) and \(b:=(b_e)_{e\in E}\).
	Fix an active path set \(\alpha\). After deleting the inactive paths, the Wardrop
	conditions \eqref{eq:wardrop-equal}--\eqref{eq:wardrop-ineq} for the retained paths are
	\[
	C_\alpha^\top(AC_\alpha f_\alpha+b)=R_\alpha^\top\lambda,
	\qquad
	R_\alpha f_\alpha=\bm\mu,
	\]
	together with \(f_\alpha>0\)
	and the inactive-path inequalities
	\(\ell_q^\alpha(f_\alpha)\ge \lambda_i\) for all \(q\in\mathcal P_i\setminus\alpha_i\).
	Because the edge costs are affine, every path cost is affine in \(f_\alpha\). Therefore
	all equalities and inequalities appearing above are affine in \((f_\alpha,\lambda,\bm\mu).\)
	Hence the set of triples \((f_\alpha,\lambda,\bm\mu)\) satisfying the fixed-active-set
	Wardrop system is defined by finitely many affine equalities and (strict as well as
	non-strict) affine inequalities, and is therefore a finite union of polyhedral
	cells. Its projection onto
	the \(\bm\mu\)-coordinates is again a finite union of polyhedral cells, by
	Fourier--Motzkin elimination. Consequently,
	each \(\Omega_\alpha\) is a finite union of polyhedral cells.
	
	Since there are only finitely many active path sets \(\alpha\subseteq\mathcal P\), the
	regular region \(\mathcal O^{\UE}=\bigcup_\alpha \operatorname{int}(\Omega_\alpha)\)
	is a finite union of full-dimensional polyhedral cells. Its complement in \(U^\circ\),
	\(\mathcal E^{\UE}=U^\circ\setminus\mathcal O^{\UE}\),
	is contained in the union of the proper faces of the finitely many polyhedral cells
	appearing in the above finite decomposition: indeed, if
	\(\bm\mu\in\Omega_\alpha\setminus\operatorname{int}(\Omega_\alpha)\) lies in a cell
	\(P\) of the decomposition of \(\Omega_\alpha\), then \(P\) cannot be full-dimensional
	with \(\bm\mu\) in its interior, for otherwise a whole neighborhood of \(\bm\mu\)
	would lie in \(\Omega_\alpha\); hence \(\bm\mu\) lies in a proper face of \(P\) or in
	a lower-dimensional cell. Every proper face has dimension at most
	\(n-1\). Therefore \(\mathcal E^{\UE}\)
	is a finite union of polyhedral cells of dimension at most \(n-1\).
	
	The same argument applies to the transformed network. Indeed, the transformed edge
	costs are \(\bar c_e(x)=c_e(x)+xc'_e(x)=2a_ex+b_e\),
	which are again affine. Thus \(\mathcal E^{\SO}\)
	is also a finite union of polyhedral cells of dimension at most \(n-1\).

	It remains to prove the scaling relation. Let \(\widetilde\Omega_\alpha\) denote the
	active-set region of the transformed network. For any feasible transformed-network
	flow \(f\in\mathcal F(\bm\mu)\), the scaled flow \(2f\) is feasible for the original
	network under demand \(2\bm\mu\). Moreover,
	\[
	c_e\bigl((C(2f))_e\bigr)
	=
	a_e\cdot 2(Cf)_e+b_e
	=
	\bar c_e\bigl((Cf)_e\bigr).
	\]
	Thus every path cost in the original network at the scaled flow \(2f\) and demand
	\(2\bm\mu\) is exactly equal to the corresponding transformed-network path cost at
	\(f\) and demand \(\bm\mu\). Therefore the Wardrop equal-cost and inactive-path
	inequality conditions are identical under the correspondence
	\(f\longleftrightarrow 2f\), \(\bm\mu\longleftrightarrow 2\bm\mu\).
	Hence, for every active path set \(\alpha\),
	\(\bm\mu\in\widetilde\Omega_\alpha\) if and only if \(2\bm\mu\in\Omega_\alpha\).
	Equivalently, \(\widetilde\Omega_\alpha=\frac12\Omega_\alpha\).
	Taking interiors and then finite unions gives
	\(\mathcal O^{\SO}=\frac12\mathcal O^{\UE}\).
	Taking complements in \(U^\circ\) yields
	\(\mathcal E^{\SO}=\frac12\mathcal E^{\UE}\).
	Therefore \(\mathcal E^{\UE}=2\mathcal E^{\SO}:=\{2\bm\mu:\ \bm\mu\in\mathcal E^{\SO}\}\).
	This completes the proof.\Halmos
\end{proof}

\begin{proof}{Proof of \cref{thm:poa-cell-boundary-maximum}}
	Write
	\[
	A:=\operatorname{diag}(a_e)_{e\in\mathcal E},
	\qquad
	\bm b:=(b_e)_{e\in\mathcal E},
	\qquad
	H:=C^\top AC,
	\qquad
	\bm d:=C^\top\bm b,
	\]
	so that \(\Cost(f)=f^\top Hf+\bm d^\top f\) for every path flow \(f\). We first bound the gap \(C^{\UE}-C^{\SO}\) uniformly in the demand, then show that \(\PoA\) attains its maximum on \(\overline K\), and finally show that every interior point of \(K\) is dominated by a boundary point.

	For the first step, fix a demand \(\bm\mu\) and, for \(\tau\in[1/2,1]\), let
	\[
	f_\tau\in\operatorname*{arg\,min}_{f\in\mathcal F(\bm\mu)}
	\Bigl\{\frac12f^\top Hf+\tau\bm d^\top f\Bigr\},
	\qquad
	T_{\bm\mu}(\tau):=\Cost(f_\tau).
	\]
	Then \(f_{1/2}\) is a system-optimal flow and \(f_1\) is a Wardrop equilibrium flow. The induced total cost is independent of the chosen minimizer: two minimizers \(f,g\) for the same \(\tau\) satisfy \(H(f-g)=0\) by convexity, hence \(f^\top Hf=g^\top Hg\), and equality of the objective values with \(\tau>0\) gives \(\bm d^\top f=\bm d^\top g\). The usual subsequence argument on the compact, \(\tau\)-independent set \(\mathcal F(\bm\mu)\) then shows that \(T_{\bm\mu}\) is well defined and continuous.

	For a fixed support \(J\), the restricted KKT conditions read \(H_Jf+\tau\bm d_J=R_J^\top\bm\lambda\) and \(R_Jf=\bm\mu\) with \(f>\bmzero\), together with the KKT inequalities for the paths outside \(J\); here \(H_J,\bm d_J,R_J\) denote the restrictions to \(J\). All these conditions are linear in \((\tau,f,\bm\lambda)\), so, by affine interpolation of solutions, the set of parameters \(\tau\) for which a minimizer with support exactly \(J\) exists is convex, hence an interval. Since there are only finitely many supports, \([1/2,1]\) splits into finitely many subintervals, on the interior of each of which a single support can be used. On such a subinterval, affine interpolation of minimizers and multipliers preserves the KKT conditions, so an affine minimizer can be selected locally; writing its derivative as \(v\) and differentiating the restricted KKT equalities gives \(R_Jv=\bmzero\) and \(H_Jv+\bm d_J=R_J^\top\bm\lambda'\). With \(Z_J\) an orthonormal basis of \(\ker R_J\), \(M_J:=Z_J^\top H_JZ_J\), and \(\bm g_J:=Z_J^\top\bm d_J\), writing \(v=Z_Jw\) yields \(M_Jw=-\bm g_J\); hence
	\[
	\bm d_J^\top v=-\kappa_J,
	\qquad
	\kappa_J:=\bm g_J^\top M_J^\dagger\bm g_J\ge0,
	\]
	where \(M_J^\dagger\) is the Moore--Penrose inverse, with the convention \(\kappa_J=0\) when \(\ker R_J=\{\bmzero\}\). Taking the inner product of the restricted KKT equality with \(v\) gives \(v^\top H_Jf+\tau\bm d_J^\top v=0\), whence, on the subinterval,
	\[
	T_{\bm\mu}'(\tau)
	=
	2v^\top H_Jf+\bm d_J^\top v
	=
	(1-2\tau)\bm d_J^\top v
	=
	(2\tau-1)\kappa_J .
	\]
	The constant \(\kappa_J\) depends only on the network, the cost coefficients, and the support \(J\), not on the demand; let \(\kappa_*\) denote the maximum of \(\kappa_J\) over the finitely many supports. Since \(T_{\bm\mu}\) is continuous and piecewise \(C^1\), piecewise integration gives
	\[
	0\le C^{\UE}(\bm\mu)-C^{\SO}(\bm\mu)
	=
	T_{\bm\mu}(1)-T_{\bm\mu}(1/2)
	\le
	\kappa_*\int_{1/2}^1(2\tau-1)\,d\tau
	=
	\frac{\kappa_*}{4}.
	\]
	Thus there exists a constant \(\Delta<\infty\), independent of the demand, such that
	\begin{equation}
	0\le C^{\UE}(\bm\mu)-C^{\SO}(\bm\mu)\le\Delta .
	\label{eq:uniform-ue-so-gap}
	\end{equation}

	For the second step, delete every OD pair admitting a zero-cost path, i.e., one on which \(a_e=b_e=0\) for all edges: routing the demand of such an OD pair on its zero-cost path changes no edge cost, so, writing \(\pi\) for the projection onto the retained demand coordinates and hats for the reduced problem,
	\[
	C^{\UE}(\bm\mu)=\widehat C^{\UE}(\pi\bm\mu),
	\qquad
	C^{\SO}(\bm\mu)=\widehat C^{\SO}(\pi\bm\mu),
	\]
	where the UE identity uses the value-uniqueness of the equilibrium total cost, and the SO identity uses that \(x\mapsto xc_e(x)\) is nondecreasing. Since \(C^{\SO}>0\) on \(\overline K\), at least one coordinate is retained. Each retained OD pair \(i\) satisfies \(a_p+b_p>0\) on every path \(p\in\mathcal P_i\); writing \(k_i:=|\mathcal P_i|\), \(a_p:=\sum_{e\in p}a_e\), and \(b_p:=\sum_{e\in p}b_e\), at demand \(\nu_i\) some path \(p\in\mathcal P_i\) carries at least \(\nu_i/k_i\), whence
	\(\widehat C^{\SO}(\bm\nu)\ge\min_{p\in\mathcal P_i}\{a_p(\nu_i/k_i)^2+b_p\nu_i/k_i\}\to\infty\) as \(\nu_i\to\infty\),
	and therefore \(\widehat C^{\SO}(\bm\nu)\to\infty\) as \(\|\bm\nu\|\to\infty\). By \eqref{eq:uniform-ue-so-gap},
	\(\widehat\PoA(\bm\nu)\le1+\Delta/\widehat C^{\SO}(\bm\nu)\), so \(\widehat\PoA(\bm\nu)\to1\) as \(\|\bm\nu\|\to\infty\). The function \(\widehat\PoA\) is continuous on the closed polyhedron \(P:=\pi(\overline K)\). If \(\widehat\PoA\equiv1\) on \(P\), its maximum is trivially attained; otherwise, for any \(\bm\nu^0\in P\) with \(\widehat\PoA(\bm\nu^0)>1\), the superlevel set \(\{\bm\nu\in P:\widehat\PoA(\bm\nu)\ge\widehat\PoA(\bm\nu^0)\}\) is bounded by the limit above, and the maximum is attained on \(P\). Since \(P=\pi(\overline K)\), the original \(\PoA\) attains its maximum on \(\overline K\) as well.

	For the last step, fix \(\bm q\in K\) and write \(Q(f):=f^\top Hf\) and \(L(f):=\bm d^\top f\). Since \(\mathcal F(t\bm q)=t\mathcal F(\bm q)\) for \(t>0\),
	\[
	G_{\bm q}(t):=\frac{C^{\SO}(t\bm q)}{t}=\min_{f\in\mathcal F(\bm q)}\{tQ(f)+L(f)\}
	\]
	is concave in \(t\), as the pointwise infimum of affine functions. On \(I_{\bm q}:=\{t>0:t\bm q\in K\}\), the function
	\[
	H_{\bm q}(t):=\frac{C^{\UE}(t\bm q)}{t}=\bm q^\top\bm\lambda(t\bm q)
	\]
	is affine: indeed, interpolating affinely between two Wardrop equilibrium flows with support \(\alpha\) at demands \(t_1\bm q\) and \(t_2\bm q\) preserves the Wardrop conditions, since the path costs are affine in the flow, and the interpolated flow is a Wardrop equilibrium at the interpolated demand \(t_\theta\bm q:=(1-\theta)t_1\bm q+\theta t_2\bm q\); by the value-uniqueness of the equilibrium cost vector, \(\bm\lambda(t_\theta\bm q)=(1-\theta)\bm\lambda(t_1\bm q)+\theta\bm\lambda(t_2\bm q)\). With \(M:=\max\{\PoA(t_1\bm q),\PoA(t_2\bm q)\}\ge0\), the concavity of \(G_{\bm q}\) then gives
	\[
	H_{\bm q}(t_\theta)
	=
	(1-\theta)H_{\bm q}(t_1)+\theta H_{\bm q}(t_2)
	\le
	M\bigl((1-\theta)G_{\bm q}(t_1)+\theta G_{\bm q}(t_2)\bigr)
	\le
	M G_{\bm q}(t_\theta),
	\]
	and dividing by the positive \(G_{\bm q}(t_\theta)\) yields
	\begin{equation}
	\PoA(t_\theta\bm q)\le\max\{\PoA(t_1\bm q),\PoA(t_2\bm q)\}.
	\label{eq:cell-poa-radial-inequality}
	\end{equation}
	Since \(K\) is open and convex, \(I_{\bm q}=(t_-,t_+)\) with \(0\le t_-<1<t_+\le\infty\); moreover \(t_->0\), for \(t_-=0\) would give \(\bmzero\in\overline K\) and \(C^{\SO}(\bmzero)=0\), contradicting the assumption. If \(t_+<\infty\), then \(t_-\bm q,t_+\bm q\in\partial K\), and \eqref{eq:cell-poa-radial-inequality} with \(t_\theta=1\), together with the continuity of \(\PoA\) on \(\overline K\), gives \(\PoA(\bm q)\le\max\{\PoA(t_-\bm q),\PoA(t_+\bm q)\}\). If \(t_+=\infty\), then \(\PoA(t\bm q)\to1\) as \(t\to\infty\) by the second step, and the same limiting procedure gives \(\PoA(\bm q)\le\max\{\PoA(t_-\bm q),1\}=\PoA(t_-\bm q)\). In either case, \(\bm q\) is dominated by a point of \(\partial K\); applied to a maximizer of \(\PoA\) on \(\overline K\), which exists by the second step, this yields a maximizer on \(\partial K\).\Halmos
\end{proof}

\bibliographystyle{informs2014}
\bibliography{references}

\end{document}